\begin{document}
\numberwithin{equation}{section}

\def\1#1{\overline{#1}}
\def\2#1{\widetilde{#1}}
\def\3#1{\widehat{#1}}
\def\4#1{\mathbb{#1}}
\def\5#1{\frak{#1}}
\def\6#1{{\mathcal{#1}}}

\newcommand{\de}{\partial}
\newcommand{\R}{\mathbb R}
\newcommand{\al}{\alpha}
\newcommand{\tr}{\widetilde{\rho}}
\newcommand{\tz}{\widetilde{\zeta}}
\newcommand{\tv}{\widetilde{\varphi}}
\newcommand{\tO}{\widetilde{\Omega}}
\newcommand{\hv}{\hat{\varphi}}
\newcommand{\tu}{\tilde{u}}
\newcommand{\usc}{{\sf usc}}
\newcommand{\tF}{\tilde{F}}
\newcommand{\debar}{\overline{\de}}
\newcommand{\Z}{\mathbb Z}
\newcommand{\C}{\mathbb C}
\newcommand{\Po}{\mathbb P}
\newcommand{\zbar}{\overline{z}}
\newcommand{\G}{\mathcal{G}}
\newcommand{\So}{\mathcal{U}}
\newcommand{\Ko}{\mathcal{K}}
\newcommand{\U}{\mathcal{U}}
\newcommand{\B}{\mathbb B}
\newcommand{\oB}{\overline{\mathbb B}}
\newcommand{\Cur}{\mathcal D}
\newcommand{\Dis}{\mathcal Dis}
\newcommand{\Levi}{\mathcal L}
\newcommand{\SP}{\mathcal SP}
\newcommand{\Sp}{\mathcal Q}
\newcommand{\Ma}{\mathcal M}
\newcommand{\Co}{\mathcal C}
\newcommand{\Hol}{{\sf Hol}(\mathbb H, \mathbb C)}
\newcommand{\Aut}{{\sf Aut}(\mathbb D)}
\newcommand{\D}{\mathbb D}
\newcommand{\oD}{\overline{\mathbb D}}
\newcommand{\oX}{\overline{X}}
\newcommand{\loc}{L^1_{\rm{loc}}}
\newcommand{\loci}{L^\infty_{\rm{loc}}}
\newcommand{\la}{\langle}
\newcommand{\ra}{\rangle}
\newcommand{\thh}{\tilde{h}}
\newcommand{\N}{\mathbb N}
\newcommand{\kd}{\kappa_D}
\newcommand{\Hr}{\mathbb H}
\newcommand{\ps}{{\sf Psh}}
\newcommand{\tg}{\widetilde{\gamma}}

\newcommand{\subh}{{\sf subh}}
\newcommand{\harm}{{\sf harm}}
\newcommand{\ph}{{\sf Ph}}
\newcommand{\tl}{\tilde{\lambda}}
\newcommand{\ts}{\tilde{\sigma}}

\def\v{\varphi}
\def\Re{{\sf Re}\,}
\def\Im{{\sf Im}\,}

\def\e{{\sf e}}

\def\dist{{\rm dist}}
\def\const{{\rm const}}
\def\rk{{\rm rank\,}}
\def\id{{\sf id}}
\def\aut{{\sf aut}}
\def\Aut{{\sf Aut}}
\def\CR{{\rm CR}}
\def\GL{{\sf GL}}
\def\U{{\sf U}}

\def\la{\langle}
\def\ra{\rangle}
\def\Ha{\mathbb H}

\newtheorem{theorem}{Theorem}[section]
\newtheorem{lemma}[theorem]{Lemma}
\newtheorem{proposition}[theorem]{Proposition}
\newtheorem{corollary}[theorem]{Corollary}

\theoremstyle{definition}
\newtheorem{definition}[theorem]{Definition}
\newtheorem{defn}[theorem]{Definition}
\newtheorem{example}[theorem]{Example}

\theoremstyle{remark}
\newtheorem{remark}[theorem]{Remark}
\newtheorem{rem}[theorem]{Remark}
\numberwithin{equation}{section}

\definecolor{OrangeRed}{cmyk}{0,0.6,1,0}   
\definecolor{DarkBlue}{cmyk}{1,1,0,0.20}
\definecolor{DarkGreen}{cmyk}{1,0,0.6,0.2}
\definecolor{myblue}{rgb}{0.66,0.78,1.00}
\definecolor{Violet}{cmyk}{0.79,0.88,0,0}
\definecolor{Lavender}{cmyk}{0,0.48,0,0}
\newcommand{\violet }{\color{Violet}}
\newcommand{\lavender }{\color{Lavender}}
\renewcommand{\green}{\color{DarkGreen}}
\renewcommand{\red}{\color{OrangeRed}}

\def\re{{\sf Re}\,}
\def\im{{\sf Im}\,}
\renewcommand{\ra}{\rightarrow}
\newcommand{\ov}{\overline}
\newcommand{\NN}{\mathcal{N}}
\renewcommand{\SS}{\mathcal{S}}
\renewcommand{\H}{\mathbb{H}}
\renewcommand{\emptyset}{\varnothing}

\makeatletter
\@namedef{subjclassname@2020}{\textup{2020} Mathematics Subject Classification}
\makeatother

\title[Denjoy-Wolff Theorem]{The Denjoy-Wolff Theorem in simply connected domains}
\author[A. M. Benini]{Anna Miriam Benini$^{\dag}$}
\author[F. Bracci]{Filippo Bracci$^{\dag\dag}$}

\address{A. M. Benini: Dipartimento di Scienze Matematiche, Fisiche e Informatiche\\ Parco Area delle Scienze 53/A (Sciences and Technology Campus) \\ 43124 Parma, Italy } \email{annamiriam.benini@unipr.it		}

\address{F. Bracci: Dipartimento Di Matematica\\
Universit\`{a} di Roma \textquotedblleft Tor Vergata\textquotedblright\ \\
Via Della Ricerca Scientifica 1\\ 00133 
Roma, Italy} \email{fbracci@mat.uniroma2.it}

\subjclass[2020]{30D05; 30D40; 30C35}
\keywords{Denjoy-Wolff theorem; iteration; prime ends; geometric function theory; boundary behavior of conformal maps}

\thanks{$\dag$ Partially supported  by the French Italian University and Campus France through the Galileo program, under the project {\sl From rational to transcendental: complex dynamics and parameter spaces and by GNAMPA, INdAM}.\\
$\dag\dag$ Partially supported by  the MIUR Excellence Department Project 2023-2027 MatMod@Tov awarded to the
Department of Mathematics, University of Rome Tor Vergata. \\
Both authors are also partially supported by PRIN {\sl Real and Complex
Manifolds: Topology, Geometry and holomorphic dynamics} n.2017JZ2SW5 and by GNSAGA of INdAM.}

\begin{abstract}
We characterize the simply connected domains $\Omega\subsetneq\C$ that exhibit the Denjoy-Wolff Property, meaning that every holomorphic self-map of $\Omega$ without fixed points has a Denjoy-Wolff point. We demonstrate that this property holds if and only if every automorphism of $\Omega$ without fixed points in $\Omega$ has a Denjoy-Wolff point. Furthermore, we establish that the Denjoy-Wolff Property is equivalent to the existence of what we term an ``$H$-limit''  at each boundary point for  a Riemann map associated with the domain. The $H$-limit condition is stronger than the existence of non-tangential limits but weaker than unrestricted limits.

As an additional result of our work, we prove that there exist bounded simply connected domains where the Denjoy-Wolff Property holds but which are not visible in the sense of Bharali and Zimmer. Since visibility is a sufficient condition for the Denjoy-Wolff Property, this proves that in general it is not necessary.
\end{abstract}

\maketitle

\tableofcontents

\section{Introduction and statement of the results}

Let $\D$ be the unit disc. A celebrated theorem of Denjoy-Wolff \cite{Denjoy,Wolff1, Wolff2} states that
\begin{theorem}[Denjoy-Wolff] Let $f:\D\ra\D$ be holomorphic without fixed points in $\D$. Then, there exists a (unique) point $\tau\in\partial\D$, called {\sl Denjoy-Wolff point}, such that the sequence of iterates $\{f^{\circ n}\}$ converges to $\tau$ uniformly on compacta.
\end{theorem}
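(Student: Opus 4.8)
This is a classical result; the plan is to split the argument according to whether $f$ is an automorphism of $\D$ or not.

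\textbf{Automorphisms.} If $f\in\Aut(\D)$ has no fixed point in $\D$, I would invoke the classification of M\"obius self-maps of $\D$: extending $f$ to $\widehat\C$ and counting fixed points with multiplicity (the total being $2$), a fixed-point-free $f$ is either parabolic (one boundary fixed point of multiplicity two) or hyperbolic (two distinct boundary fixed points). Conjugating by a Cayley map onto a half-plane model $\H$ turns $f$ into $z\mapsto z+b$ with $b\in\R\setminus\{0\}$, respectively $z\mapsto\lambda z$ with $\lambda\in(0,\infty)\setminus\{1\}$; in both models the iterates converge, uniformly on compacta, to the attracting boundary fixed point, which is the sought $\tau$. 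Uniqueness is immediate since $\tau$ is the common limit of all orbits.

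\textbf{Wolff's Lemma.} Assume now $f$ is not an automorphism. The core step is to produce $\tau\in\partial\D$ such that every horocycle $E(\tau,R):=\{z\in\D:|1-\ov\tau z|^2<R(1-|z|^2)\}$, $R>0$, satisfies $f(E(\tau,R))\subseteq E(\tau,R)$. For $r\in(0,1)$ put $f_r:=rf$; its image is relatively compact in $\D$, so by Brouwer's theorem $f_r$ has a fixed point $p_r\in\D$, which is unique by the equality case of the Schwarz--Pick lemma (two fixed points would force $f_r\in\Aut(\D)$, impossible since $f_r(\D)\neq\D$). Pick $r_n\uparrow1$ with $p_{r_n}\to\tau\in\oD$. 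From $f(p_{r_n})=p_{r_n}/r_n$ we get $\tau\notin\D$ (otherwise $f(\tau)=\tau$), hence $\tau\in\partial\D$; moreover $|p_{r_n}|<r_n$ gives
\[
\frac{1-|f(p_{r_n})|}{1-|p_{r_n}|}=\frac{r_n-|p_{r_n}|}{r_n(1-|p_{r_n}|)}<\frac1{r_n}\longrightarrow1,\qquad f(p_{r_n})\to\tau .
\]
Feeding the sequence $\{p_{r_n}\}$ into Julia's Lemma then yields precisely $f(E(\tau,R))\subseteq E(\tau,R)$ for all $R>0$.

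\textbf{Escape to the boundary and convergence.} I would then show $|f^{\circ n}(z)|\to1$ for every $z\in\D$. The family $\{f^{\circ n}\}$ is normal (Montel), so a subsequential limit $g:\D\to\oD$ is holomorphic, hence constant or open; if $g\equiv c\in\D$, then $f^{\circ(n_k+1)}\to c$ and also $f^{\circ(n_k+1)}=f\circ f^{\circ n_k}\to f(c)$, so $f(c)=c$, impossible. If $g$ is non-constant, extract a further subsequence with $f^{\circ(n_{k+1}-n_k)}\to h$; then $f^{\circ n_{k+1}}=f^{\circ(n_{k+1}-n_k)}\circ f^{\circ n_k}$ forces $h\circ g=g$, so $h=\id$ on the open set $g(\D)$, hence $h\equiv\id$; by Hurwitz the maps $f^{\circ(n_{k+1}-n_k)}$ are eventually injective, so $f$ is injective, and since $f^{\circ m}(\D)\subseteq f(\D)$ while the images of maps converging to $\id$ eventually exhaust $\D$, we get $f(\D)=\D$, contradicting that $f$ is not an automorphism. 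Therefore $|f^{\circ n}(z)|\to1$. Finally, choosing $R_0$ with $z\in E(\tau,R_0)$, Wolff's Lemma gives $f^{\circ n}(z)\in\ov{E(\tau,R_0)}$ for all $n$, and $\tau$ is the only point of $\ov{E(\tau,R_0)}$ on $\partial\D$; combined with $|f^{\circ n}(z)|\to1$ this forces $f^{\circ n}(z)\to\tau$. Since every subsequential limit of $\{f^{\circ n}\}$ is then the constant $\tau$, normality upgrades this to locally uniform convergence, and $\tau$ is unique.

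\textbf{Main obstacle.} The delicate point is Wolff's Lemma, specifically passing from the approximate fixed points $p_{r_n}$ to genuine horocycle invariance, where the boundary form of the Schwarz--Pick lemma (Julia's Lemma) and the uniform bound on the dilation ratios are essential; second is the exclusion of non-constant subsequential limits of the iterates, which hinges on showing such a limit would make $f$ an automorphism. The rest --- normality, the Vitali-type upgrade, and the elementary geometry of horocycles --- is routine.
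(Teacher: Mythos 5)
The paper does not prove this statement---it is the classical Denjoy--Wolff theorem, quoted with citations to Denjoy and Wolff---so there is no internal proof to compare against; your argument is the standard Wolff proof (approximate fixed points $p_{r_n}$ of $rf$, Julia's Lemma giving the invariant horocycles, exclusion of interior-constant and non-constant subsequential limits, then horocycle geometry plus normality) and it is correct. The only step asserted without justification is $f^{\circ(n_k+1)}\to c$: write $f^{\circ(n_k+1)}=f^{\circ n_k}\circ f$ and use that $f^{\circ n_k}\to c$ uniformly on the compact set $f(K)\subset\D$ for each compact $K\subset\D$; with that one line added, everything is in order.
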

The Denjoy-Wolff theorem has been greatly generalized to several other contexts, in one and several variables and in Gromov hyperbolic metrix spaces, both for its intrinsic interest and several applications (see for instance \cite{AbateTaut, AbateNewbook, BCDbook, Ka} and references therein for more details). 

In this paper, we address the problem of identifying which simply connected domains satisfy a Denjoy-Wolff type theorem.

If a simply connected domain $D\subsetneq\C$ has locally connected boundary, then by Carath\'eodory's extension theorem every Riemann map $h:\D\to D$ extends continuously up to $\partial\D$. Hence, if $f:D\to D$ is holomorphic without fixed points, by the Denjoy-Wolff Theorem the iterates of $g:=h^{-1}\circ f\circ h:\D\ra \D$ converges to some point $\tau\in\partial\D$ and then the iterates of $f$ converge to $h(\tau)$. Therefore in simply connected domains with locally connected boundary, a Denjoy-Wolff type theorem holds.

On the other hand, if $D\subsetneq\C$ is any simply connected domain which  has a prime end $\underline{\xi}$ whose principal part is not a point,  $f:\D\ra\D$ is any holomorphic self-map of $\D$ with  Denjoy-Wolff  point $\tau\in\partial\D$, and $h:\D\ra D$ is a Riemann map which makes corresponding $\tau$ to the prime end $\underline{\xi}$, then  (see Section~\ref{sub:easy})  the holomorphic map $g:=h\circ f\circ h^{-1}:D\ra D$ has the property that for every $z\in D$ the cluster set of $\{g^{\circ n}(z)\}$ contains a continuum of points.  

The two previous classes of examples show that simply connected domains can or can not satisfy a Denjoy-Wolff type theorem. In this paper, we find  sharp conditions for this. In order to state our main result, we need a few definitions.

We denote by $\C_\infty$ the Riemann sphere. If $\Omega\subset \C$ is a domain, we let $\partial \Omega$  be its  boundary in $\C_\infty$ and $\overline{\Omega}$    its  closure   in $\C_\infty$---in particular, if $\Omega$ is bounded, $\partial \Omega$  is the Euclidean  boundary of $\Omega$ and $\overline{\Omega}$ its Euclidean closure. Also, we let
\begin{equation*}
\begin{split}
\hbox{Hol}_d(\Omega,\Omega)&:=\{f:\Omega\to \Omega : \hbox{ $f$ is holomorphic and without fixed points in $\Omega$}\}.\\
\hbox{Aut}_d(\Omega)&:=\{ f\in\hbox{Hol}_d(\Omega,\Omega)\hbox{: $f$ is an automorphism}\}\\
\end{split}
\end{equation*}

With these notations at hand, we give the following definition:

\begin{defn}[Denjoy-Wolff Property] 
Let $\Omega\subset \C$  be a simply connected domain. We say that  $f\in \hbox{Hol}_d(\Omega,\Omega)$  has {\sl Denjoy-Wolff point} if there exists $\tau\in\partial\Omega$ such that the sequence of iterates $\{f^{\circ n}\}$ converges to $\tau$ uniformly on compacta. The  domain $\Omega\subset \C$ has the {\sl Denjoy-Wolff Property} if every $f\in \hbox{Hol}_d(\Omega,\Omega)$ has  Denjoy-Wolff point.
\end{defn}

Now we define a notion of boundary limit for holomorphic function defined on $\D$, that we call $H$-limit.  Given $\sigma\in\partial\D$ and $R>0$  the {\sl horosphere of center $\sigma$ and radius $R$} is given by:
\begin{equation}\label{eq:horospheres in D}
E(\sigma,R):=\{z\in \D: \frac{|\sigma-z|^2}{1-|z|^2}<R\}.
\end{equation}

\begin{defn}[H-limits]\label{Def:H-limits}
Let $h:\D\to \C$ be a function. We say that $L\in \C_\infty$ is the {\sl $H$-limit} of $h$ at $\sigma\in \partial\D$---for short, $H$-$\lim_{z\to\sigma}h(z)=L$---if for every sequence $\{z_n\}$ converging to $\sigma$ such that $\{z_n\}$ is eventually contained in $E(\sigma, R)$ for some $R>0$ then $\{h(z_n)\}$ converges to $L$.
\end{defn}
In somewhat  different forms (usually considering sequences which are eventually contained in {\sl all} horospheres), or in an implicit way (especially related to the so-called Julia's Lemma) {\sl $H$-limits}  already appeared in the literature (see, {\sl e.g.}, \cite{AbateTaut} and \cite[Section 8]{BH}). 

 If a holomorphic function $h:\D\to \C$ has $H$-limit $L$ at $\sigma\in\partial\D$ then it has non-tangential limit $L$ at $\sigma$. While, if $h$ has (unrestricted) limit $L$ at $\sigma$ then $H$-$\lim_{z\to \sigma}h(z)=L$.
 
 Finally, a {\sl (Busemann) horosphere} of a simply connected domain properly contained in $\C$ is the image of a horosphere of $\D$ via a Riemann map of the domain (see Subsection~\ref{sec:hyp} for details). With these definitions at hand, we can state our main result:

\begin{theorem}\label{Thm:main Intro}
Let $\Omega\subsetneq \C$ be a simply connected domain. The following are equivalent:
\begin{enumerate}
\item $\Omega$ has the Denjoy-Wolff Property, 
\item every $g\in \hbox{Aut}_d(\Omega)$ has a Denjoy-Wolff point,
\item the closure in $\C_\infty$ of every (Busemann) horosphere of $\Omega$ intersects $\partial \Omega$ at one point,
\item every Riemann map $h:\D\to \Omega$ has $H$-limit at every $\sigma\in\partial\D$.
\end{enumerate}
In particular, if  one---and hence all---of the previous conditions holds, then the principal part of every prime end of $\Omega$ is a single point. 
\end{theorem}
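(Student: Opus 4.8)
The plan is to prove the cycle of implications $(1)\Rightarrow(2)\Rightarrow(3)\Rightarrow(4)\Rightarrow(1)$, and then deduce the final statement about principal parts of prime ends as a corollary of $(3)$ or $(4)$.

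The implication $(1)\Rightarrow(2)$ is immediate since $\mathrm{Aut}_d(\Omega)\subset\mathrm{Hol}_d(\Omega,\Omega)$. For $(2)\Rightarrow(3)$, I would argue by contraposition: suppose some Busemann horosphere $H$ of $\Omega$ has closure meeting $\partial\Omega$ in more than one point. Pulling back via a Riemann map $h:\D\to\Omega$, this says the horosphere $E(\sigma,R)\subset\D$ has an image $h(E(\sigma,R))$ whose closure touches the boundary in a nontrivial continuum; equivalently, $h$ fails to have an $H$-limit at $\sigma$ (this already foreshadows the equivalence with $(4)$). The idea is then to manufacture an automorphism of $\D$ with Denjoy-Wolff point $\sigma$ — a hyperbolic or parabolic automorphism fixing $\sigma$ — whose iterates push every compact set into smaller and smaller horospheres $E(\sigma,R_n)$ with $R_n\to 0$; transporting this automorphism to $\Omega$ via $h$ gives an element of $\mathrm{Aut}_d(\Omega)$ whose iterates have cluster set contained in $\overline{h(E(\sigma,R))}\cap\partial\Omega$ for every $R$, hence a cluster set that is a nondegenerate continuum (this is exactly the mechanism described in the ``easy'' examples in Section \ref{sub:easy}), so no Denjoy-Wolff point exists. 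For $(3)\Rightarrow(4)$: fix $\sigma\in\partial\D$; by $(3)$, $\bigcap_{R>0}\overline{h(E(\sigma,R))}\cap\partial\Omega$ is a single point $L$ — more carefully, one shows each $\overline{h(E(\sigma,R))}\cap\partial\Omega$ is a continuum and their nested intersection is the single boundary point guaranteed by $(3)$; then any sequence $\{z_n\}\to\sigma$ eventually inside some $E(\sigma,R)$ has $h(z_n)$ eventually inside $h(E(\sigma,R'))$ for every $R'>R_0$ once $n$ is large (using that $E(\sigma,R)$ are nested and that $z_n\to\sigma$ forces $z_n$ into arbitrarily small horospheres intersected with a neighborhood of $\sigma$), forcing $h(z_n)\to L$. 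So $H$-$\lim_{z\to\sigma}h=L$. The independence of the choice of Riemann map follows since two Riemann maps differ by an automorphism of $\D$, and automorphisms of $\D$ map horospheres at $\sigma$ to horospheres at the image point.

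The implication $(4)\Rightarrow(1)$ is the heart of the theorem and the main obstacle. Given $f\in\mathrm{Hol}_d(\Omega,\Omega)$, set $g=h^{-1}\circ f\circ h\in\mathrm{Hol}_d(\D,\D)$; by the classical Denjoy-Wolff theorem $g^{\circ n}\to\sigma$ uniformly on compacta for some $\sigma\in\partial\D$. I need to show $f^{\circ n}=h\circ g^{\circ n}\circ h^{-1}$ converges uniformly on compacta to the point $L:=H\text{-}\lim_{z\to\sigma}h$. Fix a compact $K\subset\Omega$; then $h^{-1}(K)$ is compact in $\D$, and by Julia's Lemma (or the classical Julia–Wolff–Carathéodory theory) $g$ maps horospheres $E(\sigma,R)$ into $E(\sigma,\lambda R)$ for a suitable $\lambda\le 1$ and, crucially, the iterates $g^{\circ n}(h^{-1}(K))$ are eventually contained in $E(\sigma,R)$ for \emph{every} fixed $R>0$ (because $g^{\circ n}(h^{-1}(K))\to\sigma$ and each horosphere is a neighborhood of $\sigma$ inside $\D$ in the appropriate sense — here I must be careful: this needs that the sequence lies in \emph{some} horosphere, which holds since Julia's lemma gives invariance of horospheres and the points converge to $\sigma$ non-tangentially in the horospherical sense). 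Then the definition of $H$-limit applied to the sequence $\{g^{\circ n}(w)\}$ — uniformly over $w\in h^{-1}(K)$, which requires upgrading the pointwise $H$-limit to a locally-uniform statement via a normal-families/equicontinuity argument — yields $f^{\circ n}(K)=h(g^{\circ n}(h^{-1}(K)))\to L$ uniformly.

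Finally, for the last sentence: assuming $(3)$, suppose some prime end $\underline{\xi}$ of $\Omega$ had principal part a nondegenerate continuum $\Lambda\subset\partial\Omega$. Choosing a Riemann map $h:\D\to\Omega$ and $\sigma\in\partial\D$ corresponding to $\underline{\xi}$, the principal part is computed as the intersection over $r<1$ of the closures of $h(\{z\in\D:|z-\sigma|<r\})$ and this set is contained in $\overline{h(E(\sigma,R))}\cap\partial\Omega$ for suitable $R$ (any sufficiently small Stolz-type or circular neighborhood of $\sigma$ sits inside a horosphere), so $\Lambda$ would be contained in a horosphere closure meeting $\partial\Omega$, contradicting that the latter is a single point. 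Alternatively, and more cleanly, invoke $(4)$: if $h$ has an $H$-limit at $\sigma$ it in particular has a radial (hence, by a Lindelöf-type theorem combined with the $H$-limit being stronger than non-tangential, unrestricted-along-the-radius) limit, and standard prime-end theory (Carathéodory) identifies the principal part with the radial cluster set when the latter is a point; hence the principal part is a single point. I expect the main technical work to be (a) the precise horosphere-invariance bookkeeping in $(4)\Rightarrow(1)$ and the passage from pointwise to locally uniform convergence, and (b) in $(2)\Rightarrow(3)$, the explicit construction of the fixed-point-free automorphism whose iterates exhaust the horospheres, which uses the Riemann-map conjugation and the classification of $\mathrm{Aut}(\D)$.
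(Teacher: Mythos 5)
The step $(2)\Rightarrow(3)$ is where your proposal has a genuine gap, and it is in fact the heart of the theorem --- not $(4)\Rightarrow(1)$, which is comparatively routine: Wolff's lemma gives $g(E(\sigma,R))\subseteq E(\sigma,R)$, so for a compact $K\subset\Omega$ the iterates of $g$ stay in a fixed horosphere while converging to $\sigma$, and applying the sequential definition of the $H$-limit to a diagonal sequence already yields uniform convergence on $K$; no normal-families argument is needed (this is essentially Lemma~\ref{lemma:DW-abstract}). In your contrapositive argument for $(2)\Rightarrow(3)$ you build an automorphism of $\Omega$ conjugate to a parabolic/hyperbolic automorphism of $\D$ fixing $\sigma$ and observe that its orbits are eventually contained in $h(E(\sigma,R))$, so their cluster sets lie in $\overline{h(E(\sigma,R))}\cap\partial\Omega$, a nondegenerate continuum. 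But containment of the cluster set in a continuum does not prevent the orbit from converging to a single point of that continuum, so you have not shown that the automorphism lacks a Denjoy--Wolff point. The ``mechanism of Section~\ref{sub:easy}'' you invoke (Proposition~\ref{Prop:no-DW}) excludes convergence only when the \emph{principal part} (equivalently, the non-tangential cluster set) of the relevant prime end is nondegenerate; failure of the $H$-limit is strictly weaker than that, as the paper's own Proposition~\ref{prop:Carsten} shows: there all principal parts are singletons, yet the $H$-limit fails at one boundary point. In that very example your argument produces no contradiction, while the theorem asserts that the associated parabolic automorphism has no Denjoy--Wolff point.

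The paper closes precisely this gap in the implication $(3)\Rightarrow(6)$ of Theorem~\ref{Thm:main}: assuming the automorphism of $\Omega$ conjugate to $w\mapsto w+i$ on $\Ha$ \emph{does} have a Denjoy--Wolff point $p$, one upgrades convergence along the discrete orbit to convergence along the whole vertical ray (a hyperbolic-distance estimate plus the distance lemma), applies Lehto--Virtanen to obtain the non-tangential limit $p$ at $\infty$, then uses a Jordan-curve and Carath\'eodory-extension argument to get convergence on a full quadrant $\{\Re w\geq R,\ \Im w\geq 0\}$, and finally repeats the construction with $w\mapsto w-i$ and matches the two limits along the real direction to conclude that the $H$-limit exists. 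Some argument of this depth (or its contrapositive) is indispensable; without it your cycle of implications is not closed. The remaining parts of your proposal --- $(1)\Rightarrow(2)$, $(3)\Rightarrow(4)$ via the nested horospheres and properness of $h$, $(4)\Rightarrow(1)$, and the deduction that principal parts are singletons from $\Gamma_N(h,\sigma)\subset\Gamma_H(h,\sigma)$ --- are consistent with the paper's route through Proposition~\ref{Prop:simil-equal} and Lemma~\ref{lemma:DW-abstract}.
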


Theorem~\ref{Thm:main Intro} is a consequence of Theorem~\ref{Thm:main}, which also provides additional details. For example, in statement (2), it is sufficient to focus solely on ``parabolic'' automorphisms, and in statement (3), it suffices to consider Busemann horospheres of a fixed radius. 

Condition (4) in Theorem~\ref{Thm:main Intro}   is sharp. That is to say, the condition of having $H$-limit is not equivalent to either having unrestricted limits or to having non-tangential limits.  Indeed (see Proposition~\ref{prop:Carsten}), there exists a univalent map $g$ defined on $\D$ which has non-tangential limit at every point of $\partial\D$ but fails to  have  $H$-limit at some boundary point, and, thus, $g(\D)$ does not have the Denjoy-Wolff Property although the principal part of every prime end is a singleton. 

On the other hand, by Caratheordory's extension theorem, a univalent map on $\D$ has unrestricted limit at every point of $\partial \D$ if and only if the boundary of $f(\D)$ is locally connected. We prove the following:
\begin{proposition}\label{Thm:starlike}
There exists a bounded simply connected domain $D\subset \C$ which has the Denjoy Wolff property but whose boundary is not locally connected. In particular   any Riemann map $h:\D\to D$ does not extend continuously to $\partial \D$,  but has $H$-limit (and hence non-tangential limit) at every point of $\partial \D$.
\end{proposition}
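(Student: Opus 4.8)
The plan is to construct $D$ explicitly as a ``comb''-type domain, i.e. a domain whose boundary contains infinitely many slits accumulating along an arc, so that $\partial D$ fails to be locally connected while the geometry is still tame enough to rule out non-trivial principal parts of prime ends. A standard model is to take, say, the rectangle $(-1,1)\times(0,2)$ and remove the vertical segments $\{1/n\}\times(0,1]$ for $n\in\N$ together with the segment $\{0\}\times(0,1]$; the boundary is not locally connected at the points $\{0\}\times(0,1)$, so by Carath\'eodory's extension theorem no Riemann map $h:\D\to D$ extends continuously to $\partial\D$. One must be slightly careful to arrange the slits so that no prime end has a non-degenerate principal part: with the segments arranged as above, each point of the limiting segment $\{0\}\times(0,1]$ is still accessible from only one ``side'' via a single prime end whose principal part is that point, and the impression of the prime ends sitting ``between'' consecutive teeth is a single slit point-by-point; the key point is that every impression that could be large is in fact a single point, because the teeth are genuine slits (empty interior) rather than, say, fattened triangles. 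Actually, to be safe I would choose a variant (for instance the teeth alternating in length, or a domain bounded by a single spiralling curve) in which one can check directly that the principal part of every prime end is a singleton.

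The main step is then to verify the Denjoy-Wolff Property for $D$, and here the cleanest route is to invoke the equivalence $(1)\Leftrightarrow(4)$ (or $(1)\Leftrightarrow(3)$) of Theorem~\ref{Thm:main Intro}: it suffices to show that some Riemann map $h:\D\to D$ has an $H$-limit at every $\sigma\in\partial\D$. By the remark after Definition~\ref{Def:H-limits}, wherever $h$ already has an unrestricted limit (which, by Carath\'eodory, is exactly the set of $\sigma$ corresponding to prime ends with singleton impression, i.e. all of $\partial\D$ except the countably many $\sigma$'s mapping to the accumulation slit $\{0\}\times(0,1]$) the $H$-limit exists automatically. So the real work is confined to the finitely-or-countably many exceptional boundary points $\sigma$ whose prime end is one of the ``bad'' ones along the accumulation segment, and there I must show that every sequence $z_n\to\sigma$ lying in a horosphere $E(\sigma,R)$ has $h(z_n)$ converging. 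The horosphere condition forces $z_n\to\sigma$ non-tangentially in a quantitative, ``horocyclic'' way; I would translate this, via the Koebe distortion theorem and a quantitative Lindel\"of-type argument, into the statement that the images $h(z_n)$ are forced to approach the point of the accumulation segment that is the principal part of that prime end, using that near such a boundary point $D$ looks like a half-plane with a sequence of ever-thinner slits removed, for which harmonic-measure/extremal-length estimates give the needed decay.

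The hard part, and where the argument genuinely differs from the locally-connected case, is precisely this horospheric estimate at the exceptional points: I must show that although the full (unrestricted) limit of $h$ does \emph{not} exist there — sequences approaching $\sigma$ tangentially can be steered into the ``slots'' between different teeth and hence converge to different points — the horosphere $E(\sigma,R)$ is a thin enough region that $h$ is eventually trapped against a single tooth, so that $h(z_n)$ has a limit. Concretely, this reduces to a sharp comparison between the horocyclic neighbourhoods $E(\sigma,R)$ in $\D$ and the geometry of the slit domain near the accumulation point, which one can do either by direct conformal mapping of a model slit half-plane or by a Harnack/harmonic-measure estimate showing that the harmonic measure of ``escaping past the next tooth'' tends to zero along any horocyclic sequence. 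Once this is established for the chosen $D$, Proposition~\ref{Thm:starlike} follows: $D$ has the Denjoy-Wolff Property by $(4)\Rightarrow(1)$, yet $\partial D$ is not locally connected, and in particular $h$ does not extend continuously to $\partial\D$ while still possessing $H$-limits (hence non-tangential limits) everywhere.

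One should also double-check the ``bounded'' requirement, which is automatic since $D$ is taken to be a bounded subdomain of $\C$ from the outset; and the final clause of the proposition is then just a restatement, using Carath\'eodory's extension theorem for the failure of continuous extension and the implication $(1)\Rightarrow(4)$ of Theorem~\ref{Thm:main Intro} for the existence of $H$-limits, together with the general fact recorded in the excerpt that an $H$-limit is stronger than a non-tangential limit.
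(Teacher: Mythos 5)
Your overall strategy coincides with the paper's: both take a comb-type domain whose teeth accumulate on a removed slit (the paper uses the domain \eqref{Eq:D-doman}, with horizontal slits $L_{\pm a_n}$ accumulating on $L_0$) and both reduce the Denjoy--Wolff Property to the horosphere/$H$-limit criterion of Theorem~\ref{Thm:main}. The problem is that the one step that actually carries the proof is missing. Everything in your outline up to ``the real work is confined to the exceptional boundary points'' is routine; the content of the proposition is precisely the quantitative estimate at the exceptional prime end, and there you only offer a menu of possible techniques (``Koebe distortion and a quantitative Lindel\"of-type argument'', ``harmonic-measure/extremal-length estimates'', ``direct conformal mapping of a model slit half-plane'') without carrying any of them out. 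This is a genuine gap, not a presentational one, because whether the horospheric trapping you assert actually holds depends delicately on the relative sizes of the gaps and the spacing of the teeth: the paper's own Proposition~\ref{prop:Carsten} (the domain \eqref{eq:Carsten domain}, after Petersen) is a comb domain in which every prime end has singleton principal part and yet the closure of every Busemann horosphere centered at the exceptional prime end meets the \emph{entire} accumulation segment, so the $H$-limit fails and the Denjoy--Wolff Property fails. So one cannot conclude by soft arguments that ``the horosphere is thin enough''; a computation tied to the specific geometry is required. The paper does this in Proposition~\ref{Prop:good-domain} by hyperbolic-metric estimates: an upper bound for the distance from a point near the mouth of a thin strip back to the base point (Lemma~\ref{Lem:stima-up}), the good-box Lemma~\ref{Lem:stima-down} from \cite{BCDG} forcing geodesics to pass near the middle of the strip's mouth, a lower bound of order $(k-h)/\epsilon_n$ for penetrating a strip of half-width $\epsilon_n$ to depth $k$, and the symmetry Remark~\ref{rem:symmetric domains}; combining these with the Busemann-function inequality defining the horosphere yields the contradiction $\tfrac{k-h}{8}\le 2h$. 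Your proposal contains no analogue of this balance of estimates, and also hedges on the choice of the domain itself (``to be safe I would choose a variant''), so neither the example nor its verification is actually pinned down.

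A secondary but telling error: you write that ``the horosphere condition forces $z_n\to\sigma$ non-tangentially in a quantitative way.'' This is false. The horosphere $E(\sigma,R)$ is an internally tangent disc at $\sigma$, so it contains sequences converging to $\sigma$ tangentially; indeed, if horospheric approach were non-tangential, existence of non-tangential limits would imply existence of $H$-limits, contradicting Proposition~\ref{prop:Carsten}. The whole difficulty of the proposition lives in exactly this tangential regime, which is why the missing estimate cannot be dismissed as standard.
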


As shown in Theorem~\ref{Thm:main Intro}, if a simply connected domain has the Denjoy-Wolff Property, then the principal part of each prime end is a singleton, but, as said before, this condition is not sufficient for granting the Denjoy-Wolff Property. Still, in Section~\ref{sub:easy}, we prove that for a certain class of holomorphic self-maps without fixed points---the so-called {\sl hyperbolic} maps---the condition that every prime end is a singleton is  sufficient (see Corollary~\ref{Cor:hyper-DW}).

Another concept strictly related to the Denjoy-Wolff theorem is that of {\sl visibility} (introduced in \cite{BZ} and, in an equivalent form in case of Gromov hyperbolic domains, in \cite{BNT}). Let $\Omega\subsetneq\C$ be a simply connected domain. Recall that a {\sl geodesic segment} in $\Omega$ is a curve $\gamma:[0,R]\to\Omega$, $R\in (0,+\infty)$ which minimizes the hyperbolic length between $\gamma(s)$ and $\gamma(t)$ for every $s,t\in [0,R]$.

A simply connected domain $\Omega\subsetneq\C$ is {\sl visible} if given any two points $p, q\in\partial\Omega$, $p\neq q$ there exists a compact set $K\subset\subset \Omega$ such that for every sequence of geodesic segments  $\{\gamma_n\}$ such that,  whenever  $\gamma_n:[0, R_n]\to \Omega$ has the property that $\{\gamma_n(0)\}$ converges to $p$ and $\{\gamma_n(R_n)\}$ converges to $q$, then $\gamma_n([0,R_n])\cap K\neq\emptyset$ for all $n$ sufficiently large.

By a result of Bharali and Maitra \cite{BM} (which works in general for bounded domains in $\C^N$, $N\geq 1$)  if $\Omega\subset\C$ is a bounded simply connected visible domain then it satisfies condition (1) of Theorem~\ref{Thm:main}---and hence, all the equivalent conditions of Theorem~\ref{Thm:main Intro}. By \cite[Corollary~3.4]{BNT}, a simply connected bounded domain $\Omega$  in $\C$ is visible if and only if $\partial\Omega$ is locally connected. Therefore, the example in  Proposition~\ref{Thm:starlike}  shows that, in general, there are domains that are not visible and yet have the  Denjoy-Wolff Property:
\begin{corollary}
There exist bounded simply connected domains which have the Denjoy-Wolff Property and are not visible.
\end{corollary}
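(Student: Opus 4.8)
The plan is to read off the corollary directly from Proposition~\ref{Thm:starlike} together with the visibility criterion recalled just above. First I would take $D\subset\C$ to be the bounded simply connected domain produced in Proposition~\ref{Thm:starlike}; by that proposition, $D$ has the Denjoy-Wolff Property while $\partial D$ fails to be locally connected. Then I would invoke \cite[Corollary~3.4]{BNT}, according to which a bounded simply connected planar domain is visible if and only if its boundary is locally connected, to conclude that $D$ is not visible. Hence $D$ witnesses the existence of a bounded simply connected domain that has the Denjoy-Wolff Property and is not visible, which is precisely the statement of the corollary.

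I do not expect any obstacle at this stage: all the substance sits in Proposition~\ref{Thm:starlike}, whose proof constructs a single domain $D$ simultaneously exhibiting both features; here one only has to quote the characterization of visibility. The conceptual point, which could be recorded as a remark, is that by Theorem~\ref{Thm:main Intro} the Denjoy-Wolff Property merely forces the principal part of every prime end of $\Omega$ to be a singleton, which is strictly weaker than local connectivity of $\partial\Omega$, and hence (by \cite[Corollary~3.4]{BNT}) strictly weaker than visibility in the sense of \cite{BZ}. Since, conversely, the theorem of Bharali and Maitra \cite{BM} shows that every bounded visible domain has the Denjoy-Wolff Property, the corollary makes precise that visibility is a sufficient but not a necessary condition for the Denjoy-Wolff Property, as announced in the abstract.
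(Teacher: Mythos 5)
Your argument is correct and is precisely the one the paper uses: take the domain $D$ of Proposition~\ref{Thm:starlike}, which has the Denjoy-Wolff Property but non-locally-connected boundary, and apply \cite[Corollary~3.4]{BNT} to conclude that $D$ is not visible. No gaps; the added remark on sufficiency versus necessity of visibility matches the paper's own discussion.
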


\bigskip

{\bf Acknowledgements} We are thankful to  Carsten Petersen for  kindly bringing to our attention his preprint \cite{PetersenPrimeEnds} which essentially contains the  construction of the example in Proposition~\ref{prop:Carsten}. We also thank Marco Abate for some helpful discussions about the topic of this paper.

\section{Preliminary results}\label{Sec:due}

\subsection{Prime ends and Carath\'eodory compactification} \label{sec:basic definitions}

Let $\Omega\subsetneq\C$ be a simply connected domain.
 One can define a natural compactification of $\Omega$ by considering the Carath\'eodory prime ends compactification (see, {\sl e.g.}, \cite[Chapter~4]{BCDbook} for details). If $\partial_C\Omega$ denotes the set of all prime ends of $\Omega$, one can define the Carath\'eodory topology on $\overline{\Omega}^C:=\Omega\cup \partial_C\Omega$ in order to make this space compact and Hausdorff. 

If $\Omega=\D$, the unit disc in $\C$, then the identity map ${\sf id}_\D: \D\to\D$ extends in a unique way as a homeomorphism ${\sf id}_\D: \overline{\D}^C \to \overline{\D}$. For $\sigma\in\D$ we denote by $\underline{\sigma}:={\sf id}_\D^{-1}(\sigma)$ (in other words, $\underline{\sigma}$ is the only prime end in $\D$ associated with $\sigma$).

Let $h:\D\to\Omega$ be a Riemann map. Then $h$ extends in a unique way as a homeomorphism $\tilde{h}:\overline{\D}^C\to\overline{\Omega}^C$. Thus, $\hat{h}:=\tilde{h} \circ {\sf id}_\D^{-1}:\overline{\D}\to \overline{\Omega}^C$ is a homeomorphism. 

As a consequence of Carath\'eodory's extension theorem,  $\partial\Omega$ is locally connected if and only if the identity map ${\sf id}: \Omega\to \Omega$ extends as a continuous surjective map $\tilde{\sf id}:\overline{\Omega}^C\to \overline{\Omega}$. Such a map is  a homeomorphism if and only if $\partial \Omega$ is a Jordan curve.

At every prime end $\underline{\xi}\in \partial_C\Omega$ one can associate two compact subsets of $\C_\infty$. The {\sl impression} $I(\underline{\xi})$ of $\underline{\xi}$ is the set of all points $p\in \C_\infty$ such that there exists a sequence $\{z_n\}\subset\Omega$ converging to $\underline{\xi}$ in the Carath\'eodory topology of $\Omega$ and to $p$ in the topology of $\C_\infty$. The {\sl principal part} $\Pi(\underline{\xi})$ of $\underline{\xi}$ is the set of all points $p\in \C_\infty$ such that there exists a null-chain $(C_n)$ representing $\underline{\xi}$ and a sequence $\{z_n\}$ with $z_n\in C_n$ such that $\{z_n\}$ converges to $p$ in $\C_\infty$.

By Carath\'eodory extension theorem (see, {\sl e.g.}, \cite[Prop. 4.4.4]{BCDbook}), for every $\zeta\in\partial \D$  the cluster set $\Gamma(h,\zeta)$ coincides with the impression of the prime end $\hat{f}(\underline{\zeta})$.

\subsection{Hyperbolic distance, horospheres and geodesic rays}\label{sec:hyp}

Let $\Omega\subsetneq\C$ be a simply connected domain. We denote by $\kappa_\Omega$ the {\sl hyperbolic metric} of $\Omega$. This can be defined in several equivalent ways (see, {\sl e.g.}, \cite[Chapter 5]{BCDbook} for details). For any point  $z\in \Omega$  let $h:\D\to\Omega$ be the only biholomorphism (Riemann map) such that $h(0)=z$ and $h'(0)>0$. Consider a vector  $v\in \C$. On the tangent space of $\C$ at $z$ we then define the metric
\[
\kappa_\Omega(z;v)=\frac{\|v\|}{h'(0)},
\]
where $\|v\|$ is the norm of $v$. 
With this normalization, the hyperbolic metric of $\Omega$ has constant curvature $-4$. In particular,
\begin{equation}\label{Eq:Poinc-disc}
\kappa_\D(z)=\frac{|dz|}{1-|z^2|}, 
\end{equation}
while, if $\H:=\{w\in\C: \Re w>0\}$,
\begin{equation}\label{Eq:Poinc-semip}
\kappa_\H(w)=\frac{|dw|}{2\Re w}.
\end{equation}
On simply connected domains we have the following standard inequality (see {\sl e.g.}, \cite[Theorem~5.2.1]{BCDbook}). 
\begin{lemma}[Estimates on hyperbolic metric]\label{lem:hyp metric on simply connected}
Let $\Omega$ be a simply connected domain. Then for any $z\in\Omega$ and $v\in\C$, 
$$
\frac{|v|}{4\dist(z,\partial \Omega)}\leq \kappa_\Omega(z;v)\leq\frac{|v|}{\dist(z,\partial \Omega)},
$$
where $\dist(z,\partial \Omega)$ denotes the Euclidean distance of $z$ from the Euclidean boundary of $\Omega$. 
\end{lemma}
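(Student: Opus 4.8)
The plan is to establish the two inequalities independently. For the upper bound I would invoke the fact that the hyperbolic metric is non-increasing under holomorphic maps (Schwarz--Pick), applied to the inclusion into $\Omega$ of the largest Euclidean disc centered at $z$; for the lower bound I would apply the Koebe one-quarter theorem to the Riemann map. Concretely, fix $z\in\Omega$, set $d:=\dist(z,\partial\Omega)$, and let $h:\D\to\Omega$ be the Riemann map with $h(0)=z$ and $h'(0)>0$, so that by the very definition of $\kappa_\Omega$ one has $\kappa_\Omega(z;v)=|v|/h'(0)$. The whole statement then reduces to the two-sided estimate $d\le h'(0)\le 4d$.

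For the inequality $h'(0)\ge d$ (equivalently $\kappa_\Omega(z;v)\le |v|/d$): the Euclidean disc $\Delta:=\{w\in\C:|w-z|<d\}$ is contained in $\Omega$, the inclusion $\Delta\hookrightarrow\Omega$ is holomorphic and hence does not increase the hyperbolic metric, and $\Delta$ carries hyperbolic metric $|v|/d$ at its center $z$; the latter one reads off from \eqref{Eq:Poinc-disc} by transporting it through the affine biholomorphism $\D\ni w\mapsto z+dw\in\Delta$, which sends $0$ to $z$ with derivative $d$. Combining, $\kappa_\Omega(z;v)\le \kappa_\Delta(z;v)=|v|/d$.

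For the inequality $h'(0)\le 4d$ (equivalently $\kappa_\Omega(z;v)\ge |v|/(4d)$): the map $w\mapsto (h(w)-z)/h'(0)$ is univalent on $\D$, fixes $0$, and has derivative $1$ at $0$, so by the Koebe one-quarter theorem its image contains $\{|w|<1/4\}$; translating back, $\Omega=h(\D)$ contains the disc $\{|w-z|<h'(0)/4\}$, and therefore $d=\dist(z,\partial\Omega)\ge h'(0)/4$.

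I do not expect a genuine obstacle here: the only external inputs are the classical Schwarz--Pick lemma and the Koebe one-quarter theorem, both of which may be taken as known in this context (see, e.g., \cite[Chapter~5]{BCDbook}). If a self-contained treatment were wanted, the only mildly delicate ingredient would be the Koebe theorem used in the lower bound; since the paper otherwise relies freely on standard geometric function theory, I would simply quote it.
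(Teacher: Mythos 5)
Your proof is correct, and the constants work out exactly with the paper's normalization (curvature $-4$, so $\kappa_\D(0;v)=|v|$ and $\kappa_\Omega(z;v)=|v|/h'(0)$): Schwarz--Pick applied to the inclusion of the disc of radius $d=\dist(z,\partial\Omega)$ gives the upper bound, and Koebe one-quarter applied to $w\mapsto (h(w)-z)/h'(0)$ gives the lower bound. The paper does not prove this lemma at all --- it simply quotes it as \cite[Theorem~5.2.1]{BCDbook} --- and your argument is precisely the standard proof behind that citation, so there is nothing to compare beyond noting that you have supplied the details the paper leaves to the reference.
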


The {\sl hyperbolic length} of an absolutely continuous curve $\gamma:[0,1]\to\Omega$ is
\[
\ell_\Omega(\gamma):=\int_0^1\kappa_\Omega(\gamma(t); \gamma'(t))dt.
\]
Given $z, w\in \Omega$, the {\sl hyperbolic distance} between $z$ and $w$ is
\[
k_\Omega(z,w)=\inf_{\gamma\in \Gamma_{z,w}}\ell_\Omega(\gamma),
\]
where $\Gamma_{z,w}$ is the set of all absolutely continuous curves joining $z$ with $w$. Any biholomorphism between two simply connected domains is an isometry for the corresponding hyperbolic metrics and distances. 

A {\sl geodesic} (for the hyperbolic distance) is an absolutely continuous curve $\gamma:[0,1]\to\Omega$ such that for all $0\leq s\leq t\leq 1$
\[
k_\Omega(\gamma(s),\gamma(t))=\int_s^t \kappa_\Omega(\gamma(r);\gamma'(r))dr.
\]
A {\sl geodesic ray} is an absolutely continuous curve\footnote{In order to avoid burdening the notation in this section we are considering geodesic rays defined in $[0,+\infty)$, but, clearly, the definition is independent of the interval of definition. 
} $\gamma:[0,+\infty)\to\Omega$ such that $k_\Omega(z_0, \gamma(t))\to +\infty$ as $t\to +\infty$ for some---and hence any---$z_0\in\Omega$ and $\gamma|_{[0,T]}$ is a geodesic for all $T>0$. 
It can be proven that every geodesic/geodesic ray in $\Omega$ is a real analytic curve. 

Fix $z_0\in \Omega$. Then there is a one-to-one correspondence among $\partial_C\Omega$ (the prime ends of $\Omega$) and the set of geodesic rays starting from $z_0$. Such a correspondence can be constructed as follows. Let $f:\D\to \Omega$ be a Riemann map such that $f(0)=z_0$. Then $\gamma$ is a geodesic rays starting from $z_0$ if and only if there exists $\sigma\in\partial\D$ such that $\gamma(r)=f(r\sigma)$, $r\in [0,1)$. Hence, the correspondence\footnote{Such a correspondence is also a homeomorphism between the Carath\'eodory boundary of $\Omega$ and the so-called ``Gromov boundary'' of $\Omega$, but we do not need this in here.} is given by associating to $\gamma$ the prime end $\hat{f}(\underline{\sigma})\in \partial_C\Omega$. 

Given  $M\in\R$ and a geodesic ray $\gamma$, we define the {\sl (Busemann) horosphere} 
\begin{equation}\label{Eq:def-oro}
E_\Omega(\gamma, M):=\{z\in \Omega : \lim_{t\to+\infty} [k_\Omega(z, \gamma(t))-k_\Omega(\gamma(0), \gamma(t))]<M\}.
\end{equation}

Let $\D$ be the unit disc and let $R=e^{2M}$ (that is, $M=\frac{1}{2}\log R$). If $h:\D\to \Omega$ is a Riemann map such that $h(0)=\gamma(0)$ then there exists $\sigma\in\partial \D$ such that
\begin{equation}\label{Eq:oro-in-disc-back}
h^{-1}(E_\Omega(\gamma, M))=\{z\in \D: \frac{|\sigma-z|^2}{1-|z|^2}<R\}=E(\sigma,R),
\end{equation}
where $E(\sigma,R)$ is the horosphere defined in \eqref{eq:horospheres in D}.  Notice that for different choices of the Riemann map  $h$, the collection  of Busemann horospheres  will remain the same, but each Busemann horosphere will be associated to  the horosphere centered at a different $\sigma\in\partial \D$. 

\begin{rem}\label{rem:sliding along geodesics}
It follows directly by (\ref{Eq:oro-in-disc-back}), together with the fact that the hyperbolic distance is additive along a geodesic,  that  if $z\in E_\Omega(\gamma, M)$  for some $M$, then the geodesic segment connecting $z$ to $\gamma(0)$ is also  contained in the  same horosphere.  
\end{rem}

The following lemma shows that if two geodesic rays converge to the same prime end,  the corresponding families of horospheres are the same up to changing the parameter $M$.
\begin{lemma}\label{lemma:oro-in-simply-change}
Let $\Omega\subsetneq \C$ be a simply connected domain. Let $\underline{\xi}\in\partial_C \Omega$. If $\gamma, \eta:[0,+\infty)\to\Omega$ are two geodesic rays converging in the Carath\'eodory topology of $\Omega$ to $\underline{\xi}$, then  there exists $T=T(\gamma, \eta)\in\R$ such that for all $M\in\R$
\begin{equation}
\label{Eq:oro-cambia}
E_\Omega(\eta, M)=E_\Omega(\gamma, M+T).
\end{equation} 
\end{lemma}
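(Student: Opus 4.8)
The plan is to transfer everything to the unit disc via a Riemann map and use the explicit description \eqref{Eq:oro-in-disc-back} of Busemann horospheres. Fix a Riemann map $h:\D\to\Omega$ with $h(0)=\gamma(0)$. Since $\gamma$ is a geodesic ray starting at $\gamma(0)$, by the correspondence recalled in Subsection~\ref{sec:hyp} there is $\sigma\in\partial\D$ with $\gamma(r)=h(r\sigma)$, $r\in[0,1)$ (after a harmless reparametrization), and $h^{-1}(E_\Omega(\gamma,M))=E(\sigma,R)$ with $R=e^{2M}$. The ray $\eta$ starts at $\eta(0)$, which in general differs from $\gamma(0)$; but $\eta$ is a geodesic ray of $\Omega$, so $h^{-1}\circ\eta$ is a geodesic ray of $\D$, hence eventually coincides (up to parametrization) with a Euclidean radius-type geodesic of $\D$ landing at some point $\sigma'\in\partial\D$. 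Because $\gamma$ and $\eta$ converge to the same prime end $\underline{\xi}$, and the correspondence prime ends $\leftrightarrow$ geodesic rays (equivalently $\partial\D\leftrightarrow\partial_C\Omega$ via $\hat h$) is a bijection, we must have $\sigma'=\sigma$. So both families of horospheres are, after pulling back by $h$, families of horospheres of $\D$ centered at the \emph{same} boundary point $\sigma$.

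Next I would identify how the horospheres transform under the change of base point. In $\D$, the assertion reduces to: if $\gamma,\eta$ are two geodesic rays of $\D$ landing at $\sigma$, then $E_\D(\eta,M)=E_\D(\gamma,M+T)$ for a constant $T$ independent of $M$. Here I would use the Busemann-function formulation \eqref{Eq:def-oro}: the limit defining $E_\Omega(\eta,M)$ is $\lim_{t}[k_\Omega(z,\eta(t))-k_\Omega(\eta(0),\eta(t))]$, and I want to compare it with $\lim_{t}[k_\Omega(z,\gamma(t))-k_\Omega(\gamma(0),\gamma(t))]$. Since $\gamma$ and $\eta$ converge to the same prime end / same point $\sigma$ of $\partial\D$, the two sequences $\{\gamma(t)\}$, $\{\eta(t)\}$ are ``asymptotic'' and the standard fact that Busemann functions associated to asymptotic geodesic rays differ by an additive constant applies: $\lim_t[k_\Omega(z,\eta(t))-k_\Omega(z,\gamma(t))]$ exists and is independent of $z$. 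Concretely, in $\D$ one can compute the Busemann function at $\sigma$ explicitly (it is, up to sign and an additive normalization, $\tfrac12\log\frac{|\sigma-z|^2}{1-|z|^2}$, exactly matching \eqref{Eq:oro-in-disc-back}), so the two Busemann functions of $\gamma$ and $\eta$ are literally the same function $z\mapsto\tfrac12\log\frac{|\sigma-z|^2}{1-|z|^2}$ up to a constant depending only on the base points; call the resulting shift $T$. Then $z\in E_\Omega(\eta,M)$ iff that common function is $<M$ minus the $\eta$-normalization, iff it is $<M+T$ minus the $\gamma$-normalization, i.e. $z\in E_\Omega(\gamma,M+T)$, uniformly in $M$.

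Finally I would assemble these pieces: the pullback identity $h^{-1}(E_\Omega(\gamma,M))=E(\sigma,e^{2M})$ together with the analogous one for $\eta$ (with the same $\sigma$ but shifted radius $e^{2(M+T)}$), and then push forward by $h$ to obtain \eqref{Eq:oro-cambia} with this $T=T(\gamma,\eta)$. The main obstacle, and the step needing the most care, is justifying that the limit in \eqref{Eq:def-oro} genuinely defines the same function (up to an additive constant) for two geodesic rays tending to the same prime end — i.e. that the horofunction depends only on the boundary point and not on the particular geodesic ray or its base point. In $\D$ this is transparent from the explicit formula, which is why passing to the disc first is the right move; one only needs to invoke that distinct boundary points of $\D$ correspond to distinct prime ends of $\Omega$ (injectivity of $\hat h$), so that $\gamma$ and $\eta$ really do land at the same $\sigma$, and that the hyperbolic distance is additive along geodesics (Remark~\ref{rem:sliding along geodesics}) to keep track of the base-point normalization cleanly.
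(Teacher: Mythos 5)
Your proposal is correct and follows essentially the same route as the paper: transfer to $\D$ via a Riemann map, use that geodesic rays in $\D$ land, invoke the explicit horofunction formula $\lim_{w\to\sigma}[k_\D(z,w)-k_\D(0,w)]=\tfrac12\log\frac{|\sigma-z|^2}{1-|z|^2}$ (the paper's \eqref{eq-limit-oro-eucl}), and obtain $T$ as the difference of the base-point normalizations of the two rays. The only cosmetic slip is describing the pullback of $\eta$ as a ``radius-type'' geodesic (it is a circular arc orthogonal to $\partial\D$ when the base point is not $0$), but all you need is that it lands at the same $\sigma$, which your prime-end argument gives.
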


\begin{proof}
Since Riemann maps extend as homeomorphisms in the Carath\'eodory compactifications, and are isometries for the hyperbolic metric, it is enough to prove the result for $\Omega=\D$. 

Let $\gamma$ be a geodesic ray in $\D$. It is well known  (or, see, {\sl e.g.}, \cite[Chapter~5]{BCDbook}),  that every geodesic ray in $\D$ {\sl lands}, that is, there exists $\sigma\in\partial\D$ such that $\lim_{t\to+\infty}\gamma(t)=\sigma$. Moreover  (see, {\sl e.g.}, \cite[Proposition~1.4.2]{BCDbook}), for every $\sigma\in\partial\D$ and $z\in\D$, 
\begin{equation}\label{eq-limit-oro-eucl}
\lim_{w\to \sigma}[k_\D(z,w)-k_\D(0,w)]=\frac{1}{2}\log\left(\frac{|\sigma-z|^2}{1-|z|^2} \right).
\end{equation}
Note that, since univalent maps are isometries for the hyperbolic distance, \eqref{Eq:oro-in-disc-back} follows immediately from \eqref{eq-limit-oro-eucl}.

Given a geodesic ray $\gamma:[0,+\infty)\to\D$, let 
$$
L(\gamma):=\frac{1}{2}\log\left(\frac{|\sigma-\gamma(0)|^2}{1-|\gamma(0)|^2} \right).
$$
 Hence, by \eqref{eq-limit-oro-eucl}, 
given two geodesic rays $\gamma, \eta:[0,+\infty)\to\D$ such that $\lim_{t\to+\infty}\gamma(t)=\lim_{t\to+\infty}\eta(t)=\sigma\in\partial\D$,   for every $z\in \D$  we have
\begin{equation*}
\begin{split}
\lim_{t\to +\infty}[&k_\D(z,\gamma(t))-k_\D(\gamma(0),\gamma(t))]=
\lim_{t\to +\infty}[k_\D(z,\gamma(t))-k_\D(0,\gamma(t))]-L(\gamma)\\&=\lim_{t\to +\infty}[k_\D(z,\eta(t))-k_\D(0,\eta(t))]-L(\gamma)\\&=\lim_{t\to +\infty}[k_\D(z,\eta(t))-k_\D(\eta(0),\eta(t))]+L(\eta)-L(\gamma).
\end{split}
\end{equation*}
Thus, setting $T=L(\eta)-L(\gamma)$ we are done.
\end{proof}

The following is a remark about symmetric domains.  For $z\in\C$, as usual, we denote its complex conjugate by $\ov{z}$.

\begin{rem}[Symmetric domains] \label{rem:symmetric domains}
Let $\Omega$  be a simply connected domain which is symmetric with respect to the real axis. Then for any $r_0\in \Omega\cap\R$ there is a Riemann map  $h:\Omega\ra D$  such that $h(0)=r_0$ and $h(z)=\ov{h(\ov{z})}$ for any $z\in\D$.  It follows that
\begin{itemize}
\item $\Omega\cap\R$ is connected and it is (the image of) a geodesic in $\Omega$.
\item if $\gamma:[0,1)\to \Omega$ is the geodesic given by $\gamma(t):=h(t)$, then $E_\Omega(\gamma, M)$ is symmetric with respect to the real axis for all $M\in\R$, that is, $z\in E_\Omega(\gamma, M)$ if and only if $\overline{z}\in E_\Omega(\gamma, M)$.
\end{itemize}
The first statement can be found, {\sl e.g.}, in  \cite[Proposition~6.1.3]{BCDbook} (see also its proof). As for the second statement, by \eqref{Eq:def-oro},  $E_\Omega(\gamma, M)=h(E(1,R))$ for some suitable $R>0$. Let $w\in E(1,R)$. Hence,  $\overline{w}\in E(1,R)$. Therefore,  $\overline{h(w)}=h(\overline{w})\in E_\Omega(\gamma, M)$.
\end{rem}

\subsection{The divergence rate and parabolic automorphisms of simply connected domains}\label{subsect:divergence}

Let $\Omega\subsetneq \C$ be a simply connected domain. Let $g:\Omega\to\Omega$ be holomorphic. In \cite{ArBr, BCDbook} it has been proved that the following limit exists (finite) and does not depend on $z_0\in\Omega$:
\[
c(g):=\lim_{n\to\infty} \frac{k_\Omega(g^{\circ n}(z_0), z_0)}{n}.
\]
The number $c(g)\in [0,+\infty)$ is called the {\sl divergence rate} of $g$.

In case $g\in \hbox{Aut}_d(\Omega)$, we say that $g$ is {\sl parabolic} provided $c(g)=0$. 

If $h:\D\to \Omega$ is a Riemann map, then $g$ is a parabolic automorphism of $\Omega$ if and only if there exists an automorphism $\phi$ of $\D$ such that $g=h \circ \phi \circ  h^{-1}$ and such that $\phi$ is a parabolic automorphism of $\D$, {\sl i.e.}, $\phi$ is a linear fractional self-map of $\D$, $\phi(\D)=\D$ , $\phi$ is not the identity and there exists $\tau\in\partial\D$ such that $\phi(\tau)=\tau$ and $\phi'(\tau)=1$.

\subsection{Unrestricted limits, $H$-limits and non-tangential limits} \label{sec:limits}
Let $h:\D\to \C$ be a holomorphic function.  As customary, we say that a sequence $\{z_n\}\subset \D$ converges {\sl non-tangentially}  to $\sigma\in\partial \D$ if $|\sigma-z_n|\geq c (1-|z_n|)$ for some $c>0$ and for all $n$.

Recall the definition of horosphere $E(\sigma, R)$ as in \eqref{eq:horospheres in D}. We consider the following cluster sets: 
\begin{itemize}
\item  The {\sl cluster set}  of $h$ at $\sigma$ is
$$
\Gamma(h, \sigma):=\{p\in\C_\infty: \exists \{z_n\}\subset \D, \lim_{n\to\infty}z_n=\sigma, \lim_{n\to\infty}h(z_n)=p\}.
$$
\item The {\sl non-tangential cluster set}  of $f$ at $\sigma$ is
$$
 \Gamma_{N}(h, \sigma) :=\{p\in\C_\infty: \exists \{z_n\}\subset \D, \text{ $\{z_n\}$ converges non-tangentially to $\sigma$,} \lim_{n\to\infty}h(z_n)=p\}.
$$
\item The {\sl $H$-cluster set of radius $R$} of $f$ at $\sigma$ is
$$
\Gamma_H(h, \sigma, R):=\{p\in\C_\infty: \exists \{z_n\}\subset E(\sigma, R), \lim_{n\to\infty}z_n=\sigma, \lim_{n\to\infty}h(z_n)=p\}.
$$
\end{itemize}

Note that $\Gamma(h, \sigma)=\{p\}$ if and only if $h$ has \emph{ (unrestricted) limit} $p\in\C_\infty$ at $\sigma\in\partial \D$, that is $\lim_{\D\ni z\to \sigma}h(z)=p$. Also, $\Gamma_{N}(h, \sigma)=\{p\}$ if and only if $h$  has \emph{non-tangential limit} $p\in\C_\infty$ at $\sigma\in\partial \D$---for short, $\angle\lim_{z\to\sigma}h(z)=p$. That is, $\{h(z_n)\}$ converges at $p$ for every sequence $\{z_n\}\subset\D$ which converges non-tangentially to $\sigma$. 

For Riemann maps there is a strong connection between the first two cluster sets and principal parts/impressions of corresponding prime ends. Namely, if $h$ is univalent and $\Omega=h(\D)$ (see, {\sl e.g.}, \cite[Proposition~4.4.4 and Theorem~4.4.9]{BCDbook})
\[
\Gamma(h,\sigma)=I(\hat{f}(\underline{\sigma})), \quad \Gamma_N(h,\sigma)=\Pi(\hat{h}(\underline{\sigma})).
\]

The next lemma shows that $H$-cluster sets are actually independent of $R$. 
\begin{lemma}\label{lemma:H-lim-univ}
Let $h:\D\to \C$ be holomorphic. Then, for every $R, R'>0$
\[
\Gamma_H(h, \sigma, R)=\Gamma_H(h, \sigma, R').
\]
In particular,  $h$ has $H$-limit at $\sigma$ if and only if $\Gamma_H(h, \sigma, R)$ is a point for some---and hence any---$R>0$.
\end{lemma}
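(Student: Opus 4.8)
The plan is to show that any horosphere $E(\sigma,R)$ is contained in a bounded hyperbolic neighborhood of any other horosphere $E(\sigma,R')$ with the same center, and to exploit this together with the normality of bounded holomorphic families (or a direct Schwarz--Pick argument) to transfer convergence statements between the two. Without loss of generality take $\sigma=1$; it is also harmless to assume $R<R'$, since the reverse inclusion of cluster sets will follow by symmetry once one direction is established. The first step is the purely geometric observation that, writing $M=\tfrac12\log R$ and $M'=\tfrac12\log R'$, equation~\eqref{eq-limit-oro-eucl} identifies $E(1,R)$ as a Busemann sublevel set of the geodesic ray landing at $1$; hence for $z\in E(1,R')\setminus E(1,R)$ the hyperbolic distance from $z$ to the ``core'' horocycle $\partial E(1,R)$ is bounded above by the fixed constant $M'-M=\tfrac12\log(R'/R)$. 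Concretely: given $z_n\to 1$ with $z_n\in E(1,R')$, one picks the point $w_n$ on the geodesic ray through $z_n$ landing at $1$ (equivalently, slide $z_n$ toward $1$ along the Euclidean circle through $z_n$ tangent to $\partial\D$ at $1$) at which that geodesic exits $E(1,R)$; then $w_n\in E(1,R)$, $w_n\to 1$, and $k_\D(z_n,w_n)\le \tfrac12\log(R'/R)=:C$.

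The second step converts the bounded hyperbolic distance into control of $h$. Suppose $\Gamma_H(h,1,R)=\{p\}$, i.e.\ $h$ has $H$-limit $p$ along $E(1,R)$; we want the same along $E(1,R')$. Take $z_n\in E(1,R')$ with $z_n\to 1$, and the companions $w_n\in E(1,R)$ with $w_n\to 1$ and $k_\D(z_n,w_n)\le C$ from Step~1. By hypothesis $h(w_n)\to p$. To conclude $h(z_n)\to p$ one argues by contradiction: if not, pass to a subsequence with $h(z_n)\to q\ne p$ in $\C_\infty$. Post-composing with a Möbius transformation of $\C_\infty$ if necessary, we may assume $p,q$ are finite and $h$ is locally bounded along the relevant sequences; then the family $h_n(\zeta):=h(\varphi_n(\zeta))$, where $\varphi_n\in\Aut$ sends $0$ to $w_n$, is a normal family on a fixed hyperbolic ball $B_{k_\D}(0,C)$ (being, after the Möbius adjustment, uniformly bounded on this ball by the Schwarz--Pick-type estimate, or else one uses that omitted-value / Montel normality applies since $h$ omits at least two points when $\Omega\neq\C$; in the general holomorphic case one instead works directly with the oscillation bound below). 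Writing $\zeta_n:=\varphi_n^{-1}(z_n)$, we have $|\zeta_n|\le \tanh C<1$, so along a further subsequence $\zeta_n\to\zeta_\infty$ with $|\zeta_\infty|\le\tanh C$; since $h_n(0)=h(w_n)\to p$ and (by normality) $h_n\to h_\infty$ locally uniformly with $h_\infty\equiv p$ forced, we get $h(z_n)=h_n(\zeta_n)\to p$, contradicting $h(z_n)\to q$.

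The cleanest way to run Step~2 without worrying about whether $h$ is bounded is to use a uniform oscillation estimate: for any holomorphic $h:\D\to\C_\infty$ (values in the sphere with its spherical metric $\sigma_s$), the Schwarz--Pick inequality gives $\sigma_s(h(z),h(w))\le \tanh\big(k_\D(z,w)\big)\cdot\operatorname{diam}_{\sigma_s}(\C_\infty)$-type control only after normalization, so instead one notes that $h\colon\D\to\C_\infty$ is distance-nonincreasing for $k_\D$ and the spherical-type metric pulled back, hence $\sigma_s(h(z_n),h(w_n))$ need not be small — which is exactly why the normal-families/contradiction packaging of Step~2 is needed rather than a one-line estimate. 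The main obstacle is precisely this: a single horocyclic sequence can drift hyperbolically far within $E(1,R')$ from any fixed sequence in $E(1,R)$ is \emph{false} (that is the content of Step~1 and is what makes the lemma work), so the real work is the clean geometric proof that $k_\D(z,E(1,R))\le \tfrac12\log(R'/R)$ for $z\in E(1,R')$, using the additivity of $k_\D$ along the geodesic ray landing at $1$ together with \eqref{eq-limit-oro-eucl} and Remark~\ref{rem:sliding along geodesics}. Once that is in hand, the transfer of cluster sets is the routine normal-families argument sketched above, and the final sentence of the lemma — that $h$ has $H$-limit at $\sigma$ iff $\Gamma_H(h,\sigma,R)$ is a singleton for some (hence every) $R$ — is then immediate by unwinding Definition~\ref{Def:H-limits}.
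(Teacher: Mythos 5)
Your Step 1 (every point of the larger horodisc lies within hyperbolic distance $\tfrac12\log(R'/R)$ of the smaller one, by sliding along the geodesic ray landing at $\sigma$) is essentially the same geometric input as the paper's, which performs it in the half-plane by translating $w_n$ by the fixed amount $\tfrac{1}{R'}-\tfrac1R$. The real gap is in Step 2. The lemma asserts the equality of the \emph{full} $H$-cluster sets $\Gamma_H(h,\sigma,R)=\Gamma_H(h,\sigma,R')$, and this is what the paper uses afterwards (it is needed to define $\Gamma_H(h,\sigma)$ and, via Proposition~\ref{Prop:simil-equal}, to identify $\overline{E_\Omega(\gamma,M)}\cap\partial\Omega$ with the $H$-cluster set). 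You begin Step 2 with ``Suppose $\Gamma_H(h,1,R)=\{p\}$'', so at best you prove the ``in particular'' (singleton) clause; given $z_n\in E(1,R')$ with $h(z_n)\to q$, nothing in your argument produces a sequence in the smaller horodisc whose images converge to that same $q$ when the cluster sets are not singletons. The paper handles arbitrary cluster values with no normal families at all: after transferring to $\Ha$ and translating, it uses the Schwarz--Pick contraction of $k_\Ha$ into $k_{h(\D)}$, completeness of the hyperbolic distance, and the distance lemma to conclude that the translated images have the same boundary limit. Note that both arguments tacitly use that the cluster value lies on $\partial h(\D)$ (automatic when $h$ is univalent, the case needed in the paper); for a general holomorphic $h:\D\to\C$ the set equality is in fact false --- for $h(z)=\exp\bigl(-\tfrac{1+z}{1-z}\bigr)$ one computes $\Gamma_H(h,1,R)=\{p:|p|\le e^{-1/R}\}$, which depends on $R$ --- so the difficulty you run into in your last paragraph is not a technicality that a sharper oscillation estimate could remove.

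Even within the singleton case your Step 2 has two concrete holes. First, ``$h_\infty\equiv p$ forced'' does not follow from $h_n(0)=h(w_n)\to p$ alone: a limit of a normal family that equals $p$ at one point need not be constant. To force constancy you must have pointwise convergence on a set with an accumulation point, e.g.\ by pushing $w_n$ a fixed extra hyperbolic distance inside $E(1,R)$ so that a whole ball $\varphi_n(B)$ stays in $E(1,R)$; then $h(\varphi_n(\zeta))\to p$ for every $\zeta\in B$ by the singleton hypothesis, and Vitali plus the identity theorem give the constant limit. Second, normality of $\{h\circ\varphi_n\}$ is simply not available for an arbitrary holomorphic $h:\D\to\C$ (it need not be bounded nor omit two values), and your fallback paragraph acknowledges this without resolving it; for univalent $h$ the clean substitute is exactly the paper's distance-lemma argument. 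A minor slip in Step 1: sliding $z_n$ toward $1$ along the Euclidean circle tangent to $\partial\D$ at $1$ is motion along a horocycle, which keeps the Busemann level of \eqref{eq-limit-oro-eucl} constant and never enters $E(1,R)$; the correct move is the one you also state, along the geodesic ray landing at $1$.
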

\begin{proof}
We can assume that $R>R'$. It is clear that $\Gamma_H(h, \sigma, R')\subseteq\Gamma_H(h, \sigma, R)$ since $E(\sigma, R')\subset E(\sigma, R)$. We have thus to show that $\Gamma_H(h \sigma, R)\subseteq\Gamma_H(h, \sigma, R)$. To this aim, it is enough to show that 

\smallskip

{ ($\ast$)}  if  $\{z_n\}\subset E(\sigma, R)$ is such that $\{z_n\}\ra \sigma$ and $\{h(z_n)\}\ra p\in\C_\infty$ then there exists a sequence $\{\tilde z_n\}\subset E(\sigma, R')$  such that $\{\tilde z_n\}\ra\sigma$ and $\{h(\tilde z_n)\}\ra p\in\C_\infty$.

\smallskip

Let $h_\sigma:\D\to \Ha$ be the biholomorphism given by $h_\sigma(z)=\frac{\sigma+z}{\sigma-z}$. An explicit computation shows that for all $M>0$
\[
h_\sigma(E(\sigma, M))=\{w\in\C: \Re w>\frac{1}{M}\}.
\]
Let $g:=h\circ h_\sigma^{-1}:\Ha \to\C$. Hence, in order to prove ($\ast$) it is enough to prove that, given $\{w_n\}$ such that $\Re w_n>\frac{1}{R}$, $\{w_n\}\ra\infty$ and $\{g_n(w_n)\}\ra p$ then there exists $\{\tilde w_n\}$ such that $\Re \tilde w_n>\frac{1}{R'}$, $\{\tilde w_n\}\ra \infty$ and $\{g_n(\tilde w_n)\}\ra p$.


Let $\{w_n\}$ be a sequence as before. If there is a subsequence $\{w_{n_k}\}$ such that $\Re w_{n_k}>\frac{1}{R'}$, then we can take $\tilde w_k:=w_{n_k}$ and we are done. So we can assume that for all $n$,
\begin{equation}\label{Eq:inRR}
\frac{1}{R}<\Re w_n<\frac{1}{R'}
\end{equation}

  For all $n$, set $\tilde w_n:=w_n+\frac{1}{R'}-\frac{1}{R}$. Clearly, $\Re \tilde w_n>\frac{1}{R'}$ and  $\{\tilde w_n\}$ converges to $\infty$. We claim that $\{g(\tilde w_n)\}$ converges to $p$, and thus we are done. 
 
Let $\Omega:=g(\Ha)=f(\D)$. Assume that $\{g(\tilde w_n)\}$ converges, up to subsequences, to a point $q\in \overline{\Omega}$. Thus we have to show that $p=q$.

Since $g$ is holomorphic, it contracts the hyperbolic distance, and by \eqref{Eq:inRR} holds, using \eqref{Eq:Poinc-semip}   we have 
\[
k_\Omega(g(w_n), g(\tilde w_n))\leq k_\Ha(w_n, \tilde w_n)\leq \int_{\re w_n}^{\re \tilde w_n}\frac{1}{2\re t}dt\leq\frac{ R}{2}(\frac{1}{R'}-\frac{1}{R}).
\]

Therefore, the sequences $\{g(w_n)\}$ and $\{g(\tilde w_n)\}$ stay at finite hyperbolic distance in $\Omega$. Since $\{g(\tilde w_n)\}$ converges to $p\in\partial\Omega$ and $k_\Omega$ is complete, it follows that also $q\in\partial\Omega$. Therefore, by the so-called ``distance lemma'' (see for example \cite[Theorem~5.3.1]{BCDbook}) $p=q$.
\end{proof}

The previous lemma allows as to define the {\sl $H$-cluster set} of a holomorphic map $h:\D\to \C$ at $\sigma\in\partial \D$ as
\[
\Gamma_H(h, \sigma):=\Gamma_H(h, \sigma, R),
\]
where $R$ is any real positive number. 

Note that for all $R>0$ and $\sigma\in\partial\D$, the horocycle $E(\sigma, R)$ is an Euclidean disc with center $\frac{\sigma}{1+R}$ and radius $R/(1+R)$, contained in $\D$ and tangent to $\partial\D$ at $\sigma$. Hence, 
\[
\Gamma_N(f, \sigma)\subset \Gamma_H(f, \sigma)\subset \Gamma(f, \sigma).
\]
 Propositions~\ref{Thm:starlike}  and ~\ref{prop:Carsten}   imply that in general the previous sets are different.
 
Let now $A_R$ be the affine transformation which maps $\D$ onto $E(\sigma,R)$ such that $A_R(\sigma)=\sigma$. Therefore, if $h:\D\to \C$ is univalent and we let $h_R:\D\to \C$ be defined as $h_R:=h\circ A_R$, the previous discussion implies that for all $R>0$
\[
I(\hat{h_R}(\underline{\sigma}))=\Gamma_H(h, \sigma).
\]

More interesting for our aims is the following:

\begin{proposition}\label{Prop:simil-equal}
Let $h:\D\to\C$ be univalent and let $\Omega:=h(\D)$. Let $\sigma\in \partial\D$ and let $\gamma:[0,+\infty)\to\Omega$ be a geodesic ray such that $\gamma(t)$ converges to the prime end $\hat{h}(\underline{\sigma})$ in the Carath\'eodory topology of $\Omega$. Then for all $R>0$
\[
\overline{E_\Omega(\gamma, R)}\cap \partial\Omega=\Gamma_H(h, \sigma).
\]
\end{proposition}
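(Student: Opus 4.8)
The plan is to transport the statement to the unit disc through the Riemann map $h$, and then to recognise $\overline{E_\Omega(\gamma,R)}\cap\partial\Omega$ as a horocyclic cluster set of $h$. I first record the conformal invariance of the objects involved. The map $h:\D\to\Omega$ is a biholomorphism, hence an isometry of $(\D,k_\D)$ onto $(\Omega,k_\Omega)$, and it extends to a homeomorphism $\hat h:\overline{\D}\to\overline{\Omega}^C$ of Carath\'eodory compactifications with $\hat h(\underline{\sigma})$ equal to the prime end to which $\gamma$ converges. Therefore $\eta:=h^{-1}\circ\gamma$ is a geodesic ray of $\D$ which, in the topology of $\overline{\D}$, converges to $\sigma$; and, $h$ being an isometry, for every $z\in\D$
\[
\lim_{t\to+\infty}\big[k_\D(z,\eta(t))-k_\D(\eta(0),\eta(t))\big]=\lim_{t\to+\infty}\big[k_\Omega(h(z),\gamma(t))-k_\Omega(\gamma(0),\gamma(t))\big].
\]
By \eqref{Eq:def-oro} this means that $h^{-1}\!\big(E_\Omega(\gamma,R)\big)$ is precisely the Busemann horosphere of $\D$ determined by the geodesic ray $\eta$ with parameter $R$.

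I then identify that horosphere with a Euclidean horoball. The radial segment $\eta_0(r):=r\sigma$, $r\in[0,1)$, is a geodesic ray of $\D$ landing at $\sigma$, and \eqref{eq-limit-oro-eucl}, taken with base point $0=\eta_0(0)$, shows that the Busemann horosphere of $\D$ determined by $\eta_0$ with parameter $M$ is $E(\sigma,e^{2M})$. Since $\eta$ and $\eta_0$ converge to the same prime end $\underline{\sigma}$, Lemma~\ref{lemma:oro-in-simply-change} (applied in $\D$) yields $T\in\R$ with $h^{-1}\!\big(E_\Omega(\gamma,R)\big)=E(\sigma,R')$, where $R':=e^{2(R+T)}>0$. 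Hence $E_\Omega(\gamma,R)=h\big(E(\sigma,R')\big)$, and so $\overline{E_\Omega(\gamma,R)}\cap\partial\Omega=\overline{h\big(E(\sigma,R')\big)}\cap\partial\Omega$.

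It remains to check that $\overline{h\big(E(\sigma,R')\big)}\cap\partial\Omega=\Gamma_H(h,\sigma,R')$. For ``$\subseteq$'', let $p$ lie in the left-hand side and choose $z_n\in E(\sigma,R')$ with $h(z_n)\to p$; after passing to a subsequence, $z_n\to z_\ast\in\overline{E(\sigma,R')}$. Recall that $E(\sigma,R')$ is the Euclidean disc with centre $\sigma/(1+R')$ and radius $R'/(1+R')$, tangent to $\partial\D$ at $\sigma$, so that $\overline{E(\sigma,R')}\cap\partial\D=\{\sigma\}$; thus either $z_\ast=\sigma$, or $z_\ast\in\D$, and in the latter case $p=h(z_\ast)\in\Omega$ by continuity of $h$, contradicting $p\in\partial\Omega$. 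Hence $z_\ast=\sigma$, i.e.\ $z_n\to\sigma$ with $z_n\in E(\sigma,R')$ and $h(z_n)\to p$, so $p\in\Gamma_H(h,\sigma,R')$. For ``$\supseteq$'', if $p\in\Gamma_H(h,\sigma,R')$ take $z_n\in E(\sigma,R')$ with $z_n\to\sigma$ and $h(z_n)\to p$; then $p\in\overline{h(E(\sigma,R'))}$, and $p\notin\Omega$, since otherwise $p=h(w)$ for some $w\in\D$ and continuity of $h^{-1}$ on $\Omega$ would force $z_n\to w\in\D$, contradicting $z_n\to\sigma$. Combining this equality with Lemma~\ref{lemma:H-lim-univ}, which gives $\Gamma_H(h,\sigma,R')=\Gamma_H(h,\sigma)$, we obtain $\overline{E_\Omega(\gamma,R)}\cap\partial\Omega=\Gamma_H(h,\sigma)$ for every $R>0$.

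No step above is deep: the argument is essentially bookkeeping on the behaviour of Busemann horospheres under conformal maps, together with the tangency of the Euclidean horoballs $E(\sigma,R')$ to $\partial\D$. The only point that needs genuine care is the last step, where one must rule out that a boundary cluster value of $h$ attained along the horoball could be produced by a sequence of $\D$ not converging to $\sigma$; this is exactly where the tangency $\overline{E(\sigma,R')}\cap\partial\D=\{\sigma\}$ enters. One could alternatively shorten the computation by noting that $E_\Omega(\gamma,R)=h_{R'}(\D)$ with $h_{R'}:=h\circ A_{R'}$ in the notation introduced just before the statement, and then invoking the already established identity $I(\hat{h_{R'}}(\underline{\sigma}))=\Gamma_H(h,\sigma)$ together with the coincidence of impression and cluster set; the direct route above, however, seems the most transparent.
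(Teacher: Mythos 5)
Your proof is correct and follows essentially the same route as the paper: compare the given ray with the radial geodesic $r\mapsto h(r\sigma)$, use the identification $h(E(\sigma,\cdot))=E_\Omega(\cdot)$ coming from \eqref{eq-limit-oro-eucl}/\eqref{Eq:oro-in-disc-back}, invoke Lemma~\ref{lemma:oro-in-simply-change} to absorb the change of geodesic ray into a shift of the parameter, and conclude with Lemma~\ref{lemma:H-lim-univ}. The only difference is cosmetic: you apply Lemma~\ref{lemma:oro-in-simply-change} in $\D$ rather than in $\Omega$, and you spell out (via the tangency $\overline{E(\sigma,R')}\cap\partial\D=\{\sigma\}$) the identification $\overline{h(E(\sigma,R'))}\cap\partial\Omega=\Gamma_H(h,\sigma,R')$ that the paper states as clear.
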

\begin{proof}
Let $\eta:[0,1)\to \Omega$ be defined by $\eta(r):=h(r\sigma)$. The curve $\eta$ is a geodesic ray of $\Omega$ which converges to the prime end $\hat{h}(\underline{\sigma})$ in the Carath\'eodory topology of $\Omega$. By \eqref{Eq:oro-in-disc-back}, $h(E(\sigma, M))=E_\Omega(\eta, M)$ for all $M>0$. Hence, it is clear that for all $M>0$,
\[
\overline{E_\Omega(\eta, M)}\cap \partial\Omega=\Gamma_H(h, \sigma, M)=\Gamma_H(h, \sigma).
\]
The result follows then from Lemma~\ref{lemma:oro-in-simply-change}.
\end{proof}

\section{General statement and  proof of Theorem~\ref{Thm:main Intro}}\label{sec:proof}
In this section we prove the following  more general version of Theorem ~\ref{Thm:main Intro}

\begin{theorem}\label{Thm:main}
Let $\Omega\subsetneq \C$ be a simply connected domain. The following are equivalent:
\begin{enumerate}
\item $\Omega$ has the Denjoy-Wolff Property; \item every $g\in \hbox{Aut}_d(\Omega)$ has a Denjoy-Wolff point;
\item every parabolic $g\in \hbox{Aut}_d(\Omega)$ has a Denjoy-Wolff point;
\item for every geodesic ray $\gamma$ in $\Omega$  there exists $p\in\partial\Omega$  such that $\overline{E_\Omega(\gamma, M)}\cap \partial \Omega=\{p\}$ for  every $M\in \R$;
\item there exist $z_0\in\Omega$ such that for  every geodesic ray $\gamma$ in $\Omega$ with  $\gamma(0)=z_0$ there exists  $p\in\partial\Omega$ and $M\in \R$ such that $\overline{E_\Omega(\gamma, M)}\cap \partial\Omega=\{p\}$;
\item for every Riemann map $h:\D\to \Omega$, $h$ has $H$-limit at every $\sigma\in\partial\D$;
\item there exists a Riemann map $h:\D\to \Omega$, such that $h$ has $H$-limit at every $\sigma\in\partial\D$.
\end{enumerate}
\end{theorem}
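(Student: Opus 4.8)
The plan is to prove the cycle of implications $(1)\Rightarrow(2)\Rightarrow(3)\Rightarrow(6)\Rightarrow(7)\Rightarrow(1)$ together with $(6)\Rightarrow(4)\Rightarrow(5)\Rightarrow(6)$. Several of these are immediate: $(1)\Rightarrow(2)$ and $(2)\Rightarrow(3)$ since automorphisms are self-maps and parabolic automorphisms are automorphisms; $(6)\Rightarrow(7)$ trivially; and $(4)\Rightarrow(5)$ since ``for every $M$'' implies ``for some $M$'' (and any $z_0\in\Omega$ will do). The pair $(6)\Rightarrow(4)$ and $(5)\Rightarrow(6)$ is just Proposition~\ref{Prop:simil-equal} read in the two directions: every geodesic ray of $\Omega$ converges to a prime end $\hat h(\underline\sigma)$, and Proposition~\ref{Prop:simil-equal} identifies $\overline{E_\Omega(\gamma,M)}\cap\partial\Omega$ with $\Gamma_H(h,\sigma)$ for \emph{every} $M$; thus ``$\Gamma_H(h,\sigma)$ a single point for all $h$ and $\sigma$'' means exactly ``$\overline{E_\Omega(\gamma,M)}\cap\partial\Omega$ a single point for all geodesic rays $\gamma$'', and in $(5)$, fixing $z_0$ and taking $\gamma$ the unique geodesic ray from $z_0$ to $\hat h(\underline\sigma)$ shows that $\Gamma_H(h,\sigma)=\overline{E_\Omega(\gamma,R)}\cap\partial\Omega=\{p\}$.

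\medskip

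To prove $(7)\Rightarrow(1)$, fix a Riemann map $h$ having an $H$-limit at every point of $\partial\D$ and let $f\in\hbox{Hol}_d(\Omega,\Omega)$. Then $\psi:=h^{-1}\circ f\circ h\in\hbox{Hol}_d(\D,\D)$, so by the classical Denjoy--Wolff theorem $\psi^{\circ n}\to\tau\in\partial\D$ locally uniformly, and by Julia's Lemma $\psi(E(\tau,R))\subseteq E(\tau,R)$ for every $R>0$. Hence each orbit $\{\psi^{\circ n}(z)\}$ lies in a fixed horosphere $E(\tau,R_z)$ while tending to $\tau$, so $f^{\circ n}(h(z))=h(\psi^{\circ n}(z))\to L:=H\text{-}\lim_{w\to\tau}h(w)$; since $h$ is onto and $L\in\Gamma_H(h,\tau)\subseteq\Gamma(h,\tau)\subseteq\partial\Omega$, the sequence $\{f^{\circ n}\}$ converges pointwise on $\Omega$ to the constant $L\in\partial\Omega$. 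As $\Omega\subsetneq\C$ is simply connected, $\C\setminus\Omega$ has at least two points, so $\{f^{\circ n}\}$ is a normal family by Montel's theorem; pointwise convergence then upgrades to local uniform convergence and $f$ has Denjoy--Wolff point $L$.

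\medskip

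The core of the proof is $(3)\Rightarrow(6)$. Assume every parabolic automorphism of $\Omega$ has a Denjoy--Wolff point; fix a Riemann map $h$ and $\tau\in\partial\D$ and pass to the half-plane through $h_\tau(z)=\frac{\tau+z}{\tau-z}$, writing $G:=h\circ h_\tau^{-1}:\H\to\Omega$, so that the relevant prime end corresponds to $\infty$. For $b\in\R\setminus\{0\}$ the map $g_b:=G\circ\Phi_b\circ G^{-1}$ with $\Phi_b(w)=w+ib$ is a parabolic automorphism of $\Omega$, hence has a Denjoy--Wolff point $q(b)$; through $G$ this says precisely that $G(w+imb)\to q(b)$ uniformly for $w$ in compact subsets of $\H$. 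Using a compact strip of height $b$ and decomposing an imaginary part as a multiple of $b$ plus a remainder, one deduces that $G(w_n)\to q(b)$ for every sequence $w_n\to\infty$ with $\Re w_n$ bounded and $\Im w_n\to+\infty$; in particular the horocycle curve $t\mapsto G(c+it)$ has a limit as $t\to+\infty$, which cannot depend on $b>0$, so all the $q(b)$ ($b>0$) equal one point $q^+$, and symmetrically $G(c+it)$ has a limit $q^-$ as $t\to-\infty$. Pulling the curve $t\mapsto c+it$ back to $\D$, it lands at $\tau$, so Lindel\"of's theorem gives that $h$ has non-tangential limit $q^+$ at $\tau$, i.e.\ $\Gamma_N(h,\tau)=\{q^+\}$. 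Finally one must show that the full $H$-cluster set collapses too: splitting an arbitrary sequence in $E(\tau,R)$ tending to $\tau$ (after passing to a subsequence) into the non-tangential case, the case of bounded real part with imaginary part going to $\pm\infty$ in the $\H$-model, and the asymptotically tangential case with real part tending to $\infty$, one argues that the limit always lies in $\{q^+,q^-\}$; then $\Gamma_H(h,\tau)$ is a non-empty continuum contained in a finite set, hence a single point, and $h$ has $H$-limit at $\tau$. As $h,\tau$ were arbitrary, $(6)$ follows.

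\medskip

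\emph{Main obstacle.} The delicate part is the last portion of $(3)\Rightarrow(6)$. A parabolic orbit of $g_b$ is confined to one horocycle curve of $\Omega$, so such orbits seem unable to detect a failure of the $H$-limit that occurs only along sequences escaping into the remote part of the Busemann horoball. The mechanism that makes it work is the combination of (a) the Denjoy--Wolff point of $g_b$ --- which, because the iterates of $g_b$ are uniform translates in the half-plane model, forces \emph{every} horocycle curve to land --- with (b) Lindel\"of's theorem, which converts the landing of such a curve into the existence of a non-tangential limit of $h$. The point I expect to require the most care is ruling out extra points of $\Gamma_H(h,\tau)$ coming from asymptotically tangential sequences with $\Re w\to\infty$ in the half-plane model, i.e.\ showing that any non-degeneracy of $\Gamma_H(h,\tau)$ necessarily already manifests itself on the horocycle curves, where the parabolic dynamics can see it.
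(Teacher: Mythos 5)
Your overall scheme of implications, the use of Proposition~\ref{Prop:simil-equal} for the equivalence of (4), (5), (6), and your (7)$\Rightarrow$(1) via Julia's lemma and the definition of $H$-limit (a minor variant of the paper's Lemma~\ref{lemma:DW-abstract}) are all sound and essentially parallel to the paper. The problem is the core implication (3)$\Rightarrow$(6). Up to the landing of the vertical curves $t\mapsto G(c+it)$ and the non-tangential limit $q^+$ via Lehto--Virtanen/Lindel\"of you follow the paper; but the decisive step---showing that sequences in the horosphere $\{\Re w>1/R\}$ with $\Re w_n\to\infty$ and $\Im w_n/\Re w_n\to\pm\infty$ also send $G(w_n)$ to $q^{\pm}$---is precisely where you write ``one argues that the limit always lies in $\{q^+,q^-\}$'' and then, in your closing paragraph, concede that this is the unresolved obstacle. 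It is a genuine gap: the uniform-on-compacta convergence of the parabolic iterates only controls vertical translates of compact sets, hence sequences of \emph{bounded} real part, and the non-tangential limit only controls cones; neither sees a sequence like $w_n=n+in^2$. (Your fallback that $\Gamma_H(h,\tau)$ is a continuum inside a finite set presupposes exactly the containment in $\{q^+,q^-\}$ that is missing.) That some extra input is needed is shown by the example of Proposition~\ref{prop:Carsten}, where non-tangential limits exist everywhere and yet the $H$-limit fails.

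The paper closes this step with a topological argument absent from your proposal. Since both $\tilde h(R+it)\to p$ as $t\to+\infty$ and, by the non-tangential limit, $\tilde h(R+r)\to p$ as $r\to+\infty$, the image under $\tilde h$ of the boundary of the quadrant $U=\{\Re w>R,\ \Im w>0\}$ is a Jordan curve in $\C_\infty$; hence $\tilde h(U)$ is a Jordan domain and Carath\'eodory's extension theorem gives a homeomorphic extension of $\tilde h|_U$ to $\overline U$ with $\tilde h^{-1}(p)=\infty$. This forces $\tilde h(w_n)\to p$ for \emph{every} sequence in $U$ tending to $\infty$, tangential or not. Repeating the construction with $T^{-1}$ handles the lower quadrant with some limit $q$, and evaluating along the common horizontal ray $w_n=n$ gives $p=q$, so the $H$-limit exists. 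To complete your proof you would need to supply this Jordan-curve argument (or an equivalent device) at the flagged point.
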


In all this section, $\Omega\subsetneq\C$ is a simply connected domain. First of all, we translate the Denjoy-Wolff theorem in $\Omega$ in terms of the existence of invariant horospheres:

\begin{lemma}[Existence of invariant horospheres]\label{lemma:DW-abstract}
Let $f\in \hbox{Hol}_d(\Omega,\Omega)$. Then there exists a geodesic ray $\gamma$ in $\Omega$ such that for all $R>0$
\[
f(E_\Omega(\gamma, R))\subseteq E_\Omega(\gamma, R).
\]
Moreover, if $\eta$ is another geodesic ray in $\Omega$ such that $f(E_\Omega(\eta, R))\subseteq E_\Omega(\eta, R)$ for all $R>0$ then $\eta$ and $\gamma$ converge in the Carath\'eodory topology of $\Omega$ to the same prime end of $\Omega$. 

In particular, the limit set of $\{f^{\circ n}\}$ is contained in $\overline{E_\Omega(\gamma, R)}\cap \partial\Omega$.
\end{lemma}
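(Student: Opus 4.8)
The plan is to transfer the whole statement to the unit disc via a Riemann map and then invoke the classical Denjoy--Wolff theorem together with Wolff's lemma on invariant horospheres. Fix a Riemann map $h:\D\to\Omega$ and set $g:=h^{-1}\circ f\circ h$. Since $h$ is a biholomorphism, $g\in\hbox{Hol}_d(\D,\D)$, so by the classical Denjoy--Wolff theorem and Wolff's lemma (see, e.g., \cite{BCDbook}) there is a point $\tau\in\partial\D$ such that $g^{\circ n}\to\tau$ locally uniformly and $g(E(\tau,R))\subseteq E(\tau,R)$ for every $R>0$; moreover $\tau$ is the \emph{only} point of $\partial\D$ with this last invariance property. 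Put $\gamma(r):=h(r\tau)$, $r\in[0,1)$; by Subsection~\ref{sec:hyp} this is a geodesic ray of $\Omega$ starting at $z_0:=h(0)$ and converging in the Carath\'eodory topology of $\Omega$ to the prime end $\hat h(\underline\tau)$, and by \eqref{Eq:oro-in-disc-back} we have $h(E(\tau,e^{2M}))=E_\Omega(\gamma,M)$ for all $M\in\R$. Conjugating the inclusion $g(E(\tau,R))\subseteq E(\tau,R)$ by $h$ and using $f=h\circ g\circ h^{-1}$ then yields $f(E_\Omega(\gamma,R))\subseteq E_\Omega(\gamma,R)$ for every $R>0$, which is the first assertion.

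For the uniqueness of the prime end, let $\eta:[0,+\infty)\to\Omega$ be a geodesic ray with $f(E_\Omega(\eta,R))\subseteq E_\Omega(\eta,R)$ for all $R>0$. I would first renormalise $\eta$: if $\eta'$ denotes the geodesic ray issuing from $z_0$ that converges to the same prime end as $\eta$, then by Lemma~\ref{lemma:oro-in-simply-change} one has $E_\Omega(\eta',M)=E_\Omega(\eta,M+T)$ for some $T\in\R$, so $\eta'$ inherits the horosphere invariance. Now $\eta'(r)=h(r\sigma)$ for some $\sigma\in\partial\D$, and \eqref{Eq:oro-in-disc-back} turns the invariance of the $E_\Omega(\eta',R)$ under $f$ into $g(E(\sigma,R))\subseteq E(\sigma,R)$ for all $R>0$. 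By the uniqueness clause of Wolff's lemma, $\sigma=\tau$, so $\eta$ converges to $\hat h(\underline\tau)$, the same prime end as $\gamma$.

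Finally, for the assertion on the limit set, fix $z\in\Omega$ and set $w:=h^{-1}(z)$. Since $\bigcup_{R>0}E(\tau,R)=\D$, there is $R_z>0$ with $w\in E(\tau,R_z)$, and the invariance $g(E(\tau,R_z))\subseteq E(\tau,R_z)$ gives $g^{\circ n}(w)\in E(\tau,R_z)$ for all $n$; as $g^{\circ n}(w)\to\tau$, every limit point of $\{f^{\circ n}(z)\}=\{h(g^{\circ n}(w))\}$ lies in the $H$-cluster set $\Gamma_H(h,\tau,R_z)$. By Lemma~\ref{lemma:H-lim-univ} this set equals $\Gamma_H(h,\tau)$, and by Proposition~\ref{Prop:simil-equal} (applied to the geodesic ray $\gamma$) it equals $\overline{E_\Omega(\gamma,R)}\cap\partial\Omega$ for every $R>0$; this proves the last statement.

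The disc-to-domain transfers are routine. The point requiring the most care --- and what I regard as the crux --- is the uniqueness of the prime end: one cannot compare the horosphere families of $\gamma$ and $\eta$ directly, because they are based at different points, so one must first renormalise via Lemma~\ref{lemma:oro-in-simply-change} and only then appeal to the uniqueness of the Wolff point in $\D$. A related subtlety is that the limit-set inclusion is claimed for \emph{every} $R>0$ even though the iterates of $f$ need not eventually enter every horosphere (as happens for parabolic automorphisms); this is precisely why the argument is routed through the $R$-independence of the $H$-cluster set (Lemma~\ref{lemma:H-lim-univ}) rather than through horosphere invariance alone.
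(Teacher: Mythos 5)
Your proof is correct and follows essentially the same route as the paper: conjugate to $\D$ via a Riemann map, invoke the classical Denjoy--Wolff theorem and Wolff's horosphere invariance, define $\gamma(r)=h(r\tau)$, and use \eqref{Eq:oro-in-disc-back} together with Lemma~\ref{lemma:oro-in-simply-change} to reduce the uniqueness of the prime end to the uniqueness of the Wolff point in $\D$. The only difference is that you spell out the ``in particular'' clause on the limit set via Lemma~\ref{lemma:H-lim-univ} and Proposition~\ref{Prop:simil-equal}, a step the paper leaves implicit, and your argument for it is correct.
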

\begin{proof}
Let $h:\D\to \Omega$ be a Riemann map. Hence, $\tilde f:=h^{-1}\circ f\circ h\in \hbox{Hol}_d(\D,\D)$. By the Denjoy-Wolff theorem, there exists a unique point $\tau\in\partial\D$ such that $\tilde f(E(\tau,R))\subseteq E(\tau, R)$ for all $R>0$. Let $\gamma:[0,1)\to\Omega$ be the geodesic ray defined by $\gamma(r):=h(r\tau)$. Hence, by \eqref{Eq:oro-in-disc-back}, $f(E_\Omega(\gamma, R))\subseteq E_\Omega(\gamma, R)$ for all $R>0$. 

Note that $\gamma$ converges  in the Carath\'eodory topology of $\Omega$ to the prime end $\hat{h}(\underline{\tau})$. If $\eta$ is any geodesic ray in $\Omega$ such that $f(E_\Omega(\eta, R))\subseteq E_\Omega(\eta, R)$ for all $R>0$, then $\tilde f(E_\D(h^{-1}\circ \eta, R))\subseteq E_\D(h^{-1}\circ \eta, R)$ for all $R>0$. 

Let $\sigma\in\partial\D$ be such that $h^{-1}\circ \eta$ converges to $\sigma$---or, equivalently, $\eta$ converges  in the Carath\'eodory topology of $\Omega$ to the prime end $\hat{h}(\underline{\sigma})$. Let $\eta_\sigma(r):=r\sigma$, $r\in [0,1)$. Hence, by \eqref{eq-limit-oro-eucl}, for all $R>0$, $E_\D(\eta_\sigma, R)=E(\sigma, R)$. Thus, by \eqref{Eq:oro-cambia}, there exists $T>0$ such that $E_\D(h^{-1}\circ \eta, R)=E(\sigma,T+R)$ for all $R>0$. Therefore, $\tilde f(E(\sigma, R))\subseteq E(\sigma, R)$ for all $R>0$ and by the Denjoy-Wolff theorem $\sigma=\tau$. Hence, $\eta$ converges  in the Carath\'eodory topology of $\Omega$ to the prime end $\hat{h}(\underline{\tau})$.
\end{proof}

Now, we are ready to prove Theorem~\ref{Thm:main}.

\begin{proof}[Proof of Theorem~\ref{Thm:main}]
Trivially, (1) implies (2) and (2) implies (3).

Also, (6) and (7) are equivalent since Riemann mappings are unique up to pre-composition with automorphisms of $\D$. While, (4), (5) and (6) are equivalent by Proposition~\ref{Prop:simil-equal}. 
Also, (5) implies (1) via Lemma~\ref{lemma:DW-abstract}.

Hence, it is enough to show that (3) implies (6). 

Let $h:\D\to \Omega$ be a Riemann map. Let $\sigma\in\partial\D$. We have to show that $h$ has $H$-limit at $\sigma$. 

As in the proof of Lemma~\ref{lemma:H-lim-univ}, we can  work in $\H$ by  considering the  biholomorphism $h_\sigma:\D\to\Ha$ defined by $h_\sigma(z)=\frac{\sigma+z}{\sigma-z}$. Let $\tilde h:=h\circ h_\sigma^{-1}:\Ha\to \Omega$. Hence, we have to show that there exists $p\in\partial\Omega$ such that for every fixed $R>0$ and every sequence $\{w_n\}$ with $\Re w_n\geq R$ and $|w_n|\to \infty$ then $\{\tilde h(w_n)\}$ converges to $p$.

Let $T:\Ha\to \Ha$ be the automorphism defined by $T(w)=w+i$. Note that $T$ is a parabolic automorphism of $\Ha$ (that is, $\lim_{n\to\infty} \frac{k_\Ha(g^{\circ n}(w_0), w_0)}{n}=0$ for all $w_0\in \Ha$). Hence, $g:= \tilde h  \circ T \circ \tilde h^{-1}$ is a parabolic automorphism of $\Omega$. 

By hypothesis (3), there exists $p\in\partial\Omega$ such that $\{g^{\circ n}\}$ converges uniformly on compacta to $p$.  Let $z_0\in \R\cap\H$. Hence, $\{g^{\circ n}(\tilde h(z_0))\}$ converges to $p$. That is, 
\[
\lim_{n\to \infty}\tilde h(z_0+in)=p.
\]
We claim that 
\begin{equation}\label{Eq:lim1}
\lim_{t\to +\infty}\tilde h(z_0+it)=p.
\end{equation}
 Indeed, since the hyperbolic discs in $\Ha$ are Euclidean discs ($\Ha$ is convex),  it follows immediately that for every $z,w,u\in \Ha$ and $s\in[0,1]$
\[
k_\Ha(z, sw+(1-s)u)\leq \max \{k_\Ha(z, w), k_\Ha(z, u)\}.
\]
Hence, denoting by $[t]$ the integer part of a real number $t$, 
\begin{equation*}
\begin{split}
k_\Omega(\tilde h(R+it), \tilde h (R+i[t]))&=k_\Ha(R+it, R+i[t])\\& \leq k_\Ha(R+i[t+1], R+i[t])=k_\Ha(R+i, R),
\end{split}
\end{equation*}
where, for the last equality we used that $w\mapsto w+i[t]$ is an automorphism of $\Ha$ for $t\geq 0$ fixed.

Since $\tilde h$ is proper and $R+it\to \infty$ for $t\to+\infty$, by the distance lemma (see, {\sl e.g.}, \cite[Theorem~5.3.1]{BCDbook}), $\lim_{t\to+\infty} \tilde h(R+it)=p$. 

By Lehto-Virtanen's theorem (see, {\sl e.g.} \cite[Theorem~3.3.1]{BCDbook}), $\tilde h $ has non-tangential limit $p$ at $\infty$. In particular, 
\begin{equation}\label{Eq:lim2}
\lim_{r\to+\infty}\tilde h(R+r)=p.
\end{equation}
Let $\Gamma$ be the Jordan curve given by $\overline{(R+i[0,+\infty))}\cup \overline{(R+[0,+\infty))}$ and let $U$ be the connected domain bounded by $\Gamma$ which contains $R+1+i$. Note that $\overline{U}\setminus\{\infty\}\subset\Ha$. 

By \eqref{Eq:lim1} and \eqref{Eq:lim2}, $\tilde h(\Gamma)$ is a Jordan curve in $\C_\infty$ and it is clear that one of the connected component of $\C_\infty\setminus \tilde h(\Gamma)$ is $\tilde h(U)$. Therefore, $\tilde h|_U:U\to \tilde h (U)$ is a biholomorphism and, since $U$ and $\tilde h (U)$ are Jordan domain, by Carath\'eodory's extension theorem, $\tilde h|_U$ extends as a homeomorphism $\overline{U} \to \overline{\tilde h(U)}$ that we still denote by $\tilde h$. Since $\tilde{h}^{-1}(p)=\infty$, it follows that $\lim_{n\to +\infty} \tilde h(w_n)=p$ for every $\{w_n\}\subset U$ (that is, $\Re w_n\geq R$ and $\Im w_n\geq 0$)  converging to $\infty$. 

In order to end the proof, we repeat the same construction replacing $T$ with $T^{-1}$ (that is, the parabolic automorphism $w\mapsto w-i$) and we show that there exists $q\in\partial\Omega$ such that $\lim_{n\to +\infty} \tilde h (w_n)=q$ for all $\{w_n\}$ converging to $\infty$ such that $\Re w_n\geq R$ and $\Im w_n\leq 0$. But then, taking $w_n=n$, we see that $p=q$ and we are done.
\end{proof}

\section{Non-equivalence of limits: Construction of examples}

\subsection{H-limits are weaker than unrestricted limits: proof of Propositon~\ref{Thm:starlike}}

In this section we construct an example of a simply connected domain which is not locally connected, yet H-limits exist for any Riemann map (See Figure~\ref{fig1}). This shows that the two conditions are indeed non equivalent. 

 As a notation,  for $a\in \R$, we consider the horizontal line
\[
L_{a}:=\{z\in \C: \Re z \leq 0, \Im z=a\}.
\]
Let $\mathcal A:=\{a_n\}$ be a decreasing sequence of positive real numbers converging to $0$. Let
\begin{equation}\label{Eq:D-doman}
D:=\{z\in \C: -1<\Re z<1, -1<\Im z<1\}\setminus \left(L_0\cup \bigcup_{n\in \N}L_{\pm a_n}\right).
\end{equation}

\begin{figure}[hbt!]
\centering
\setlength{\unitlength}{0.9\textwidth}
\includegraphics[width=0.9\textwidth]{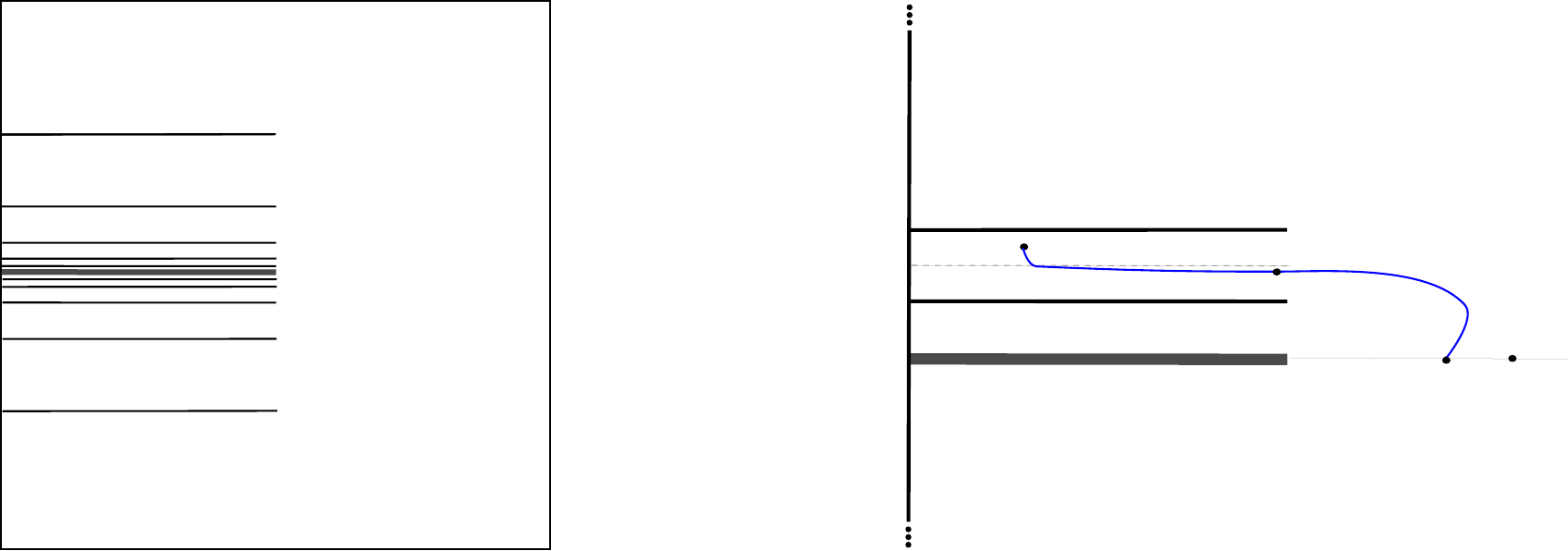}
\put(-0.2,0.19){\small $q_n$}
\put(-0.38,0.19){\small $z_n$}
\put(-0.37,0.09){\small $-k$}
\put(-0.21,0.09){\small $-h$}
\put(-0.09,0.09){\small $r$}
\put(-0.06,0.09){\small $r_0$}
\caption{\small \label{fig1} An illustration of the domain $D$ and of the proof of Proposition~\ref{Prop:good-domain}}
\end{figure}

For $n\geq 1$ let $S_n\subset D$ be the rectangles 
\begin{equation}\label{eq:rectangles}
S_n:=\{ -1<\re z<0,\ a_{n+1}<\Im z<a_{n} \},
\end{equation}
and let $\epsilon_n:=(a_{n}-a_{n+1})/2$ be the halfwidth of these rectangles. Let $S^\ast_n$ be the reflection of $S_n$ with respect to the real axis; then $S^\ast_n$ has the same half width as $S_n$.
 For $n$ which tends to infinity, $\epsilon_n$ tends to $0$.

Note that $D$ is simply connected and bounded and not locally connected at any point $-k\in\R_-\cap D$. Moreover, using Carath\'eodory's prime end theory, it is easy to see that the principal part of every prime end of $D$ is a point. Also, there is a prime end $\underline{\xi_0}\in\partial_C D$, represented by the null-chain $(C_n)$ with 
\begin{equation}\label{Eq:special-null}
C_n:=\{z\in\C: |z|=a_n, \Re z\geq 0\}
\end{equation}
whose principal part is $\{0\}$ and whose impression is $\{z\in \C: \Im z\in [0,1], \Re z=0\}$. While, for all $\underline{\xi}\in \partial_C D\setminus \underline{\xi_0}$ the impression of $\sigma$ is a point. 

We will show:

\begin{proposition}\label{Prop:good-domain}
Let $D\subset \C$ be the domain defined in \eqref{Eq:D-doman}, $r_0\in(0,1)$. Then   for  every geodesic ray $\gamma$ in $D$ such that $\gamma(0)=r_0$ there exists  $p_\gamma\in\partial D$ such that $\overline{E_D(\gamma, 0)}\cap \partial D=\{p_\gamma\}$.\end{proposition}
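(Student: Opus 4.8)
The plan is to verify condition (5) of Theorem~\ref{Thm:main} directly for the domain $D$, by showing that every Busemann horosphere based at the point $r_0$ has closure meeting $\partial D$ in a single point. By Remark~\ref{rem:symmetric domains}, since $D$ is symmetric with respect to the real axis and $r_0\in D\cap\R$, we may choose the Riemann map $h:\D\to D$ with $h(0)=r_0$ and $h(z)=\overline{h(\overline z)}$; the geodesic rays starting at $r_0$ correspond to $\gamma_\sigma(r)=h(r\sigma)$ for $\sigma\in\partial\D$, and the horosphere $E_D(\gamma_\sigma,0)$ is the image under $h$ of a horocycle $E(\sigma,R_0)$ for a fixed radius $R_0$ depending only on the normalization (here $R_0=1$ since $\gamma(0)=r_0$ gives $L(\gamma)=0$ in the notation of Lemma~\ref{lemma:oro-in-simply-change}). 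Equivalently, by Proposition~\ref{Prop:simil-equal}, the statement to prove is that $\Gamma_H(h,\sigma)$ is a single point for every $\sigma\in\partial\D$, i.e. that $h$ has $H$-limit everywhere.

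The key observation is that the only prime end of $D$ whose impression is not a single point is $\underline{\xi_0}$, with impression the vertical segment $\{\Re z=0,\ \Im z\in[0,1]\}$ and principal part $\{0\}$. For every other boundary prime end, the impression is a point, so the (unrestricted) cluster set $\Gamma(h,\sigma)$ is a point and hence $\Gamma_H(h,\sigma)\subseteq\Gamma(h,\sigma)$ is automatically that same point. Thus the entire content of the proposition is concentrated at the single direction $\sigma_0\in\partial\D$ with $\hat h(\underline{\sigma_0})=\underline{\xi_0}$; by the symmetry and the fact that the principal part $\{0\}$ is real and is the landing point of the real geodesic, one has $\sigma_0\in\partial\D\cap\R$, and in fact $\gamma_{\sigma_0}$ is (a sub-ray of) the negative real axis portion of $D$ together with the slit structure — more precisely, the non-tangential limit of $h$ at $\sigma_0$ is $0$. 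What must be shown is that a sequence $z_n\to\sigma_0$ lying in a fixed horocycle $E(\sigma_0,R_0)$ cannot have $h(z_n)$ escaping up the vertical impression segment; it must still converge to $0$.

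To prove this I would work in the right half-plane model as in the proof of Theorem~\ref{Thm:main}: set $\tilde h=h\circ h_{\sigma_0}^{-1}:\H\to D$, where $h_{\sigma_0}(z)=\frac{\sigma_0+z}{\sigma_0-z}$, so that $h_{\sigma_0}(E(\sigma_0,R_0))=\{\Re w>1/R_0\}$ and the claim becomes: for every $R>0$, if $\{w_n\}\subset\H$ with $\Re w_n\geq R$ and $|w_n|\to\infty$, then $\tilde h(w_n)\to 0$. The natural strategy is a hyperbolic-geometry estimate combined with the geometry of the slits $L_0$, $L_{\pm a_n}$. The crucial metric input is Lemma~\ref{lem:hyp metric on simply connected}: near the impression segment the Euclidean distance to $\partial D$ is tiny (the slits $L_{\pm a_n}$ accumulate onto $L_0$, so any point close to the vertical segment $\{\Re z=0, 0<\Im z<1\}$ but with negative real part lies in a thin rectangle $S_n$ or $S_n^\ast$ of half-width $\epsilon_n\to 0$), whereas a point in a fixed horocycle stays a bounded hyperbolic distance from the real geodesic ray in $\H$, whose image under $\tilde h$ runs out to $0$. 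Quantitatively: a point $w$ with $\Re w\geq R$ satisfies $k_\H(w,\Re w)\leq \tfrac12\log(\text{something bounded})$ only if $|\Im w|/\Re w$ is bounded, which is exactly non-tangential — so the subtle points are those $w$ with $\Re w$ bounded below by $R$ but $|\Im w|$ large. For such $w$, $k_\H(w, R+|\Im w|) \le \tfrac12\log\frac{|\Im w|}{R}+O(1)$ is \emph{not} bounded, so one cannot directly reduce to the real axis; instead one uses that $\tilde h(R+t)\to 0$ as $t\to+\infty$ (Lehto--Virtanen / the fact that $0$ is the landing point), and that the hyperbolic distance in $D$ from $\tilde h(w)$ to $\tilde h(R+|w|)$ is controlled by $k_\H(w,R+|w|)$, together with the fact that a point at bounded hyperbolic $D$-distance from a point very close to $0$ must itself be close to $0$ \emph{unless} it can ``escape sideways'' — and escaping sideways is blocked because to move from near $0$ to a point far up the segment $\{\Re z=0\}$ without large hyperbolic cost would require passing through the region between consecutive slits, which is hyperbolically very long since those channels have width $\epsilon_n\to 0$. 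Making this last point rigorous — an explicit lower bound on $k_D$ showing that the ``vertical channel'' along the impression segment is hyperbolically infinitely far from $0$ as one goes up, while a fixed horocycle only reaches bounded distance — is the main obstacle, and I expect the author's proof (referring to Figure~\ref{fig1} with the points $q_n, z_n, -k, -h, r$) to carry out exactly such a channel-length estimate: comparing $k_D$ from below with the hyperbolic distance in the thin rectangles $S_n$, whose moduli blow up as $n\to\infty$, to conclude that any $z_n\in E_D(\gamma_{\sigma_0},0)$ must in fact have $h^{-1}(z_n)$ approaching $0$ along a non-tangential-like region inside $D$, forcing $z_n\to 0$.
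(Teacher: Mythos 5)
Your reduction is the same as the paper's: by symmetry choose the Riemann map $h$ with $h(0)=r_0$, observe that for every boundary direction other than the single prime end with non-degenerate impression the unrestricted cluster set is already a point, so the whole question lives at one $\sigma_0$; and the heuristic you invoke (the channels between consecutive slits have half-width $\epsilon_n\to 0$, so traversing them is hyperbolically expensive) is indeed the engine of the paper's argument. But the proposal stops exactly where the proof has to start: you yourself label the decisive step ``the main obstacle'' and defer it. What is missing is the quantitative exclusion of a point of a \emph{fixed} Busemann horosphere sitting at depth $\Re z=-k$ inside a channel $S_n$ for infinitely many $n$. The route you sketch (compare $\tilde h(w)$ with $\tilde h(R+|\Im w|)$, which tends to $0$, and argue that a point at controlled distance from a point near $0$ cannot ``escape sideways'') does not close: as you concede, $k_\H(w,R+|\Im w|)\approx \tfrac12\log(|\Im w|/R)$ is unbounded, and there is no a priori relation between this budget and the width $\epsilon_n$ of the channel in which the image lies. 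More importantly, the difficulty is not ``huge cost versus bounded budget'': being in the horosphere while deep in a channel forces, via Lemma~\ref{Lem:stima-down} (the good-box result of \cite{BCDG}, which places a point $q_n$ of the geodesic from $z_n$ to $r$ near the channel mid-line at depth $-h$) and a strip comparison, a lower bound of order $(k-h)/(8\epsilon_n)$, while the horosphere condition only yields an upper bound of the \emph{same} order $1/\epsilon_n$, namely $k_D(q_n,r_0)\le 2h/\epsilon_n+(\sinh)^{-1}(2r_0/\epsilon_n)+O(1)$ from Lemma~\ref{Lem:stima-up}. The contradiction in the paper is a competition of constants: after cancelling the $k_D(q_n,r)$ terms, multiplying by $\epsilon_n$ and letting $n\to\infty$, the choice $h<k/17$ gives $\tfrac{k-h}{8}\le 2h$, which is absurd. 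Nothing in your outline produces, or even anticipates, this balance; without it the argument is a plan, not a proof.

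A smaller but real issue is that your geometric picture of the danger is off. The impression of the exceptional prime end is the horizontal segment $[-1,0]$ (the deep ends of the thin channels), not the vertical segment $\{\Re z=0,\ \Im z\in[0,1]\}$ --- points $iy$ with $0<y<1$, $y\neq a_n$, are interior to $D$ --- so what must be ruled out is accumulation of the horosphere at some $-s$ with $s>0$, i.e. images escaping \emph{leftwards} into the channels, exactly the $z_n,q_n,-k,-h$ configuration of Figure~\ref{fig1}. (You inherited the vertical-segment description from a slip in the paper's text, but organizing the argument around it obscures what actually has to be estimated.) Finally, the assertion that ``a point in a fixed horocycle stays a bounded hyperbolic distance from the real geodesic ray'' is false even as motivation: horospheres are hyperbolically unbounded, which is precisely why the Busemann-function formulation and the cancellation along the geodesic through $q_n$ are needed.
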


Hence, the Proposition~\ref{Prop:good-domain} implies that $\Omega$ satisfies condition (4) of Theorem~\ref{Thm:main}. Therefore, Proposition~\ref{Thm:starlike}  follows.
In order to prove Proposition~\ref{Prop:good-domain} we need some technical lemmas.

\begin{lemma}\label{Lem:stima-up}
Suppose that $a, a'\in \R$ and $a'> a$.  Let $\Omega\subsetneq \C$ be a simply connected domain such that $L_{a}, L_{a'}\subseteq \C\setminus\Omega$. Let $h>0$ and set $\epsilon:=\frac{a'-a}{2}$. 

Let  $q\in\Omega$ with $\Re q=-h$ and set $\theta:=\Im q-\epsilon-a$. Clearly $\theta\in(-\epsilon,+\epsilon)$. Suppose that $q+t\in \Omega$ for all $t\geq 0$.  Let $r_0>a$. Then
\[
k_\Omega(q, r_0+i\;\Im q)\leq \frac{h}{\epsilon-|\theta|}+(\sinh)^{-1}\left(\frac{r_0}{\epsilon-|\theta|} \right).
\]
\end{lemma}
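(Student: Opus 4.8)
The plan is to bound the hyperbolic distance $k_\Omega(q, r_0 + i\,\Im q)$ from above by constructing an explicit competitor curve and estimating its hyperbolic length, using monotonicity of the hyperbolic metric together with the upper bound in Lemma~\ref{lem:hyp metric on simply connected}. The key geometric observation is that the horizontal strip $\Sigma := \{z : a < \Im z < a'\}$ lies inside $\Omega$ near the relevant portion of the plane, or at least that the part of $\Omega$ we traverse is contained in the complement of $L_a \cup L_{a'}$, so that $\dist(z, \partial\Omega) \ge \dist(z, L_a \cup L_{a'})$ for points with $\Re z \le 0$, while for $\Re z \ge 0$ we can use that $q + t \in \Omega$ for all $t \ge 0$.

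\emph{First step: choose the path.} I would connect $q = -h + i\,\Im q$ to $r_0 + i\,\Im q$ by the horizontal segment $\sigma(t) = t + i\,\Im q$, $t \in [-h, r_0]$. Since $q + t \in \Omega$ for all $t \ge 0$ by hypothesis, the whole segment lies in $\Omega$ once we check the portion $t\in[-h,0]$ also stays in $\Omega$; this is where I would invoke that $L_a, L_{a'} \subseteq \C \setminus \Omega$ and that $\Im q = \epsilon + a + \theta \in (a, a')$, so the segment stays strictly between the two forbidden lines (for the negative-real-part part) — actually the cleanest route is to split the path at $\Re z = 0$: on $[-h,0]$ the distance to $\partial\Omega$ is at least $\min\{|\Im q - a|, |\Im q - a'|\} = \epsilon - |\theta|$ because $L_a$ and $L_{a'}$ are horizontal lines sitting at heights $a$ and $a' = a + 2\epsilon$ with $\Im q$ at height $\epsilon + a + \theta$; on $[0, r_0]$ the relevant bound is $\dist(z,\partial\Omega) \ge \Re z + (\epsilon - |\theta|)$, since both the vertical distance $\epsilon - |\theta|$ to the lines and the horizontal distance contribute (the nearest boundary point along the lines $L_a, L_{a'}$ is at $\Re z = 0$).

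\emph{Second step: integrate.} Using $\kappa_\Omega(z; v) \le |v|/\dist(z,\partial\Omega)$, the length of $\sigma$ is at most
\[
\int_{-h}^{0} \frac{dt}{\epsilon - |\theta|} \;+\; \int_{0}^{r_0} \frac{dt}{t + (\epsilon - |\theta|)} \;=\; \frac{h}{\epsilon - |\theta|} \;+\; \log\!\left(\frac{r_0 + (\epsilon-|\theta|)}{\epsilon-|\theta|}\right),
\]
and since $\log\big(1 + x\big) \le (\sinh)^{-1}(x) = \log\big(x + \sqrt{x^2+1}\big)$ for $x \ge 0$, the second term is bounded by $(\sinh)^{-1}\!\big(r_0/(\epsilon-|\theta|)\big)$, which gives exactly the claimed estimate. (Alternatively, one integrates the sharper lower bound $\dist(z,\partial\Omega) \ge \sqrt{(\Re z)^2 + (\epsilon-|\theta|)^2}$ valid for $\Re z \ge 0$ when the boundary near $L_a, L_{a'}$ terminates at the imaginary axis, which produces the $(\sinh)^{-1}$ term directly; I expect the authors to do it this way since the $\sinh^{-1}$ is suspiciously exact.)

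\emph{Main obstacle.} The only delicate point is justifying the distance estimate $\dist(z, \partial\Omega) \ge \sqrt{(\Re z)^2 + (\epsilon - |\theta|)^2}$ for points $z$ on the segment with $\Re z \ge 0$: one must argue that no part of $\partial\Omega$ comes closer than this. Here the hypothesis that $\Omega$ is simply connected with $L_a, L_{a'} \subseteq \C\setminus\Omega$ is not by itself enough — one needs that the segment $[0, r_0 + i\,\Im q]$ together with the assumption $q + t \in \Omega$ for all $t \ge 0$ forces the boundary to stay away. I would handle this by noting that the two horizontal half-lines $L_a, L_{a'}$ at heights differing by $2\epsilon$ from each other, with $\Im q$ sandwiched between them at vertical gaps $\epsilon+\theta$ and $\epsilon-\theta$ on the two sides, are the binding constraints, and any boundary point of $\Omega$ not on these lines but in the region $\Re z \ge 0$ would have to be "behind" $L_a$ or $L_{a'}$ from the point of view of the segment — a planar topology argument using simple connectivity. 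This is the step I would write most carefully.
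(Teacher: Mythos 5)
Your proposal follows essentially the same route as the paper: join $q$ to $r_0+i\,\Im q$ by the horizontal segment, estimate $\kappa_\Omega$ from above by $|v|/\dist(\cdot,\partial\Omega)$ via Lemma~\ref{lem:hyp metric on simply connected}, and integrate, using $\dist\geq \epsilon-|\theta|$ on the left portion and $\dist\geq\sqrt{t^2+(\epsilon-|\theta|)^2}$ on the right portion; the paper does exactly your ``alternative'' computation, which produces the $(\sinh)^{-1}$ term directly. Two corrections of detail. First, no separate containment argument is needed for the portion with $\Re z\le 0$: the whole segment is $\{q+s:\ 0\le s\le r_0+h\}$, so it lies in $\Omega$ directly from the hypothesis $q+t\in\Omega$ for all $t\ge 0$. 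Second, your first variant of the right-hand estimate, $\dist(z,\partial\Omega)\ge \Re z+(\epsilon-|\theta|)$, is not available: since $L_{a},L_{a'}\subseteq\C\setminus\Omega$, one always has $\dist(z,\partial\Omega)\le\dist(z,L_a\cup L_{a'})=\sqrt{(\Re z)^2+(\epsilon-|\theta|)^2}$ for $\Re z\ge0$, which is strictly smaller than $\Re z+(\epsilon-|\theta|)$; only the $\sqrt{\phantom{x}}$ version can be used, as in the paper.

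Concerning what you call the main obstacle: your instinct is right that this is the only delicate point, but your proposed resolution cannot work. The inequality you state in the opening paragraph, $\dist(z,\partial\Omega)\ge\dist(z,L_a\cup L_{a'})$, is reversed in general, and simple connectivity does not force the rest of $\partial\Omega$ to stay away from the segment: for instance a thin horizontal strip $\{|\Im z-\Im q|<\delta\}$ with $\delta<\epsilon-|\theta|$ satisfies every stated hypothesis, while the hyperbolic distance between the two points grows like $\pi(h+r_0)/(4\delta)$, so no bound independent of the domain can be derived from the stated hypotheses alone. The paper's own proof does not supply such an argument either: it simply asserts that $\dist(\partial\Omega,\eta(t))$ equals $\epsilon-|\theta|$, respectively $\sqrt{t^2+(\epsilon-|\theta|)^2}$, which is legitimate only because the lemma is applied to the domain $D$ of \eqref{Eq:D-doman}, where the portions of $\C\setminus D$ near the segment are precisely the two half-lines (the sides of the square staying at definite distance). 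So the honest way to close your gap is not a planar-topology argument from simple connectivity, but to add (or verify in the application, as is implicitly done in the paper) the hypothesis that no other part of $\partial\Omega$ comes closer to the segment than $L_a\cup L_{a'}$ do.
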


\begin{proof} Let $y_0:=\theta+\epsilon+a$. Let $\eta(t):=t+iy_0$ for $t\in [-h, r_0]$. By Lemma~\ref{lem:hyp metric on simply connected}, for all $t \in [-h, r_0]$ we have
\begin{equation*}
\kappa_{\Omega}(\eta(t);\eta'(s))\leq \frac{|\eta'(t)|}{\dist(\partial\Omega,\eta(t)).}
\end{equation*}
Therefore,
\begin{equation}\label{Eq:stima-above}
k_\Omega(q, r_0+iy_0)\leq \int_{-h}^{r_0} \kappa_\Omega(\eta(t); \eta'(t))dt\leq \int_{-h}^{r_0} \frac{dt}{\dist(\partial\Omega,\eta(t))}.
\end{equation}
Now,
\[
\dist(\Omega,\eta(t))=\begin{cases}  
\epsilon-|\theta| & t\in [-h, 0]\\
\sqrt{(t^2+(\epsilon-|\theta|)^2} &  t\in [0, r_0]
\end{cases}
\]
Hence, by \eqref{Eq:stima-above}, we have
\[
k_\Omega(q, r_0+iy_0)\leq \int_{-h}^{0} \frac{dt}{\epsilon-|\theta|}+\int_{0}^{r_0} \frac{dt}{\sqrt{(t^2+(\epsilon-|\theta|)^2}}=\frac{h}{\epsilon-|\theta|}+(\sinh)^{-1}\left(\frac{r_0}{\epsilon-|\theta|} \right),
\]
and we are done.
\end{proof}

Therefore, given any  $c,C$ with $0<c<C<1$,  we have that for $n$ sufficiently big, $4C\epsilon_n <1$ and hence  for all $u\in (0,1)$,
\[
B_n:=\{z\in D: a_n<\Im z<a_{n-1} : -u+c\epsilon_n <\Re z<-C\epsilon_n \}\subset D.
\]
The open set $B_n$ is a ``good box'' in the sense of \cite[Definition~5.7]{BCDG} (note that in \cite{BCDG}  one considered vertical lines, while here we consider horizontal lines, so that the real and the imaginary parts have  to be switched and the ``width'' in that definition corresponds to $2\epsilon_n$ in our case). Similarly, $B_n^\ast$ (the reflection of $B_n$ through the real axis) are ``good boxes''. In \cite{BCDG} it has been proved that in a good box the hyperbolic geometry is essentially that of a strip, in particular, geodesics joining ``sufficiently far away'' points  tends to stay in the middle of the box. More precisely, as a consequence of \cite[Corollary~5.8]{BCDG}, we have:

\begin{lemma}\label{Lem:stima-down}
Let $D$ be the domain defined in \eqref{Eq:D-doman}. Let $k\in (0,1)$,  $C_0\in (0,1)$ and $h\in (0,k)$. Then there exists  $\epsilon>0$ such that whenever the strip $S_n$ has half width $\epsilon_n<\epsilon$, the following holds.  For  every $0<r<r_0$ and $z\in S_n$ such that $\Re z=-k$, $\Im z>0$, if $\gamma$ is the geodesic connecting $z$ and $r$,  then there exists $q_n=q_n(h,r)\in\gamma$ such that $\Re q_n=-h$ and $|\Im q_n-(\epsilon_n+a_n)|\leq C_0\epsilon_n$.
\end{lemma}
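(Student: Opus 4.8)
The plan is to establish Lemma~\ref{Lem:stima-down} by transporting the ``good box'' estimate of \cite[Corollary~5.8]{BCDG} from the model strip to our domain $D$. Recall the box
\[
B_n:=\{z\in D: a_n<\Im z<a_{n-1},\ -u+c\epsilon_n<\Re z<-C\epsilon_n\},
\]
which sits inside $D$ because the only obstructions to $D$ near the relevant region are the slits $L_{\pm a_m}$, and $B_n$ is wedged strictly between $L_{a_n}$ and $L_{a_{n-1}}$ (here $u$ plays the role of the horizontal extent and will be taken slightly larger than $k$). The hypothesis $h<k$, together with fixing $c,C$ so that $0<c\epsilon_n<h<k<u<$ (something $<1$), guarantees that both the endpoint $z$ with $\Re z=-k$ and the intermediate target abscissa $-h$ lie in the interior horizontal range of $B_n$. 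The midline of $B_n$ in the $\Im$-direction is $\Im z\approx a_n+\epsilon_n$ (since $a_{n-1}-a_n=2\epsilon_{n-1}$, and one should be a little careful: the correct box has imaginary extent $(a_{n+1},a_n)$ with midline $a_n-\epsilon_n=a_{n+1}+\epsilon_n$; I would align the index conventions with \eqref{eq:rectangles} so that the midline is exactly $\epsilon_n+a_{n+1}$ or $\epsilon_n+a_n$ as stated).

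First I would recall precisely the content of \cite[Corollary~5.8]{BCDG}: in a good box of half-width $\delta$, a geodesic joining two points whose first coordinates (here: real parts) differ by at least some fixed amount must, at every intermediate first-coordinate value bounded away from the two ends, pass within $C_0\delta$ of the midline, provided $\delta$ is small enough. The point $z\in S_n$ with $\Re z=-k$ lies in $B_n$, and the point $r\in(0,r_0)\subset\R$ lies on the real axis, hence to the \emph{right} of $B_n$; but the geodesic $\gamma$ from $z$ to $r$ must exit $B_n$ through its right-hand side $\{\Re\zeta=-C\epsilon_n\}$ (it cannot leave through the top or bottom, as those are the slits $L_{a_n},L_{a_{n-1}}$ which bound $D$ and which $\gamma$ cannot cross; and it cannot leave to the left since $r$ is to the right and, by the strip-like geometry, the geodesic is monotone in the real part once $\epsilon_n$ is small). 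Therefore the portion of $\gamma$ with $\Re\zeta\in[-k,-C\epsilon_n]$ is a geodesic arc between a point of $B_n$ and the right edge of $B_n$, and its real part sweeps across $[-k,-C\epsilon_n]\ni -h$ by continuity, so there is $q_n\in\gamma$ with $\Re q_n=-h$; applying Corollary~5.8 at the first-coordinate value $-h$ (which, since $C\epsilon_n<h<k$ for large $n$, is bounded away from both ends of the swept interval by a fixed amount depending only on $h,k$) yields $|\Im q_n-(\epsilon_n+a_n)|\le C_0\epsilon_n$ once $\epsilon_n<\epsilon$ for a suitable $\epsilon=\epsilon(k,C_0,h)$.

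The steps, in order: (i) fix the auxiliary constants $c,C$ and the box $B_n$ so that $-k$ and $-h$ are interior abscissas of $B_n$ and $r$ lies to its right, recording that $B_n\subset D$ and is a good box in the sense of \cite[Definition~5.7]{BCDG} with half-width $\epsilon_n$; (ii) argue that $\gamma$, having one endpoint $z\in B_n$ and the other endpoint $r$ to the right of $B_n$, must cross $B_n$ from a point on its interior to its right edge, using that the top and bottom of $B_n$ lie on $\partial D$ (so $\gamma$ cannot escape there) and that, for $\epsilon_n$ small, a geodesic in a good box is monotone in the relevant coordinate (this monotonicity is part of the good-box analysis of \cite{BCDG}); (iii) invoke \cite[Corollary~5.8]{BCDG} to conclude that along the arc of $\gamma$ inside $B_n$, at the abscissa $\Re\zeta=-h$, the imaginary part is within $C_0\epsilon_n$ of the midline; (iv) identify the midline value with $\epsilon_n+a_n$ (up to the index bookkeeping) and read off the claimed inequality, choosing $\epsilon$ from the quantitative hypothesis of Corollary~5.8.

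The main obstacle is step (ii): verifying that $\gamma$ genuinely enters the good box and exits through its right edge rather than, say, dipping out the left side and coming back, or failing to reach the abscissa $-h$ at all. This requires knowing that the endpoint $z$ has $\Re z=-k$ with $k<u$ strictly (so $z$ is not on the left wall), that $r>0>-C\epsilon_n$ (so $r$ is strictly to the right), and that the good-box geometry forces the real part of $\gamma$ to be monotone along the box; all of these follow from the hypotheses $0<h<k<1$ and $0<r<r_0<1$ together with the results of \cite{BCDG}, but the monotonicity/confinement claim should be cited carefully. A minor bookkeeping point is reconciling the half-width $\epsilon_n=(a_n-a_{n+1})/2$ attached to $S_n$ in \eqref{eq:rectangles} with the box $B_n$ written with imaginary extent $(a_n,a_{n-1})$ above; I would simply fix one convention (matching the statement's ``$S_n$'' and ``$\epsilon_n+a_n$'') and use it consistently.
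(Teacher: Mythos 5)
Your proposal is correct and follows essentially the same route as the paper, which likewise obtains the lemma directly from the good-box estimate of \cite[Corollary~5.8]{BCDG}, with the confinement of the geodesic to the strip before it first reaches $\{\Re z=0\}$ supplying the crossing point at abscissa $-h$. The index bookkeeping you flag (the half-width of the box with imaginary extent $(a_n,a_{n-1})$ versus $\epsilon_n$ attached to $S_n$ in \eqref{eq:rectangles}, and the midline value $\epsilon_n+a_n$) is indeed a sloppiness present in the paper itself, and fixing one convention as you propose resolves it.
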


Now we are in good shape to give

\begin{proof}[Proof of Proposition~\ref{Prop:good-domain}]
Let $r_0\in(0,1)$.
Since the domain $D$ is symmetric with respect to the real axis, in view of Remark~\ref{rem:symmetric domains}, we can choose a Riemann map $f:\D\to D$ such that  $f(0)=r_0$ and $\ov{f(\ov{z})}=f(z)$, so that $D\cap\R$ is a geodesic.  We can  also assume that $f(x)\ra0^+$ as $x\ra1^-$.

 For $\sigma\in\partial \D$ let  $R_\sigma$ 
  be the geodesic ray in $\D$ whose closure connects $0$ to $\sigma$ and let $\gamma_\sigma:=f(R_\sigma)$. Since $f(0)=r_0$, for every geodesic ray  $\gamma$ in $D$ originating at  $r_0$, $\gamma=\gamma_\sigma$ for some $\sigma\in\partial\D$. 
Because of the choice that $f(x)\ra0$ as $x\ra1$, the map $f$ extends continuously to any $\sigma\in\partial \D$ with $\sigma\neq1$, so $\hat{f}(1)$  is the only prime end of $D$ whose impression is not a point.  In particular, by \eqref{Eq:oro-in-disc-back} $\overline{E_D(\gamma_\sigma, M)}\cap \partial D=f(\sigma)$ for all $\sigma\neq1$. So we only need to study  Busemann horospheres centered at $\gamma_1=(0,r_0)$.

We prove the claim by showing that for every $M\in\R$, 
$$
\overline{E_D(\gamma_1, M)}\cap \partial D=\{0\}.
$$
In other words, since $0$ is an accumulation point of $\gamma_1$ in $\partial D$ we need to show that $\overline{E_D(\gamma, 0)}\cap \partial D=\{0\}$.  Observe that 
 \begin{equation}\label{eq:limit of horosphere in negative segment}
\overline{E_D(\gamma_1, M)}\cap \partial D\subseteq [-1,0].
\end{equation}
Indeed let  $\Gamma(f,1)$ be the cluster set of $f$ at $1$.   Clearly, 
$\overline{E_D(\gamma_1, M)}\cap \partial D\subseteq \Gamma(f,1)$. As we already recalled, 
 $\Gamma(f,1)$ coincides with the impression of the prime end $\hat{f}(\underline{1})$, hence \eqref{eq:limit of horosphere in negative segment} follows. 
 
Assume by contradiction that  there exists $M\in\R$  such that $[-1,0)\cap \overline{E_D(\gamma_1, M)}$ contains at least two points. Then we claim that there exist $k\in (0,1)$, an  infinite set $\NN\subseteq \N$ and sequence $\{z_n\}_{n\in\NN}\subset E_D(\gamma_1, M)$ such that  (recall the definition of the rectangles $S_n$ in \eqref{eq:rectangles})
\begin{equation}\label{eq:replacing zn}
\begin{cases}
\re z_n=-k, & \ z_n\in S_n  \quad \text{ for all $n\in\NN$},\\
\Im z_n\searrow0^+,
\end{cases}
\end{equation}
 and for all $n\in\NN$,
\begin{equation}\label{Eq:limit-oro-fails}
\lim_{r\to 0^+}[k_D(z_n, r)-k_D(r_0, r)]< M.
\end{equation}

Indeed, if $\overline{E_D(\gamma, 0)}\cap \partial D$ is not reduced to $\{0\}$, by \eqref{eq:limit of horosphere in negative segment} there exists $s\in (0,1]$ such that $-s\in \overline{E_D(\gamma, 0)}\cap \partial D$. Thus there exists a sequence $\{w_n\}$ such that $w_n\in E_D(\gamma_1, M)$ for all $n$ and $\lim_{n\to\infty}w_n=-s$. Since $E_D(\gamma_1, M)$ is symmetric with respect to the real axis (see Remark~\ref{rem:symmetric domains}), it follows that also $\{\overline{w_n}\}\subset E_D(\gamma_1, M)$ and $\lim_{n\to\infty}\overline{w_n}=-s$. Therefore, there exists a sequence, which we still denote by $\{w_n\}$, such that $\Im w_n>0$, $w_n\in E_D(\gamma_1, M)$ for all $n$ and $\lim_{n\to\infty}w_n=-s$. Up to passing to a subsequence, we can also assume that $\{\Im w_n\}$ is strictly decreasing and for every $n$ there exists $m_n\in\N$ such that $w_n\in S_{m_n}$, with $S_{m_n}\cap S_{m_{\tilde n}}=\emptyset$ for $n\neq \tilde n$. Note in particular that $\{m_n\}$ is strictly increasing and converges to $\infty$.

Now, fix $k\in (0,s)$. For all $n$ sufficiently large, since $w_n\ra -s$, we have that $\re w_n< -k$. Thus, again up to passing to a subsequence, we can assume that $\re w_n< -k$ for all $n$. Let $\alpha_n$ be the geodesic connecting $w_n$ to $r_0$. By Remark~\ref{rem:sliding along geodesics},  $\alpha_n\subset E_D(\gamma_1, M)$ for all $n$.  Since $\alpha_n$ is a continuous curve and $-s<-k<0$, by the intermediate value theorem $\alpha_n$ contains points of real part equal to $-k$. By letting $z_n$ to be the first of such points when sliding from $w_n$ to $r_0$ along $\alpha_n$, we have that $z_n\in S_{m_n}$ (because to exit the rectangle $S_n$, the curve $\alpha_n$ first needs to touch the set $\Re z=0$). Hence, by construction, $\{z_n\}\subset E_D(\gamma_1, M)$ is a sequence such that $\Re z_n=-k$, and $z_n\in S_{m_n}$ for all $n$. Since $\{m_n\}$ is strictly increasing and converges to $\infty$, this implies that  $\{\Im z_n\}$ strictly decreases to $0$. Setting $\NN=\{m_n\}$ and relabelling the indices of $\{z_n\}$ by letting $z_{m_n}=z_n$ with $z_n\in S_{n_m}$, we have thus proved the claim (\eqref{Eq:limit-oro-fails} just means that $\{z_n\}\subset E_D(\gamma_1, M)$).

Thus, arguing by contradiction, we can assume that both \eqref{eq:replacing zn} and \eqref{Eq:limit-oro-fails} hold. 

Let $r\in (0,1)$, $h\in(0,\frac{k}{17})$ and $C_0=\frac{1}{2}$. For each  $n\in\NN$ sufficiently large so that $\epsilon_n$ is sufficiently small let  $q_n$ be a point as given by Lemma~\ref{Lem:stima-down}, which belongs to the geodesic joining $z_n$ and $r$, and  such that  
\begin{equation}\label{eq:stima-im-qn}
\Re q_n=-h \text{\ \ \ and\ \ \ }|\Im q_n-(\epsilon_n+a_n)|\leq \frac{\epsilon_n}{2}.
\end{equation}

We first estimate $k_D(z_n, q_n)$. 
For each $n\in\NN$ consider the strip $\SS_n:=\{a_n<\Im z<a_{n-1}\}$ and the domain  $D_n:=\C\setminus (L_{a_n}\cup L_{a_{n-1}})$.  
Notice that $D_n\supset \SS_n$ and that the points $z_n, q_n$ belong to $\SS_n$. Let  $C_1=\frac{1}{2}$. By  localization of hyperbolic distances (see  \cite[Proposition~6.8.3]{BCDbook}),  for  every strip $\SS_n$ with halfwidth  $\epsilon_n$  sufficiently small we have 
\[
k_{D_n}(z_n, q_n) \geq \frac{1}{2} k_{\mathcal S_n}(z_n, q_n).
\]
On the other hand, using Lemma~\ref{lem:hyp metric on simply connected}, since $\dist(z,\partial\SS_n)<\epsilon_n$ for all $z\in\SS_n$ and $|\Re z_n-\Re q_n|=k-h$, we have  
$$
k_{\mathcal S_n}(z_n, q_n)\geq\int_{-h}^{-k}\frac{1}{4\epsilon_n}=\frac{k-h}{4\epsilon_n}.
$$
By additivity of the hyperbolic metric  along geodesics, and since $D\subset D_n$ and $C_1=\frac{1}{2}$, it follows that 
\begin{equation}\label{eq: lower bound zn r}
\begin{split}
k_D(z_n, r)&=k_D(z_n, q_n)+k_D(q_n, r) \geq k_{D_n}(z_n, q_n)+k_D(q_n, r) \\& \geq\frac{(k-h)}{8\epsilon_n}+ k_D(q_n,r).
\end{split}
\end{equation}

We now show that equation (\ref{eq: lower bound zn r}) implies a lower bound on $k_D(q_n, r_0)$. Indeed, let  $n$ be sufficiently large such that the previous estimates hold. By the contradiction assumption, for any $n$ and  for any $r$ sufficiently close to $0$ (depending on $n$) we have
\begin{equation*}
k_D(z _n,r)-k_D(r_0, r)\leq 2M.
\end{equation*}
Thus, $k_D(z _n,r)\leq k_D(r_0, r)+2M$ and, by (\ref{eq: lower bound zn r}),
\[  \frac{(k-h)}{8\epsilon_n}+ k_D(q_n,r) \leq k_D(r_0, r)+2M.
\]
Therefore, by the triangular inequality,
\begin{equation}\label{eq:first-1}
\frac{(k-h)}{8\epsilon_n}-2M \leq k_D(r_0, r)-k_D( q_n,r)\leq k_D(q_n, r_0).
\end{equation}
We now show that, again for $n$ large, $k_D(q_n, r_0)$ needs to satisfy an upper bound incompatible with (\ref{eq:first-1}), thus reaching a contradiction.

Let $\theta_n:=\Im q_n-\epsilon_n-a_{n}$.
By \eqref{eq:stima-im-qn},  we have that $|\theta_n|\leq \frac{\epsilon_n}{2} $, by which it follows that 
\begin{equation}\label{eq:theta}
\frac{1}{\epsilon_n-|\theta_n|}\leq \frac{2}{\epsilon_n}.
\end{equation}
By the triangular inequality, Lemma~\ref{Lem:stima-up}, (\ref{eq:theta}), and since $(\sinh)^{-1}$ is a monotone function on real numbers  we have  that 
\begin{equation*}
\begin{split}
k_D( q_n,r_0)&\leq k_D(q_n,r_0+i\;\Im q_n)+k_D(r_0+i\;\Im q_n, r_0)\\
&\leq k_D(q_n,r_0+i\;\Im q_n)+ \frac{h}{\epsilon_n-|\theta_n|}+(\sinh)^{-1}\left(\frac{r_0}{\epsilon_n-|\theta_n|} \right)\\
&\leq 
k_D(r_0+i\;\Im q_n, r_0)+\frac{2h}{\epsilon_n}+(\sinh)^{-1}\left(\frac{2r_0}{\epsilon_n} \right).
\end{split}
\end{equation*}
Thus, combining this result with  \eqref{eq:first-1}, we have for all $n\in\NN$ sufficiently large
\[
\frac{(k-h)}{8\epsilon_n}\leq \frac{2h}{\epsilon_n}+ (\sinh)^{-1}\left(\frac{2r_0}{\epsilon_n} \right)+k_D(r_0+i\;\Im q_n, r_0)+2M.
\]
Multiplying by $\epsilon_n$ the previous inequality and taking the limit for $n\to \infty$ (which implies $\Im q_n\to 0$ and $\epsilon_n\to 0$) and taking into account that $\lim_{x\ra0}x\sinh(\frac{c}{x})=0$ for any constant $c$,  we get
\[
\frac{k-h}{8}\leq 2h,
\]
which gives a contradiction since we chose $h\in(0,\frac{k}{17})$.
\end{proof}

\subsection{Nontangential limits are weaker than H-limits}\label{sec:Carsten example}

The aim of this section is to show that existence of non-tangential limits does not imply in general existence of $H$-limits. The construction we present here is based on an idea of Petersen~\cite{PetersenPrimeEnds}, who kindly shared his preprint with the authors. It gives an example of a simply connected domain $D\subsetneq \C$ for which any Riemann map $h:\D\ra D$ has nontangential  limit at every point of $\partial\D$, but fails to have $H$-limit at a boundary point (See Figure~\ref{fig2}). 

For $n=0,1,2,\dots$, we let $x_n:=\frac{1}{2^n}$, $y_n:=\frac{1}{2^n e^{3^n}}$. Consider the simply connected domain
\begin{equation}\label{eq:Carsten domain}
D:=\H\setminus\bigcup_{n\in \N} \{z:\Re z=x_n, |\Im z|\geq y_n\}.
\end{equation}

\begin{figure}[hbt!] 
\centering
\setlength{\unitlength}{0.9\textwidth}
\includegraphics[width=0.9\textwidth]{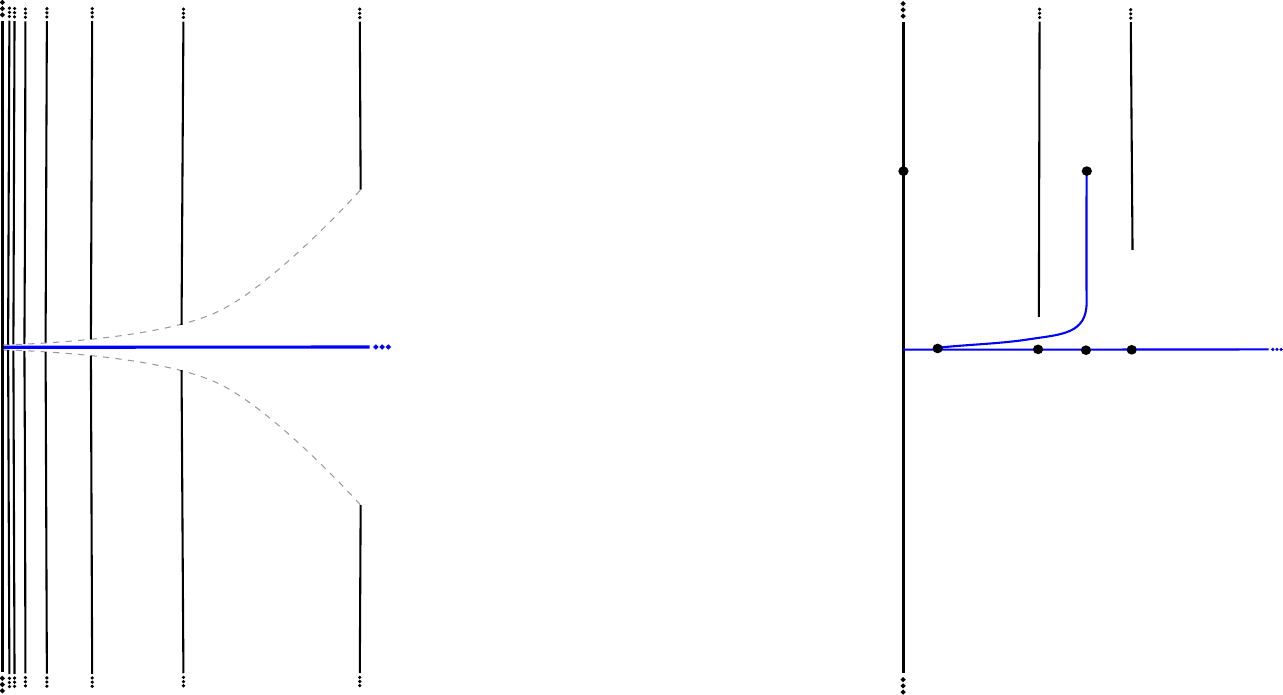}
\put(-0.22,0.25){\tiny $x_{n+1}$}
\put(-0.12,0.25){\tiny $x_{n}$}
\put(-0.16,0.25){\tiny $c_{n}$}
\put(-0.16,0.42){\small $z_n$}
\put(-0.32,0.42){\small $i$}

\caption{\small \label{fig2} An illustration of the domain $D$ and of the proof of (\ref{Eq:contact-oro-Carst}). Geodesics are in blue.}
\end{figure}

By construction, $D$ is symmetric with respect to the real axis. Hence, by Remark~\ref{rem:symmetric domains},  $\gamma_1:=(0,1]$ is (the image of) a geodesic ray in $D$. Also, it is simple to see that the principal part of every prime end of $D$ is a singleton. Moreover, there exists a unique prime end $\underline{\zeta}_0$ of $D$ such that $I(\underline{\zeta})$ is a singleton for all $\underline{\zeta}\in \partial_C D\setminus\{\underline{\zeta_0}\}$ and $I(\underline{\zeta_0})=i\R$---the prime end $\underline{\zeta_0}\}$ is represented by the null chain $(C_n)$ where 
\[
C_n=\{z\in \C: \Re z=x_n, |\Im z|\leq y_n\}.
\]

Hence, if $g:\D\to D$ is a Riemann map, $\sigma_0\in\partial\D$ is such that $\hat{g}(\underline{\sigma_0})=\underline{\zeta_0}$, it follows that (see Subsection~\ref{sec:limits})  $\lim_{\D\ni z\to \sigma}g(z)$ exists for all $\sigma\in\partial\D\setminus\{\sigma_0\}$, $\angle\lim_{\D\ni z\to \sigma_0}g(z)$  exists but $\lim_{\D\ni z\to \sigma_0}g(z)$ does not. We will show that for all $M\in\R$
\begin{equation}\label{Eq:contact-oro-Carst}
\overline{E_D(\gamma_1,M)}\cap\partial D=i\R.
\end{equation} 
Hence, by Theorem~\ref{Thm:main} we have
\begin{proposition}\label{prop:Carsten}
Let $D$ be as in \eqref{eq:Carsten domain}. Let $g:\D\to D$ be a Riemann map. Then $g$ has non-tangential limit at every point of $\partial\D$ and there exists $\sigma_0\in\partial\D$ such that $g$ has unrestricted limit at every point of $\partial\D\setminus\{\sigma_0\}$ and $g$ does not have $H$-limit at $\sigma_0$. Also, the domain $D$ fails to have the Denjoy-Wolff Property.
\end{proposition}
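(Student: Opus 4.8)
The plan is to establish \eqref{Eq:contact-oro-Carst}, from which Proposition~\ref{prop:Carsten} follows immediately via Theorem~\ref{Thm:main}: once we know that the closure of a single Busemann horosphere $\overline{E_D(\gamma_1,M)}$ meets $\partial D$ in the whole segment $i\R$ (which is more than one point), condition (4) of Theorem~\ref{Thm:main} fails, so $D$ does not have the Denjoy-Wolff Property and (equivalently, by condition (6)) no Riemann map has $H$-limit at $\sigma_0$. The statements about non-tangential and unrestricted limits have already been recorded in the paragraph preceding the proposition (they follow from Carath\'eodory's theory and the identification $\Gamma(g,\sigma)=I(\hat g(\underline\sigma))$, $\Gamma_N(g,\sigma)=\Pi(\hat g(\underline\sigma))$, together with the observation that every prime end other than $\underline{\zeta_0}$ has singleton impression while $\underline{\zeta_0}$ has principal part $\{0\}$ but impression $i\R$). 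So the entire content of the proof is \eqref{Eq:contact-oro-Carst}.

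For the inclusion $\overline{E_D(\gamma_1,M)}\cap\partial D\subseteq i\R$, I would argue exactly as in the proof of Proposition~\ref{Prop:good-domain}: $\overline{E_D(\gamma_1,M)}\cap\partial D$ is contained in the cluster set $\Gamma(g,\sigma_0)=I(\underline{\zeta_0})=i\R$. For the reverse inclusion $i\R\subseteq\overline{E_D(\gamma_1,M)}$, by the symmetry of $D$ with respect to $\R$ and Remark~\ref{rem:symmetric domains} it suffices to show that every point $it$ with $t>0$ lies in $\overline{E_D(\gamma_1,M)}$, and since horospheres grow with $M$ it is enough to handle $M=0$, i.e.\ to produce for each fixed $t>0$ a sequence $z_n\to it$ with $z_n\in E_D(\gamma_1,0)$. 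The natural candidates are points $z_n$ lying ``deep inside'' the thin slit-channel between the barriers $\{\Re z=x_{n+1}\}$ and $\{\Re z=x_n\}$, at height roughly $it$; concretely, take $z_n$ near $c_n+it$ where $c_n:=\tfrac12(x_n+x_{n+1})$ is the midpoint (this is the picture in Figure~\ref{fig2}). One must check $z_n\in D$: this is where the slit lengths $y_n=\tfrac1{2^n}e^{-3^n}$ enter — the barriers at $\Re z = x_n$ have a gap $|\Im z|<y_n$, and $c_n$ lies strictly between $x_{n+1}$ and $x_n$, so a point of the form $c_n+it$ is automatically in $D$ for every $t$, because the obstacle lines occur only at the abscissae $x_m$. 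Then $z_n\to 0+it$... wait — one needs $z_n\to it$, and since $c_n\to 0$ this holds.

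The crux is verifying $z_n\in E_D(\gamma_1,0)$, i.e.\ $\lim_{s\to+\infty}[k_D(z_n,\gamma_1(s))-k_D(\gamma_1(0),\gamma_1(s))]<0$, equivalently (after sending $\gamma_1(s)\to 0$ along $\R_{>0}$ and using \eqref{eq-limit-oro-eucl}-type limits, cf.\ \eqref{Eq:limit-oro-fails}) $\lim_{r\to0^+}[k_D(z_n,r)-k_D(1,r)]<0$. The heart of the estimate is an \emph{upper} bound on $k_D(z_n,r)$ that beats $k_D(1,r)$: one travels from $r$ up the real axis to a point of abscissa $c_n$, and then vertically up the channel $\{\Re z\approx c_n\}$ to height $t$. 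Inside the vertical channel between consecutive barriers the domain $D$ looks locally like the half-plane near the barrier tips and like a wide region away from them; the key point forcing the estimate to work is that the barrier tips at height $\pm y_n$ are \emph{exponentially} close to the real axis ($y_n=2^{-n}e^{-3^n}$), so that the ``bottleneck'' the geodesic from $r$ to $z_n$ has to pass through is very short in hyperbolic terms compared to how far $r$ is from $1$ as $r\to0$. Using Lemma~\ref{lem:hyp metric on simply connected} to bound $\kappa_D$ from above by $1/\dist(\cdot,\partial D)$ along an explicit path, I expect the cost of the detour into the $n$-th channel up to height $t$ to be dominated by a quantity like $\log(1/y_n)\sim 3^n$ plus an $O_t(1)$ term, while $k_D(1,r)$ and the gain from moving $r$ close to $0$ along the axis both blow up like $\log(1/r)$; choosing $r$ small enough (depending on $n$ and $t$) makes the difference negative. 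This comparison — showing the channel-entry cost is finite and controlled while the along-the-axis hyperbolic distance to $r$ diverges — is the main obstacle, and it is exactly the place where the precise choice $y_n=2^{-n}e^{-3^n}$ (so that $x_n/y_n$, hence $\log(x_n/y_n)$, is governed by $3^n$, sub-exponentially in $x_n^{-1}=2^n$... actually super-exponentially, but the point is it is finite for each fixed $n$) is used; I would carry it out by an explicit path integral as in Lemma~\ref{Lem:stima-up}, and conclude \eqref{Eq:contact-oro-Carst}, completing the proof.
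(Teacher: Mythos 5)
Your reduction of the proposition to \eqref{Eq:contact-oro-Carst}, the inclusion $\overline{E_D(\gamma_1,M)}\cap\partial D\subseteq i\R$ via the impression of the prime end, and the choice of test points $z_n$ near $c_n+it$ all match the paper. The gap is in the key quantitative step. Your upper bound for $k_D(z_n,r)$ is the length of a path that runs along the real axis from $r$ to $c_n$ and then climbs the channel; but that path must cross every gate $x_m$ with $r<x_m<c_n$, and by the upper estimate of Lemma~\ref{lem:hyp metric on simply connected} each such crossing costs about $2\cdot 3^m$, whereas the lower estimate (with its factor $\tfrac14$) only credits $k_D(1,r)$ with about $\tfrac54\, 3^m$ per gate. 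These gate costs diverge as $r\to 0^+$, so the difference between your upper bound for $k_D(z_n,r)$ and any lower bound for $k_D(1,r)$ tends to $+\infty$; ``choosing $r$ small'' makes things worse, not better. Two of your heuristics are also off: $k_D(z_n,r)$ is not ``$\sim 3^n+O_t(1)$'' --- it diverges as $r\to0^+$, being at least $k_D(1,r)-k_D(1,z_n)$ --- and $k_D(1,r)$ blows up like a power of $1/r$ (a sum of $3^m$ over the gates passed), not like $\log(1/r)$. The divergent part common to $k_D(z_n,r)$ and $k_D(1,r)$ must cancel \emph{exactly}, and no comparison of an upper bound on one side against a lower bound on the other, with the factor-$4$ gap of Lemma~\ref{lem:hyp metric on simply connected}, can produce that cancellation.

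The missing idea is the one the paper uses: since $D$ is symmetric with respect to $\R$, the segment $D\cap\R$ is a geodesic (Remark~\ref{rem:symmetric domains}), so for $\gamma_1(t)<c_n$ the distance is \emph{additive}, $k_D(1,\gamma_1(t))=k_D(1,c_n)+k_D(c_n,\gamma_1(t))$; pairing this with the triangle inequality $k_D(z_n,\gamma_1(t))\le k_D(z_n,c_n)+k_D(c_n,\gamma_1(t))$ makes the divergent common term $k_D(c_n,\gamma_1(t))$ cancel identically, and one is left with showing $k_D(z_n,c_n)-k_D(1,c_n)\to-\infty$. Here the vertical climb costs at most $2^{n+2}|t|$ (the channel keeps distance at least $\epsilon_n$ from $\partial D$), while crossing the gates between $1$ and $c_n$ costs at least roughly $\tfrac98\,3^{n}-O(n)$ because $\epsilon_j/y_j\sim e^{3^j}$; the resulting bound is independent of $t$ and of $r$, and one gets below any prescribed $M$ by taking $n$ large --- not $r$ small. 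With this correction your outline becomes the paper's proof. (Your reduction to a single value of $M$ is harmless: although monotonicity in $M$ does not cover $M<0$, one value suffices, since by Proposition~\ref{Prop:simil-equal} and Lemma~\ref{lemma:H-lim-univ} the set $\overline{E_D(\gamma_1,M)}\cap\partial D$ equals $\Gamma_H(g,\sigma_0)$ and is independent of $M$, and failing condition (4) of Theorem~\ref{Thm:main} at one $M$ already rules out the Denjoy-Wolff Property.)
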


\begin{proof}[Proof of \eqref{Eq:contact-oro-Carst}]
We can parametrize the geodesic ray $\gamma_1$ as $\gamma_1:[0,+\infty)\to\D$ with $\gamma_1(t)=\frac{1}{t+1}$. 

Let $M\in\R$. Let , $\epsilon_n:=\frac{1}{2^{n+2}}$ and $c_n:=\frac{3}{2^{n+2}}$, $n\in\N$. Let $y\in \R$. Let $z_n:=c_n+yi$. Clearly, $\{z_n\}\in D$ and $\{z_n\}$ converges to $yi$. We show that $\{z_n\}$ is eventually contained in $E_D(\gamma_1,M)$, so that $iy\in \overline{E_D(\gamma_1,M)}\cap\partial D$, and, by the arbitrariness of $y$, the result follows.

 Notice that $c_n$ are the midpoints between $x_n,x_{n+1}$ and that   $|x_n-c_n|=|x_{n+1}-c_n|=\epsilon_n$.

In order to  show that $\{z_n\}$ is eventually contained in $E_D(\gamma_1,M)$, we prove that there exists $n_M\in\N$ such that for all $n\geq n_M$,
\begin{equation}\label{Eq:stay-in-oro-def-ex}
\lim_{t\ra +\infty}[k_D(z_n, \gamma_1(t)-k_D(1, \gamma_1(t) )]<M.
\end{equation}

We first estimate $k_D(x_j,c_j)$ and $k_D(c_j, x_{j+1})$ for any $j\geq 0$ (see Figure~\ref{fig2} for the case $y=1$). 
For $t\in [0,\epsilon_n]$ and $z(t)=c_j+t$ we have  
$$
\dist(z(t),\partial D)\leq |z(t)-( x_j+i y_j)|=\sqrt{t^2+y_j^2}.$$

Hence using the lower estimate for the hyperbolic metric on simply connected sets (see Lemma~\ref{lem:hyp metric on simply connected}) and monotonicity of the real logarithmic function we have
$$
k_D(x_j,c_j)\geq\frac{1}{4}\int_{0}^{\epsilon_j}\frac{1}{ \sqrt{t^2+y_j^2}}=\frac{1}{4}\ln\left|\frac{\epsilon_j}{y_j}+\sqrt{1+\frac{\epsilon_j^2}{y_j^2}}\right|\geq \frac{1}{4}\ln\left|\frac{2\epsilon_j}{y_j}\right|=\frac{1}{2}(3^j-\ln 2).
$$
 Similarly, by considering  $z(t)=x_{j+1}-t$  for $t\in [0,\epsilon_n]$ and taking into account that
 $$
 \dist(z(t),\partial D)\leq|z(t)- (x_{j+1}+ iy_{j+1})|=\sqrt{t^2+y_{j+1}^2}
 $$
  we  obtain
$$
k_D(c_j, x_{j+1})\geq \frac{1}{4}\ln\left|\frac{2\epsilon_j}{y_{j+1}}\right|=\frac{3^{j+1}}{4}.
$$
It follows that for any $n\geq 1$ and any  $t$  so that $\gamma_1(t)<c_n$, 
\begin{equation}\label{eq:Carsten1}
\begin{split}
k_D(1, \gamma_1(t))&=\sum_{j=0}^{n-1}\left(k_D(x_j, c_j)+k_D(c_j, x_{j+1})\right)+k_D(x_n, c_n)+k_D(c_n,\gamma_1(t))\\&
\geq \sum_{j=0}^{n-1}\left( \frac{1}{2}(3^j-\ln 2)+\frac{3^{j+1}}{4}\right)+\frac{1}{2}(3^n-\ln 2)+k_D(c_n,\gamma_1(t)) \\
&=\frac{9}{8}3^{n}-\frac{(n+1)\ln2}{2}-\frac{5}{8}+k_D(c_n,\gamma_1(t)).
\end{split}
\end{equation}

We now estimate $k_D(z_n,c_n)$ for any fixed $n$. We consider the curve $\eta_n:[0,1]\to D$ given by $\eta_n(t)=c_n+tyi$. Note that $\dist(\eta_n(t), \partial D)\geq \epsilon_n$ for all $t\in[0,1]$. Hence,  using  the upper estimate for the hyperbolic metric (see Lemma~\ref{lem:hyp metric on simply connected}), we have 
 $$
 k_D(z_n,c_n)\leq \ell_D(\eta_n)=\int_0^1\kappa_D(\eta_n(t); \eta_n'(t))dt\leq  \int_0^1\frac{|\eta'_n(t)|}{\dist(\eta_n(t), \partial D)}dt \leq \frac{|y|}{\epsilon_n}=2^{n+2}|y|.
 $$
 
 Hence for any $n$ and any  $t$  so that $\gamma_1(t)<c_n$
 
\begin{equation}\label{eq:Carsten2}
 k_D(z_n,\gamma_1(t)) \leq k_D(z_n,c_n)+k_D(c_n,\gamma_1(t))\leq  2^{n+2}|y|+k_D(c_n,\gamma_1(t)).
\end{equation}

 Putting together (\ref{eq:Carsten1}) and (\ref{eq:Carsten2}) we obtain for  any  $t$  so that $\gamma_1(t)<c_n$,
\begin{equation*}
\begin{split}
k_D(z_n, \gamma_1(t)-k_D(1, \gamma_1(t) )&\leq 2^{n+2}|y|-\frac{9}{8}3^{n}+\frac{(n+1)\ln2}{2}+\frac{5}{8}=:T_n.
\end{split}
\end{equation*}
Hence, for every $n$ fixed, we have
\[
\lim_{t\ra +\infty}[k_D(z_n, \gamma_1(t)-k_D(1, \gamma_1(t) )]\leq T_n.
\]
Since $\lim_{n\to\infty}T_n=-\infty$, taking $n_M\in\N$ such that $T_n<M$ for all $n\geq n_M$,  \eqref{Eq:stay-in-oro-def-ex}   holds and we are done.
\end{proof}

\section{Simply connected domains whose prime ends have singleton principal part }\label{sub:easy}

Let $\Omega\subsetneq \C$ be a simply connected domain which does {\sl not} have the Denjoy-Wolff Property. This means that there exists a holomorphic self-map of $\Omega$ without fixed points such that the sequence of iterates does not converge to a point. However, it might happen that other elements of  $\hbox{Hol}_d(\Omega,\Omega)$ do have a Denjoy-Wolff point. Is it possible to identify specific conditions which guarantee that a given element of $\hbox{Hol}_d(\Omega,\Omega)$ has a Denjoy-Wolff point? The aim of this section is to focus on such a question. 

We start with a simple fact (compare with \cite[Proposition~5.1]{Ka}):

\begin{lemma}\label{Lem:DW-point-Cara}
Let $\Omega\subsetneq \C$ be a simply connected domain. Let $f\in \hbox{Hol}_d(\Omega,\Omega)$. Then there exists a unique prime end $\underline{\zeta}_f\in\partial_C\Omega$ such that for every $z\in \Omega$ the sequence $\{f^{\circ n}(z)\}$ converges to $\underline{\zeta}_f$ in the Carath\'eodory topology of $\Omega$.
\end{lemma}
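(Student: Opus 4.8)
The plan is to transfer the statement to the unit disc via a Riemann map, where the Denjoy--Wolff theorem already gives convergence of iterates, and then read off the conclusion using the fact that Riemann maps extend as homeomorphisms of the Carath\'eodory compactifications. More precisely, fix a Riemann map $h:\D\to\Omega$, so that $\2h:\overline{\D}^C\to\overline{\Omega}^C$ is a homeomorphism (Subsection~\ref{sec:basic definitions}). Set $g:=h^{-1}\circ f\circ h$; since $f$ has no fixed points in $\Omega$ and $h$ is a biholomorphism, $g\in\hbox{Hol}_d(\D,\D)$. By the Denjoy--Wolff theorem there is a (unique) point $\tau\in\partial\D$ with $g^{\circ n}\to\tau$ uniformly on compacta of $\D$; in particular, for every $w\in\D$ the sequence $\{g^{\circ n}(w)\}$ converges to $\tau$ in the Euclidean topology of $\overline{\D}$, hence to $\underline{\tau}={\sf id}_\D^{-1}(\tau)$ in the Carath\'eodory topology of $\D$.

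Next I would push this forward by $h$. Since $f^{\circ n}=h\circ g^{\circ n}\circ h^{-1}$, for any $z\in\Omega$, writing $w=h^{-1}(z)$, we have $f^{\circ n}(z)=h(g^{\circ n}(w))$. As $\{g^{\circ n}(w)\}\to\underline{\tau}$ in $\overline{\D}^C$ and $\2h$ is continuous (indeed a homeomorphism) on $\overline{\D}^C$, restricting $\2h$ to $\D$ agrees with $h$, so $\{f^{\circ n}(z)\}=\{\2h(g^{\circ n}(w))\}$ converges in $\overline{\Omega}^C$ to the prime end $\underline{\zeta}_f:=\2h(\underline{\tau})=\3h(\tau)\in\partial_C\Omega$. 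This proves existence of a prime end to which all orbits converge, and the limit is manifestly independent of the starting point $z$.

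For uniqueness of $\underline{\zeta}_f$: if $\underline{\zeta}\in\partial_C\Omega$ also has the property that $\{f^{\circ n}(z)\}\to\underline{\zeta}$ for some (equivalently every) $z$, then applying the homeomorphism $\2h^{-1}$ we get $\{g^{\circ n}(w)\}\to\2h^{-1}(\underline{\zeta})$ in $\overline{\D}^C$. But $\overline{\D}^C\cong\overline{\D}$ is Hausdorff and we already know $\{g^{\circ n}(w)\}\to\underline{\tau}$, so $\2h^{-1}(\underline{\zeta})=\underline{\tau}$, i.e.\ $\underline{\zeta}=\underline{\zeta}_f$. Finally, one should note the construction does not depend on the choice of Riemann map: two Riemann maps differ by precomposition with an automorphism $\psi$ of $\D$, which conjugates $g$ to $\psi^{-1}\circ g\circ\psi$, whose Denjoy--Wolff point is $\psi^{-1}(\tau)$, and $\3h$ correspondingly changes so that the resulting prime end $\3h(\tau)$ is unchanged; alternatively, uniqueness already established makes $\underline{\zeta}_f$ intrinsic.

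I do not expect any serious obstacle here: the only point requiring a little care is matching the two notions of convergence (Euclidean convergence of $g^{\circ n}(w)$ to $\tau\in\partial\D$ versus convergence in the Carath\'eodory topology to $\underline{\tau}$), which is exactly the content of the identification $\overline{\D}^C\cong\overline{\D}$ via ${\sf id}_\D$ recalled in Subsection~\ref{sec:basic definitions}, together with the fact that $\2h$ extends $h$ continuously to the whole Carath\'eodory compactification. Once that bookkeeping is in place, everything is a formal consequence of functoriality of the Carath\'eodory compactification under Riemann maps and of the classical Denjoy--Wolff theorem on $\D$.
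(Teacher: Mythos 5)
Your proposal is correct and follows essentially the same route as the paper: conjugate $f$ to $g=h^{-1}\circ f\circ h$ on $\D$, apply the classical Denjoy--Wolff theorem there, identify Euclidean convergence to $\tau$ with convergence to $\underline{\tau}$ via ${\sf id}_\D$, and push forward through the homeomorphism $\overline{\D}^C\to\overline{\Omega}^C$ induced by $h$. Your explicit remarks on uniqueness (via Hausdorffness of the compactification) and independence of the Riemann map are fine additions but do not change the argument.
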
 
\begin{proof}
The result follows  directly from \cite[Proposition~5.1]{Ka}, after realizing that the identity map of $\Omega$ gives a homeomorphism between the Carath\'eodory compactification of $\Omega$ and the Gromov compactification of $\Omega$ with respect to the hyperbolic distance. However, we give a simple direct proof which does not involve the Gromov compactification.

Let $h:\D\to\Omega$ be a Riemann map.  Let $g:=h^{-1}\circ f\circ h\in \hbox{Hol}_d(\D,\D)$. Hence, by the Denjoy-Wolff Theorem, for every $w\in \D$, the sequence $\{g^{\circ n}(w)\}$ converges to the Denjoy-Wolff point $\tau\in\partial \D$ of $g$. Since the identity map extends as a homeomorphism from the Carath\'eodory compactification $\overline{\D}^C$ of $\D$ (endowed with the Carath\'eodory topology) and the Euclidean closure $\overline{\D}$, it follows that  $\{g^{\circ n}(w)\}$ converges to $\underline{\tau}\in\partial_C\D$ in the Carath\'eodory topology of $\D$. Since $\hat{h}:\overline{\D}^C\to \overline{\Omega}^C$ is an homeomorphism, it turns out that $\{f^{\circ n}(g(w))\}$ converges to $\underline{\zeta}:=\hat{h}(\underline{\tau})$ in the Carath\'eodory topology of $\Omega$, and we are done.
\end{proof}

A first simple consequence of Lemma~\ref{Lem:DW-point-Cara} is the following:

\begin{corollary}\label{Cor:simple-DW-impre}
Let $\Omega\subsetneq \C$ be a simply connected domain. Let $\underline{\zeta}\in\partial_C\Omega$ and assume that there exists $p\in\partial\Omega$  such that $I(\underline{\zeta})=\{p\}$. Let $f\in \hbox{Hol}_d(\Omega,\Omega)$ and let $\underline{\zeta}_f\in\partial_C\Omega$ be given by Lemma~\ref{Lem:DW-point-Cara}. If  $\underline{\zeta}_f=\underline{\zeta}$ then $f$ has Denjoy-Wolff point $p$. 
\end{corollary}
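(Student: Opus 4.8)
The plan is to conjugate $f$ to a self-map of the unit disc, recognize the prime end $\underline{\zeta}_f$ of Lemma~\ref{Lem:DW-point-Cara} as the image of the Denjoy-Wolff point of the conjugated map, and then squeeze out of the hypothesis $I(\underline{\zeta})=\{p\}$ precisely the boundary regularity of the Riemann map that is needed to turn convergence ``to a prime end'' into honest convergence to the point $p$.

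First I would fix a Riemann map $h:\D\to\Omega$ and put $g:=h^{-1}\circ f\circ h\in\hbox{Hol}_d(\D,\D)$, with $\tau\in\partial\D$ its Denjoy-Wolff point. Arguing exactly as in the proof of Lemma~\ref{Lem:DW-point-Cara}: the iterates $g^{\circ n}(w)$ tend to $\tau$ in $\overline{\D}$, hence to $\underline{\tau}$ in the Carath\'eodory topology of $\D$; applying the homeomorphism $\tilde h$ of the Carath\'eodory compactifications and using $\tilde h|_\D=h$ gives $f^{\circ n}(z)\to\hat h(\tau)$ in the Carath\'eodory topology of $\Omega$ for every $z\in\Omega$; so by the uniqueness statement in Lemma~\ref{Lem:DW-point-Cara}, $\underline{\zeta}_f=\hat h(\tau)$. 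Feeding in the hypothesis $\underline{\zeta}_f=\underline{\zeta}$, I obtain $I(\hat h(\tau))=I(\underline{\zeta})=\{p\}$.

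Next I would use the correspondence, recalled in Subsections~\ref{sec:basic definitions} and \ref{sec:limits}, that by Carath\'eodory's extension theorem the cluster set of $h$ at $\tau$ equals the impression of the corresponding prime end, i.e. $\Gamma(h,\tau)=I(\hat h(\tau))=\{p\}$; equivalently, $h$ has unrestricted limit $p$ at $\tau$. The conclusion is then a short compactness argument: for any compact $K\subset\Omega$ the set $h^{-1}(K)$ is compact in $\D$, on it $g^{\circ n}\to\tau$ uniformly by the Denjoy-Wolff theorem, and since $f^{\circ n}=h\circ g^{\circ n}\circ h^{-1}$ and $h$ has unrestricted limit $p$ at $\tau$, the images $f^{\circ n}(K)$ are eventually contained in any prescribed spherical neighbourhood of $p$ in $\C_\infty$ (a spherical rather than Euclidean neighbourhood is used in order to also cover the case $\Omega$ unbounded, $p=\infty$). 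Hence $f^{\circ n}\to p$ uniformly on compacta, and since $p\in\partial\Omega$ this says exactly that $f$ has Denjoy-Wolff point $p$.

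The delicate point---the one I would state explicitly---is that a Riemann map need not extend continuously to all of $\partial\D$, so one cannot simply ``evaluate $h$ at $\tau$''; what makes the argument go through is that only an unrestricted limit of $h$ at the \emph{single} point $\tau$ is required, and $I(\underline{\zeta})=\{p\}$ together with $\Gamma(h,\tau)=I(\hat h(\tau))$ delivers exactly that. As an alternative to the last paragraph one can bypass cluster sets: Lemma~\ref{Lem:DW-point-Cara} already yields $f^{\circ n}(z)\to p$ pointwise, since by the definition of impression every $\C_\infty$-subsequential limit of $\{f^{\circ n}(z)\}$ lies in $I(\underline{\zeta})=\{p\}$, and then Montel's theorem (each $f^{\circ n}$ omits the at least two points of $\C\setminus\Omega$) promotes pointwise convergence to locally uniform convergence.
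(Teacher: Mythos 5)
Your proof is correct and follows essentially the same route as the paper's: conjugate by a Riemann map $h$, identify the Denjoy-Wolff point of the conjugated map with the prime end $\underline{\zeta}$, use that the cluster set of $h$ at that boundary point equals the impression $I(\underline{\zeta})=\{p\}$ so that $h$ has unrestricted limit $p$ there, and push the disc convergence forward through $h$. The only difference is that you make explicit the uniform-on-compacta step (and offer a Montel alternative), which the paper leaves implicit after establishing pointwise convergence.
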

\begin{proof}
Let $h:\D\to\Omega$ be a Riemann map.  Let $g:=h^{-1}\circ f\circ h\in \hbox{Hol}_d(\D,\D)$. Since $\hat{h}:\overline{\D}^C\to \overline{\Omega}^C$ is a homeomorphism, it follows that $\{g^{\circ n}(z)\}$ converges to $\hat{h}^{-1}(\underline{\zeta})$ in the Carath\'eodory topology of $\D$ for every $z\in\D$. Thus, if $\sigma\in\partial\D$ is such that $\underline{\sigma}=\hat{h}^{-1}(\underline{\zeta})$, we have that $\{g^{\circ n}(z)\}$ converges to $\sigma$. Since $I(\hat{h}(\underline{\sigma}))=I(\underline{\zeta})=\{p\}$, it follows that $h$ has (unrestricted) limit $p$ at $\sigma$ (see Subsection~\ref{sec:limits}). But $f^{\circ n}(h(z))=h(g^{\circ n}(z))$, thus $\{f^{\circ n}(h(z))\}$ converges to $p$, and we are done.
\end{proof}

On the opposite direction we have: 
\begin{proposition}\label{Prop:no-DW}
Let $\Omega\subsetneq\C$ be a simply connected domain. Let $\underline{\zeta}\in\partial_C\Omega$ be such that $\Pi(\underline{\zeta}\}$ is not a singleton. Let $f\in \hbox{Hol}_d(\Omega,\Omega)$ and let $\underline{\zeta}_f\in\partial_C\Omega$ be given by Lemma~\ref{Lem:DW-point-Cara}. If  $\underline{\zeta}_f=\underline{\zeta}$ then $f$ does not have Denjoy-Wolff point.
\end{proposition}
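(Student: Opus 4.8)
The plan is to prove the contrapositive: if $f$ has a Denjoy-Wolff point $\tau\in\partial\Omega$, then the principal part of $\underline{\zeta}_f$ is the singleton $\{\tau\}$. Since by hypothesis $\underline{\zeta}_f=\underline{\zeta}$, this contradicts the assumption that $\Pi(\underline{\zeta})$ is not a singleton, and the proposition follows.

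First I would pass to the disc. Fix a Riemann map $h:\D\to\Omega$ and put $g:=h^{-1}\circ f\circ h\in\hbox{Hol}_d(\D,\D)$; let $\sigma\in\partial\D$ be its Denjoy-Wolff point. Exactly as in the proof of Lemma~\ref{Lem:DW-point-Cara}, $\hat h(\underline{\sigma})=\underline{\zeta}_f$, so (see Subsection~\ref{sec:limits}) $\Pi(\underline{\zeta}_f)=\Pi(\hat h(\underline{\sigma}))=\Gamma_N(h,\sigma)$. Hence it is enough to prove that $h$ has non-tangential limit $\tau$ at $\sigma$, that is, $\Gamma_N(h,\sigma)=\{\tau\}$.

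To produce a curve along which $h$ tends to $\tau$, I would use the orbit of $g$. Fix $w_0\in\D$; since $g$ is fixed-point-free, $w_0\neq g(w_0)$, and I let $\gamma_0:[0,1]\to\D$ be the geodesic segment joining $w_0$ to $g(w_0)$, of hyperbolic length $d:=k_\D(w_0,g(w_0))$. For $n\geq 0$ set $a_n:=g^{\circ n}(w_0)$ and $\gamma_n:=g^{\circ n}\circ\gamma_0$. Since $g$ contracts $k_\D$, the curve $\gamma_n$ runs from $a_n$ to $a_{n+1}$ and lies in the closed hyperbolic ball of radius $d$ about $a_n$; as $\gamma_n(1)=\gamma_{n+1}(0)$, concatenating the $\gamma_n$ gives a continuous curve $\Gamma:[0,+\infty)\to\D$. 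By the Denjoy-Wolff theorem $a_n\to\sigma$, and since hyperbolic balls of a fixed radius centered at points approaching $\partial\D$ have Euclidean diameter tending to $0$, we get $\Gamma(t)\to\sigma$ as $t\to+\infty$. On the other hand, $K_0:=h(\gamma_0([0,1]))$ is a compact subset of $\Omega$ and the restriction of $h\circ\Gamma$ to $[n,n+1]$ equals $f^{\circ n}\circ(h\circ\gamma_0)$; since $f^{\circ n}\to\tau$ uniformly on $K_0$, we conclude that $h(\Gamma(t))\to\tau$ as $t\to+\infty$. Thus $\tau$ is an asymptotic value of $h$ at $\sigma$; since $h$ is univalent, the Lehto--Virtanen theorem (see, e.g., \cite[Theorem~3.3.1]{BCDbook}) gives $\angle\lim_{z\to\sigma}h(z)=\tau$, so $\Gamma_N(h,\sigma)=\{\tau\}$, which is the required contradiction.

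The step requiring the most care is the construction and analysis of $\Gamma$: one must verify both that it genuinely lands at $\sigma$---which rests on the bounded-hyperbolic-length/shrinking-Euclidean-diameter estimate for the geodesic bridges $\gamma_n$, a consequence of Lemma~\ref{lem:hyp metric on simply connected} applied to $\D$---and that $h$ converges to $\tau$ along it, which uses only that the iterates $f^{\circ n}$ converge uniformly on the single compact set $K_0$. Everything else is bookkeeping, together with the invocation of Lehto--Virtanen and the identification of the principal part with the non-tangential cluster set.
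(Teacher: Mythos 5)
Your argument is correct, and its skeleton matches the paper's: both proofs reduce the statement to showing that the Riemann map $h$ has non-tangential limit at the point $\sigma\in\partial\D$ with $\hat h(\underline{\sigma})=\underline{\zeta}_f$, via an asymptotic curve built from the orbit together with the classical Lehto--Virtanen theorem, and then conclude through the identification $\Pi(\hat h(\underline{\sigma}))=\Gamma_N(h,\sigma)$. The difference is in how the curve is controlled. You connect consecutive orbit points $a_n=g^{\circ n}(w_0)$ by the images $g^{\circ n}\circ\gamma_0$ of a fixed geodesic segment, and you control $h$ along these bridges by invoking the \emph{uniform} convergence of $f^{\circ n}$ on the compact set $K_0=h(\gamma_0([0,1]))$ --- i.e.\ the full strength of the Denjoy--Wolff hypothesis in the contrapositive. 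The paper instead isolates a ``sequential Lehto--Virtanen'' lemma (Lemma~\ref{Lem:LV}): it interpolates the orbit by Euclidean segments, controls $h$ along them purely metrically via the distance lemma, and therefore needs only that a \emph{single} orbit $\{f^{\circ n}(h(z))\}$ converges to some boundary point. Consequently the paper's argument yields the slightly stronger conclusion that, when $\Pi(\underline{\zeta}_f)$ is not a singleton, no orbit of $f$ can converge at all, whereas your proof establishes exactly the stated proposition; in exchange, yours avoids both Lemma~\ref{Lem:LV} and the distance lemma, using only the elementary shrinking of hyperbolic balls near $\partial\D$ (Lemma~\ref{lem:hyp metric on simply connected}) to see that the concatenated curve lands at $\sigma$. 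Both routes are sound; if you want the sharper ``no convergent orbit'' statement, replace the uniform-convergence step by the hyperbolic-distance control of the bridges.
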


The proof is based on the following ``sequential'' Lehto-Virtanen's Theorem''  which was firstly established in \cite{ArBr} for mapping from a ball to a ball (see \cite[Theorem~1.5]{ArBr}):

\begin{lemma}[Sequential Lehto-Virtanen's Theorem for Riemann maps]\label{Lem:LV}
Let $h:\D\to \C$ be univalent and let $\sigma\in\partial\D$. Suppose $\{z_n\}\subset\D$ is a sequence converging to $\sigma$ such that there exists $C>0$ such that for al $n\in\N$
\[
k_\D(z_n,z_{n+1})\leq C.
\]
If there exists $p\in\C_\infty$ such that $\{h(z_n)\}$ converges to $p$ then $h$ has non-tangential limit $p$ at $\sigma$, that is, $\angle\lim_{z\to \sigma}h(z)=p$.
\end{lemma}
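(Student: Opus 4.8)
The plan is to deduce this from the classical curve version of Lehto--Virtanen's theorem (\cite[Theorem~3.3.1]{BCDbook}), which applies to univalent maps: if a univalent $h\colon\D\to\C$ has a limit along some curve landing at $\sigma$, then it has non-tangential limit at $\sigma$, equal to that limit. So it suffices to manufacture, out of the sequence $\{z_n\}$, a curve $\Gamma\colon[0,+\infty)\to\D$ that lands at $\sigma$ and along which $h\to p$. The natural candidate is the concatenation $\Gamma$ of the hyperbolic geodesic segments $\gamma_n$ joining $z_n$ to $z_{n+1}$, reparametrized so that $\gamma_n$ runs over $[n,n+1]$. Two things must be checked: that $\Gamma$ lands at $\sigma$, and that $h$ has limit $p$ along $\Gamma$.

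For the landing: since $\gamma_n$ is a geodesic from $z_n$ to $z_{n+1}$ and $k_\D(z_n,z_{n+1})\le C$, every point $w$ of $\gamma_n$ satisfies $k_\D(w,z_n)\le C$. A hyperbolic $C$-ball about $z_n$ is a Euclidean disc whose Euclidean diameter is at most a constant depending only on $C$ times $1-|z_n|$ (alternatively, use the distance lemma inside $\D$). Since $z_n\to\sigma$ and $1-|z_n|\to 0$, all points of $\gamma_n$ are within a distance tending to $0$ of $\sigma$, so $\Gamma(t)\to\sigma$ as $t\to+\infty$. Note that this says nothing about $\Gamma$ running near the radial segment $[0,\sigma)$: the $z_n$ may approach $\sigma$ tangentially, which is exactly why a final appeal to Lehto--Virtanen, rather than a direct estimate along a radius, is needed.

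For the limit of $h$ along $\Gamma$, I would argue by contradiction exactly as in the proof of Lemma~\ref{lemma:H-lim-univ}. If $h$ does not converge to $p$ along $\Gamma$, choose $t_k\to+\infty$ with $h(\Gamma(t_k))$ bounded away from $p$ in $\C_\infty$; passing to a subsequence, $h(\Gamma(t_k))\to q\ne p$ in $\C_\infty$. Put $n_k:=\lfloor t_k\rfloor\to\infty$, so that $\Gamma(t_k)$ lies on $\gamma_{n_k}$ and hence $k_\D(\Gamma(t_k),z_{n_k})\le C$; since $h$ is a hyperbolic isometry onto $\Omega:=h(\D)$, this gives $k_\Omega(h(\Gamma(t_k)),h(z_{n_k}))\le C$. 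Now $h(z_{n_k})\to p\in\partial\Omega$. If $q$ were in $\Omega$, then by completeness of $k_\Omega$ the set $\{h(z_{n_k})\}$, lying at bounded $k_\Omega$-distance from $q$, would be relatively compact in $\Omega$, contradicting $h(z_{n_k})\to p\in\partial\Omega$; hence $q\in\partial\Omega$. Then the ``distance lemma'' (\cite[Theorem~5.3.1]{BCDbook}), applied to the two sequences $\{h(z_{n_k})\}$ and $\{h(\Gamma(t_k))\}$, which stay at hyperbolic distance $\le C$ in $\Omega$ and converge in $\C_\infty$ to $p$ and $q$ respectively, forces $p=q$, a contradiction. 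Therefore $h(\Gamma(t))\to p$ as $t\to+\infty$.

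Combining the two checks with Lehto--Virtanen's theorem yields $\angle\lim_{z\to\sigma}h(z)=p$, which is the claim. I expect the delicate point to be the control of $h$ along the geodesic segments, i.e. the contradiction argument of the previous paragraph (and, implicitly, the fact that the distance lemma genuinely identifies the two boundary limits); the landing of $\Gamma$ and the invocation of Lehto--Virtanen are routine, and the whole scheme parallels the one already used to prove Lemma~\ref{lemma:H-lim-univ}.
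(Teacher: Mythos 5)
Your proof is correct and follows essentially the same route as the paper's: build a curve through the points $z_n$ landing at $\sigma$, use the isometry property of $h$ together with the distance lemma to show $h\to p$ along that curve, and conclude with the classical Lehto--Virtanen theorem. The only cosmetic difference is that you interpolate with hyperbolic geodesic segments (using additivity of $k_\D$ along geodesics), whereas the paper uses Euclidean segments and convexity of $k_\D$; both give the same bounded-hyperbolic-distance control.
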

\begin{proof}
Although stated n a different context, the result follows essentially from the proof of \cite[Theorem~1.5]{ArBr}, so we just sketch it here. Consider the continuous curve $\gamma:[0,+\infty)\to \D$ defined as 
\[
\gamma(s+n)=(1-s)z_n+sz_{n+1}, \quad n\in\N, s\in [0,1).
\]
We claim that $\lim_{t\to+\infty}\gamma(t)=\sigma$. Indeed, since $k_\D$ is convex, we have
\[
k_\D(\gamma(s+n), z_n)=k_\D((1-s)z_n+sz_{n+1}, z_n)\leq \max\{k_\D(z_n, z_n), k_\D(z_{n+1},z_n)\}\leq C.
\]
Hence, for every subsequence $\{t_k\}$ of positive real numbers converging to $\infty$, we have that $\{\gamma(t_k)\}$ stays at finite hyperbolic distance from $\{z_n\}_{n\geq n_0}$ for all $n_0\in\N$. The only possibility is that $\{\gamma(t_k)\}$ converges to $\sigma$.

Now, since $h$ is univalent, hence proper, it follows that $p\in\partial\Omega$. Since $h$ is an isometry between $k_\D$ and $k_\Omega$, it follows that for every subsequence $\{t_k\}$ of positive real numbers converging to $\infty$ the sequence $\{h(\gamma(t_k))\}$ stays at finite hyperbolic distance from $\{h(z_n)\}_{n\geq n_0}$ for all $n_0\in\N$. Since $\{h(z_n)\}$ converges to $p\in\partial\Omega$, by the so-called ``distance lemma'' (see for example \cite[Theorem~5.3.1]{BCDbook}), $\{h(\gamma(t_k))\}$ has to converge to $p$ as well. That is, $\lim_{t\to+\infty}h(\gamma(t))=p$. 

The result follows then from the classical Lehto-Virtanen's Theorem (see, {\sl e.g.}, \cite[Theorem~3.3.1]{BCDbook}).
\end{proof}

\begin{proof}[Proof of Proposition~\ref{Prop:no-DW}]
Let $h:\D\to\Omega$ be a Riemann map.  Let $f\in \hbox{Hol}_d(\Omega,\Omega)$ and let $\underline{\zeta}_f\in\partial_C\Omega$. Assume by contradiction that there exist $p\in\partial\Omega$ and $z\in\D$ such that $\{f^{\circ n}(h(z))\}$ converges to $p$.

Let $g:=h^{-1}\circ f\circ h\in \hbox{Hol}_d(\D,\D)$. Then  the sequence $\{g^{\circ n}(z)\}$ converges to $\underline{\sigma}:=\hat{h}^{-1}(\underline{\zeta})$ in the Carath\'eodory topology of $\D$, for some $\sigma\in\partial\D$. Thus, $\{g^{\circ n}(z)\}$ converges (in the Euclidean topology) to $\sigma$, since the identity map extends as a homeomorphism from the Carath\'eodory closure of $\D$ to the Euclidean closure of $\D$. But, for every $n\in\N$,
\[
k_\D(g^{\circ n}(z), g^{\circ (n+1)}(z))\leq C:=k_\D(z,g(z)).
\]
Since $h(g^{\circ n}(z))=f^{\circ n}(h(z))$, by Lemma~\ref{Lem:LV}, $h$ has non-tangential limit $p$ at $\sigma$. In particular, the non-tangential cluster set $\Gamma_N(h,\sigma)=\{p\}$.  Since $\hat{h}(\underline{\sigma})=\underline{\zeta}$ we have $\Pi(\underline{\zeta})=\Gamma_N(h,\sigma)=\{p\}$, a contradiction.
\end{proof}

We end this section with some positive results for a large class of holomorphic self-maps without fixed points.  Recall the definition of divergence rate $(cf)$ from Subsection~\ref{subsect:divergence}.

\begin{definition}
Let $\Omega\subsetneq \C$ be a simply connected domain and let $f\in \hbox{Hol}_d(\Omega,\Omega)$. We say that $f$ is {\sl hyperbolic} if $c(f)>0$.
\end{definition}

For hyperbolic maps, the condition about impressions of prime ends in Corollary~\ref{Cor:simple-DW-impre} can be weaken considering instead the principal parts:

\begin{proposition}\label{Prop:hyper-DW-princ}
Let $\Omega\subsetneq \C$ be a simply connected domain. Let $\underline{\zeta}\in\partial_C\Omega$ and assume that there exists $p\in\partial\Omega$  such that $\Pi(\underline{\zeta})=\{p\}$. Let $f\in \hbox{Hol}_d(\Omega,\Omega)$ be hyperbolic and let $\underline{\zeta}_f\in\partial_C\Omega$ be given by Lemma~\ref{Lem:DW-point-Cara}. If  $\underline{\zeta}_f=\underline{\zeta}$ then $f$ has Denjoy-Wolff point $p$. 
\end{proposition}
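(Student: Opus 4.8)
The plan is to reduce everything to the unit disc via a Riemann map $h:\D\to\Omega$ and use the hyperbolicity of $f$ to upgrade the non-tangential convergence coming from the sequential Lehto--Virtanen Lemma (Lemma~\ref{Lem:LV}) to genuine convergence of iterates. Set $g:=h^{-1}\circ f\circ h\in\hbox{Hol}_d(\D,\D)$, let $\sigma\in\partial\D$ be the point with $\underline{\sigma}=\hat h^{-1}(\underline\zeta)$, so that $g$ has Denjoy--Wolff point $\sigma$ and $\{g^{\circ n}(z)\}\to\sigma$ for every $z\in\D$. The hypothesis $\Pi(\underline\zeta)=\{p\}$ translates, via the identification $\Pi(\hat h(\underline\sigma))=\Gamma_N(h,\sigma)$ recalled in Subsection~\ref{sec:limits}, into $\angle\lim_{z\to\sigma}h(z)=p$. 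The goal is to show $\lim_{n\to\infty}f^{\circ n}(h(z))=\lim_{n\to\infty}h(g^{\circ n}(z))=p$ for each fixed $z\in\D$.

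The key point is that $f$ hyperbolic means $c(f)=c(g)>0$; by the classical theory of hyperbolic self-maps of $\D$ (the Denjoy--Wolff point being a boundary fixed point with $g'(\sigma)=e^{-2c(g)}<1$, or equivalently by conjugating to a half-plane where $g$ becomes roughly $w\mapsto \lambda w$ with $\lambda>1$ near the fixed point), the orbit $\{g^{\circ n}(z)\}$ converges to $\sigma$ \emph{non-tangentially} — in fact it is eventually contained in a Stolz angle at $\sigma$. This is where hyperbolicity is essential and is the main difference with the merely-continuous case of Corollary~\ref{Cor:simple-DW-impre}, where one needed the stronger hypothesis on the impression precisely because orbits may approach $\sigma$ tangentially or even fill a horocycle. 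Once we know the orbit enters non-tangentially, the existence of the non-tangential limit $\angle\lim_{z\to\sigma}h(z)=p$ immediately gives $h(g^{\circ n}(z))\to p$, which is exactly $f^{\circ n}(h(z))\to p$. To get convergence uniformly on compacta (and hence the Denjoy--Wolff point in the sense of the definition), one uses that all orbits enter a \emph{fixed} Stolz angle eventually together with a normal families / Montel argument, or invokes directly that $\{g^{\circ n}\}\to\sigma$ uniformly on compacta in $\D$ and that $h$ restricted to any fixed Stolz angle has limit $p$ at $\sigma$ uniformly there.

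The one technical step that needs care — and the expected main obstacle — is the assertion that the orbit of a hyperbolic map converges non-tangentially: strictly speaking this is the statement that for a hyperbolic $g\in\hbox{Hol}_d(\D,\D)$ with Denjoy--Wolff point $\sigma$, one has $|\sigma-g^{\circ n}(z)|\ge c\,(1-|g^{\circ n}(z)|)$ for some $c=c(z)>0$ and all $n$. This follows from Julia's Lemma applied iteratively: the orbit stays inside a fixed horocycle $E(\sigma,R_z)$, while hyperbolicity ($k_\D(z,g(z))$ being comparable to the true displacement and $c(g)>0$) forces $1-|g^{\circ n}(z)|\to 0$ at a definite exponential rate $e^{-2c(g)n}$; combining the horocyclic trapping $\frac{|\sigma-g^{\circ n}(z)|^2}{1-|g^{\circ n}(z)|^2}<R_z$ with a lower bound on $|\sigma - g^{\circ n}(z)|$ coming from the exponential decay of $1-|g^{\circ n}(z)|$ (which bounds $k_\D(0,g^{\circ n}(z))$ from above by $-\tfrac12\log(1-|g^{\circ n}(z)|)+O(1)$ and from below by $\sim c(g)n$) pins the orbit into a Stolz angle. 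Alternatively, and perhaps more cleanly, conjugate $g$ to a self-map $G$ of the right half-plane $\H$ fixing $\infty$; hyperbolicity gives $\Re G^{\circ n}(w_0)\to\infty$ linearly in $n$ while $|\Im G^{\circ n}(w_0)|$ stays bounded (again by Julia's Lemma, which confines the orbit to a fixed half-plane $\{\Re w>c\}$ translated so that it is a horodisc at $\infty$, forcing $\Im$ bounded), so $G^{\circ n}(w_0)$ converges to $\infty$ non-tangentially; translating back to $\D$ gives the claim. Granting this, the rest is the routine bookkeeping described above, and we conclude $f$ has Denjoy--Wolff point $p$.
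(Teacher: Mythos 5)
Your overall strategy is exactly the paper's: pass to $g=h^{-1}\circ f\circ h$, use $\Pi(\underline{\zeta})=\{p\}=\Gamma_N(h,\sigma)$ to get $\angle\lim_{z\to\sigma}h(z)=p$, and then reduce everything to the single claim that orbits of a hyperbolic self-map of $\D$ converge to the Denjoy--Wolff point non-tangentially. The paper settles that claim by citation: $c(g)=-\log\lambda_g$ (Arosio--Bracci), so $c(g)>0$ forces $\lambda_g\in(0,1)$, i.e.\ $g$ is hyperbolic in the classical sense, and then Cowen's Lemma~2.1 gives non-tangential convergence of $\{g^{\circ n}(z)\}$. So the fact you need is true and available; had you simply invoked it, your proof would coincide with the paper's.

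The gap is that both of the justifications you sketch for this key step are incorrect as written. In the half-plane model with Denjoy--Wolff point $\infty$, the horodiscs at $\infty$ are the half-planes $\{\Re w>c\}$, so Julia's Lemma only yields $\Re G(w)\geq \lambda^{-1}\Re w$ and says nothing about $\Im G^{\circ n}(w_0)$; the assertion that it forces $\Im$ bounded is false --- for $G(w)=2w+i$ one has $\Im G^{\circ n}(w)=2^{n}(\Im w+1)-1\to\infty$ although $G$ is hyperbolic (the orbit lies in a sector, not a strip). Likewise, in the disc picture the horocyclic trapping gives only $|\sigma-g^{\circ n}(z)|\leq \sqrt{R_z\,(1-|g^{\circ n}(z)|^2)}$, an upper bound of order $\sqrt{1-|g^{\circ n}(z)|}$, which is far weaker than the Stolz condition $|\sigma-g^{\circ n}(z)|\leq C(1-|g^{\circ n}(z)|)$, and combining it with a \emph{lower} bound on $|\sigma-g^{\circ n}(z)|$ cannot produce the needed upper bound. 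A correct elementary argument runs as follows: since $k_\H(G^{\circ n}(w),G^{\circ (n+1)}(w))\leq k_\H(w,G(w))$, hyperbolic balls of fixed radius in $\H$ give $|G^{\circ (n+1)}(w)-G^{\circ n}(w)|\leq C_1\,\Re G^{\circ n}(w)$, while Julia's Lemma gives $\Re G^{\circ k}(w)\leq \lambda^{\,n-k}\,\Re G^{\circ n}(w)$ for $k\leq n$; summing the increments yields $|\Im G^{\circ n}(w)|\leq |\Im w|+\tfrac{C_1\lambda}{1-\lambda}\,\Re G^{\circ n}(w)$, and since $\Re G^{\circ n}(w)\to\infty$ the orbit is eventually contained in a fixed sector at $\infty$, i.e.\ converges non-tangentially --- or one simply cites Cowen's lemma as the paper does. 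With that step repaired, the remainder of your argument (including uniformity on compacta, where the constants above depend only on $k_\D(z,g(z))$ and so are locally uniform) goes through and matches the paper.
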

\begin{proof}
Let $h:\D\to\Omega$ be a Riemann map and $\sigma\in\partial\D$ such that $\hat{h}(\underline{\sigma})=\underline{\zeta}$. By hypothesis, $\Pi(\underline{\zeta})=\{p\}$, hence the non-tangential cluster set of $h$ at $\sigma$ is $\{p\}$ and thus $h$ has non-tangential limit $p$ at $\sigma$. In particular, if $\{z_n\}\subset\D$ is any sequence converging to $\sigma$ non-tangentially, it follows that $\{h(z_n)\}$ converges to $p$. 

Let $g:=h^{-1}\circ f\circ h\in \hbox{Hol}_d(\D,\D)$. Clearly, $c(g)=c(f)>0$ since $h$ is an isometry for the hyperbolic distance. As before, since $\{f^{\circ n}(h(z))\}$ converges to $\underline{\zeta}$ in the Carath\'eodory topology of $\Omega$ for all $z\in \D$, it follows that $\{g^{\circ n}(z)\}$ converges to $\underline{\sigma}$ in the Carath\'eodory topology of $\D$ for all $z\in \D$ and hence $\{g^{\circ n}(z)\}$ converges to $\sigma$---that is, $\sigma$ is the Denjoy-Wolff point of $g$. 

If $\{g^{\circ n}(z)\}$ converges to $\sigma$ non-tangentially, since $g^{\circ n}(z)=h(f^{\circ n}(h(z)))$, it follows that $\{f^{\circ n}(h(z))\}$ converges to $p$, and we are done.

Therefore, we have to prove that $\{g^{\circ n}(z)\}$ converges to $\sigma$ non-tangentially for every $z\in\D$. By \cite[Proposition~5.8]{ArBr}, $c(g)=-\log \lambda_g$, where 
\[
\lambda_g:=\liminf_{z\to \sigma}\frac{1-|g(z)|}{1-|z|}=\angle\lim_{z\to\sigma}g'(z).
\]
 Since $c(g)>0$, it follows that $\lambda_g\in (0,1)$. Hence, $g$ is a hyperbolic self-map of $\D$ (in the classical sense). Therefore, $\{g^{\circ n}(z)\}$ converges to $\sigma$ non-tangentially for every $z\in\D$ by a result of Cowen \cite[Lemma~2.1]{Co}  (see also \cite[Proposition~1.8.7]{BCDbook}).
\end{proof}

As a corollary we have
\begin{corollary}\label{Cor:hyper-DW}
Let $\Omega\subsetneq \C$ be a simply connected domain such that the principal part of every prime end is a point. Then  every hyperbolic $f\in \hbox{Hol}_d(\Omega,\Omega)$ has Denjoy-Wolff point. 
\end{corollary}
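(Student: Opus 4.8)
The plan is to derive Corollary~\ref{Cor:hyper-DW} directly from Proposition~\ref{Prop:hyper-DW-princ} together with Lemma~\ref{Lem:DW-point-Cara}, which is essentially a matter of assembling the pieces already in place. First I would fix a simply connected domain $\Omega\subsetneq\C$ all of whose prime ends have singleton principal part, and fix an arbitrary hyperbolic $f\in\hbox{Hol}_d(\Omega,\Omega)$, i.e.\ one with $c(f)>0$.

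Next I would invoke Lemma~\ref{Lem:DW-point-Cara} to produce the prime end $\underline{\zeta}_f\in\partial_C\Omega$ with the property that for every $z\in\Omega$ the orbit $\{f^{\circ n}(z)\}$ converges to $\underline{\zeta}_f$ in the Carath\'eodory topology of $\Omega$. By the standing hypothesis on $\Omega$, the principal part $\Pi(\underline{\zeta}_f)$ is a single point, say $p\in\partial\Omega$. This is precisely the situation to which Proposition~\ref{Prop:hyper-DW-princ} applies: we have a prime end $\underline{\zeta}:=\underline{\zeta}_f$ with $\Pi(\underline{\zeta})=\{p\}$, a hyperbolic map $f$, and (tautologically) $\underline{\zeta}_f=\underline{\zeta}$. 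The conclusion of that proposition is exactly that $f$ has Denjoy-Wolff point $p$, which is what we want.

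Since $f$ was an arbitrary hyperbolic element of $\hbox{Hol}_d(\Omega,\Omega)$, this shows every such $f$ has a Denjoy-Wolff point, completing the proof. There is no real obstacle here: the only mild point to notice is that the hypothesis ``the principal part of every prime end is a point'' is used precisely to guarantee that the specific prime end $\underline{\zeta}_f$ attached to the given map by Lemma~\ref{Lem:DW-point-Cara} satisfies the hypothesis $\Pi(\underline{\zeta})=\{p\}$ of Proposition~\ref{Prop:hyper-DW-princ}; one does not need the full strength of the hypothesis for any single $f$, only for the one relevant prime end. Thus the corollary is a one-line deduction once Proposition~\ref{Prop:hyper-DW-princ} and Lemma~\ref{Lem:DW-point-Cara} are granted.
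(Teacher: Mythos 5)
Your proposal is correct and follows exactly the route the paper intends: the corollary is stated as an immediate consequence of Proposition~\ref{Prop:hyper-DW-princ}, applied to the prime end $\underline{\zeta}_f$ furnished by Lemma~\ref{Lem:DW-point-Cara}, whose principal part is a singleton by the hypothesis on $\Omega$. Nothing is missing.
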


\end{document}